\newtheorem{theorem}{Theorem}[section]
\newtheorem{proposition}[theorem]{Proposition}
\newtheorem{corollary}[theorem]{Corollary}
\newtheorem{remark}[theorem]{Remark}
\theoremstyle{definition}
\numberwithin{equation}{section}
\def\sD{{\mathfrak D}}      
   \def\sH{{\mathfrak H}}   
   \def\sK{{\mathfrak K}}   \def\sL{{\mathfrak L}}
\def\sM{{\mathfrak M}}   \def\sN{{\mathfrak N}}
      \def\dC{{\mathbb C}}
\def\dD{{\mathbb D}}
   \def\dN{{\mathbb N}}   
   \def\dT{{\mathbb T}}   
   \def\dZ{{\mathbb Z}}
\def\cA{{\mathcal A}}      
   \def\cH{{\mathcal H}}   
      \def\cL{{\mathcal L}}
\def\cM{{\mathcal M}}      
\def\cS{{\mathcal S}}      
\def\cV{{\mathcal V}}
\def\bL{{\mathbf L}}
\def\clos{{\rm clos\,}}
\def\wt{\widetilde}
\def\f{\varphi}
\def\bl{\bigl}
\def\uphar{{\upharpoonright\,}}
\def\ran{{\rm ran\,}}
\def\dom{{\rm dom\,}}
\def\cran{{\rm \overline{ran}\,}}
\def\cspan{{\rm \overline{span}\, }}
\begin{document}
\title
[Realizations of the Schur iterates]
{Iterates of the Schur class
operator-valued function and their conservative realizations}
\author{
Yury~Arlinski\u{i}}
\address{Department of Mathematical Analysis \\
East Ukrainian National University \\
Kvartal Molodyozhny 20-A \\
Lugansk 91034 \\
Ukraine} \email{yma@snu.edu.ua} \subjclass {47A48, 47A56, 93B28}

\keywords{Contraction, characteristic function, passive system,
conservative system, transfer function, realization, Schur class
function}
\begin{abstract}

Let $\mathfrak M$ and $\mathfrak N$ be separable Hilbert spaces and
let $\Theta(\lambda)$ be a function from the Schur class ${\bf
S}(\mathfrak M,\mathfrak N)$ of contractive functions holomorphic on
the unit disk. The operator generalization of the classical Schur
algorithm associates with $\Theta$ the sequence of contractions (the
Schur parameters of $\Theta$) $
 \Gamma_0=\Theta(0)\in \bL(\sM,\sN),\;
\Gamma_n\in\bL(\sD_{\Gamma_{n-1}},\sD_{\Gamma^*_{n-1}})
$
and the sequence of functions $\Theta_0=\Theta$, $\Theta_n\in {\bf
S}(\sD_{\Gamma_n},\sD_{\Gamma^*_n})$ $ n=1,\ldots$ (the Schur iterares of $\Theta$) connected by the
relations
\[
\Gamma_n=\Theta_n(0),\;
\Theta_n(\lambda)=\Gamma_n+\lambda D_{\Gamma^*_n}\Theta_{n+1}(\lambda)(I+\lambda\Gamma^*_n\Theta_{n+1}(\lambda))^{-1}D_{\Gamma_n},\;|\lambda|<1.
\]
The function $\Theta(\lambda)\in {\bf S}(\sM,\sN)$ can be realized as the
transfer function
\[
\Theta(\lambda)=D+\lambda C(I-\lambda A)^{-1}B
\]
 of a linear
conservative and simple discrete-time system
$\tau=\left\{\begin{bmatrix}D&C\cr B& A\end{bmatrix};
 \mathfrak M,
\mathfrak N,\mathfrak H\right\}$
with the state space $\mathfrak H$ and the input and output spaces
$\mathfrak M$ and $\mathfrak N $, respectively.

 In this paper we give a construction of conservative and
 simple realizations of the Schur iterates $\Theta_n$ by means of the
 conservative and simple realization of $\Theta$.

\end{abstract}
\maketitle
 \tableofcontents
\section{Introduction}
The Schur class ${\bf S}$ of scalar analytic functions and bounded
by one in the unit disc $\dD=\{\lambda\in\dC:|\lambda|<1\}$ plays a
prominent role in complex analysis and operator theory as well in
their applications in linear system theory and mathematical
engineering. Given a Schur function $f(\lambda)$, which is not a
finite Blaschke product, define inductively
\[
f_0(\lambda)=f(\lambda),\;
f_{n+1}(\lambda)=\frac{f_n(\lambda)-f_n(0)}{\lambda(1-\overline{f_n(0)}f_n(\lambda))},\;
n\ge 0.
\]
It is clear that  $\{f_n\}$ is an {\it infinite} sequence of Schur
functions called the \textit{$n-th$ Schur iterates} and neither of
its terms is a finite Blaschke product. The numbers
$\gamma_n:=f_n(0)$ are called the {\it Schur parameters:}
\[
\cS f=\{\gamma_0,\gamma_1,\ldots\}.
\]
Note that
\[
f_n(\lambda)=\frac{\gamma_n+\lambda
f_{n+1}(\lambda)}{1+\bar\gamma_n\lambda
f_{n+1}}=\gamma_n+(1-|\gamma_n|^2)\frac{\lambda
f_{n+1}(\lambda)}{1+\bar\gamma_n\lambda f_{n+1}(\lambda)},\; n\ge 0.
\]
The method of labeling $f\in{\bf S}$ by its Schur parameters is
known as the \textit{Schur algorithm} and is due to I.~Schur
\cite{Schur}. In the case when
\[
f(\lambda)=e^{i\f}\prod_{k=1}^N \frac{\lambda-\lambda_k}{1-\bar
\lambda_k \lambda}
\]
is a finite Blaschke product of order $N$, the Schur algorithm
terminates at the $N$-th step. The sequence of Schur parameters
$\{\gamma_n\}_{n=0}^N$ is finite, $|\gamma_n|<1$ for
$n=0,1,\ldots,N-1$, and $|\gamma_N|=1$.

The Schur algorithm for matrix valued Schur class functions  has
been considered in the paper of Delsarte, Genin, and Kamp \cite{DGK}
and in the book of Dubovoj, Fritzsche, and Kirstein \cite{DFK}. An
operator extension of the Schur algorithm was developed by
T.~Constantinescu in \cite{Const} and with numerous applications is
presented in the book of Bakonyi and Constantinescu \cite{BC}.

In what follows the class of all continuous linear operators defined
on a complex Hilbert space $\sH_1$ and taking values in a complex
Hilbert space $\sH_2$ is denoted by $\bL(\sH_1,\sH_2)$ and
${\bL}(\sH):= {\bL}(\sH,\sH)$. The domain, the range, and the
null-space of a linear operator $T$ are denoted by $\dom T$, $\ran
T$, and $\ker T$, respectively. The set of all regular points of a
closed operator $T$ is denoted by $\rho(T)$. We denote by $I_\cH$
the identity operator in a Hilbert space $\cH$ and by $P_\cL$ the
orthogonal projection onto the subspace (the closed linear manifold)
$\cL$. The notation $T\uphar \cL$ means the restriction of a linear
operator $T$ on the set $\cL$. The positive integers will be denoted
by $\dN$. An operator $T\in\bL(\sH_1,\sH_2)$ is said to be
\begin{enumerate}\def\labelenumi{\rm (\alph{enumi})}
\item \textit{contractive} if $\|T\|\le 1$;

\item \textit{isometric} if $\|Tf\|=\|f\|$ for all $f\in \sH_1$
$\iff T^*T=I_{\sH_1}$;

\item \textit{co-isometric} if $T^*$ is isometric $\iff
TT^*=I_{\sH_2}$;
\item \textit{unitary} if it is both isometric and co-isometric.
\end{enumerate}
Given a contraction $T\in \bL(\sH_1,\sH_2)$. The operators
\[
D_T:=(I-T^*T)^{1/2},\qquad D_{T^*}:=(I-TT^*)^{1/2}
\]
are called the \textit{defect operators} of $T$, and the subspaces
$\sD_T=\cran D_T,$ $\sD_{T^*}=\cran D_{T^*}$ the \textit{defect
subspaces} of $T$. The dimensions $\dim\sD_T,$ $\dim\sD_{T^*}$ are
known as the \textit{defect numbers} of $T$. The defect operators
satisfy the following intertwining relations
\begin{equation}
\label{defect} TD_{T}=D_{T^*}T,\qquad T^*D_{T^*}=D_{T}T^*.
\end{equation}
It follows from \eqref{defect} that $T\sD_T\subset\sD_{T^*}$,
$T^*\sD_{T^*}\subset\sD_T$, and $T(\ker D_T)=\ker D_{T^*},$
$T^*(\ker D_{T^*})=\ker D_{T}$. Moreover, the operators $T\uphar\ker
D_{T}$ and $T^*\uphar\ker D_{T^*}$ are isometries and $T\uphar\sD_T$
and $T^*\uphar\sD_{T^*}$ are \textit{pure} contractions, i.e.,
$||Tf||<||f||$ for $f\in\sH\setminus\{0\}$.

The \textit{Schur class} ${\bf S}(\sH_1,\sH_2)$ is the set of all
function $\Theta(\lambda)$ analytic on the unit disk $\dD$ with
values in $\bL(\sH_1,\sH_2)$ and such that $\|\Theta(\lambda)\|\le
1$ for all $\lambda\in\dD$. The following theorem takes place.
\begin{theorem}
\label{MO}\cite{Const}, \cite{BC}. Let $\sM$ and $\sN$ be separable
Hilbert spaces and let the function $\Theta(\lambda)$ be from the
Schur class ${\bf S}(\sM,\sN).$ Then there exists a function
$Z(\lambda)$ from the Schur class ${\bf
S}(\sD_{\Theta(0)},\sD_{\Theta^*(0)})$ such that
\begin{equation}
\label{MREP}
\Theta(\lambda)=\Theta(0)+D_{\Theta^*(0)}Z(\lambda)(I+\Theta^*(0)Z(\lambda))^{-1}D_{\Theta(0)},\;\lambda\in\dD.
\end{equation}
\end{theorem}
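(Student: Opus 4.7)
Setting $\Gamma := \Theta(0)$, the plan is to construct $Z(\lambda)$ explicitly as an operator Möbius inverse of $\Theta(\lambda)$ with respect to $\Gamma$, and to verify that it lies in ${\bf S}(\sD_\Gamma,\sD_{\Gamma^*})$ and satisfies \eqref{MREP}.

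The first step is a rigidity lemma on the isometric subspaces. For $f \in \ker D_\Gamma$ one has $\|\Gamma f\| = \|f\|$, so the subharmonic function $\|\Theta(\lambda) f\|^2$ attains its maximum on $\dD$ at $\lambda = 0$ and is therefore constant by the maximum principle. The maximum modulus principle applied to the bounded holomorphic scalar $\langle \Theta(\lambda) f, \Gamma f\rangle$, which already equals $\|f\|^2$ (the Cauchy--Schwarz bound) at $\lambda = 0$, then forces $\Theta(\lambda) f = \Gamma f$ for every $\lambda \in \dD$. Dually, $\Theta(\lambda)^* g = \Gamma^* g$ for every $g \in \ker D_{\Gamma^*}$. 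Consequently $\Theta(\lambda) - \Gamma$ annihilates $\ker D_\Gamma$ and takes values in $\sD_{\Gamma^*}$, so the entire construction may be carried out on the defect subspaces, on which $D_\Gamma$ and $D_{\Gamma^*}$ are injective.

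The second step is a Schwarz--Pick-type factorization: there exists a holomorphic $F:\dD\to\bL(\sD_\Gamma,\sD_{\Gamma^*})$ with
\[
\Theta(\lambda)-\Gamma = D_{\Gamma^*}\,F(\lambda)\,D_\Gamma.
\]
This is obtained from the positivity of the kernel $(1-\bar\mu\lambda)^{-1}(I-\Theta(\mu)^*\Theta(\lambda))$ combined with a Schur-complement/factorization argument applied at $(\mu,\lambda)=(0,\lambda)$, which expresses $I-\Gamma^*\Theta(\lambda)$ through $D_\Gamma$ on both sides and then yields $F(\lambda)$ on the defect subspaces; holomorphy follows from the uniqueness of the factorization there. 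I would then define $Z(\lambda):=(I-F(\lambda)\Gamma^*)^{-1}F(\lambda)$, with invertibility of $I-F(\lambda)\Gamma^*$ on $\sD_{\Gamma^*}$ established by a kernel argument that uses the rigidity of Step 1 together with the fact that $\Gamma^*\uphar\sD_{\Gamma^*}$ is a pure contraction. A direct algebraic manipulation then gives $F=Z(I+\Gamma^*Z)^{-1}$, and substituting into the factorization reproduces identity \eqref{MREP}.

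Finally, the contractivity $I-Z(\lambda)^*Z(\lambda)\geq 0$ is obtained by expanding with the definition of $Z$ and sandwiching by $(I+\Gamma^*Z(\lambda))^{-*}$ and $(I+\Gamma^*Z(\lambda))^{-1}$; after invoking $D_\Gamma^2=I-\Gamma^*\Gamma$, $D_{\Gamma^*}^2=I-\Gamma\Gamma^*$, the intertwining $\Gamma D_\Gamma=D_{\Gamma^*}\Gamma$, and the identity $D_{\Gamma^*}F D_\Gamma=\Theta-\Gamma$, the resulting expression collapses to a nonnegative multiple of the Schur-class inequality $I-\Theta(\lambda)^*\Theta(\lambda)\geq 0$, which holds by hypothesis on $\Theta$. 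Holomorphy of $Z$ is immediate from its definition. The main technical obstacle is Step 2: producing the bounded holomorphic factor $F(\lambda)$ and verifying invertibility of $I-F(\lambda)\Gamma^*$ without assuming that $\Gamma$ is a strict contraction. The rigidity lemma is essential here, as it confines the whole argument to $\sD_\Gamma$ and $\sD_{\Gamma^*}$, where $D_\Gamma$ and $D_{\Gamma^*}$ are injective and the required cancellations can be made rigorous.
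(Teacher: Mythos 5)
The paper does not prove this theorem from first principles: it is obtained (see the paragraph following Theorem \ref{SHMUL1}) by combining Shmul'yan's Theorem \ref{SHMUL1}, which supplies the two-sided factorization $\Theta(\lambda)=\Theta(0)+D_{\Theta^*(0)}\Psi(\lambda)D_{\Theta(0)}$ with a holomorphic $\bL(\sD_{\Theta(0)},\sD_{\Theta^*(0)})$-valued $\Psi$, with Shmul'yan's Theorem \ref{SHMUL}, which converts $\Psi$ into the M\"obius parameter and asserts both the required invertibility and the contractivity. Your outline follows exactly this strategy, and two of its pieces are sound: Step 1 (the rigidity $\Theta(\lambda)f=\Theta(0)f$ for $f\in\ker D_{\Theta(0)}$, via the maximum modulus principle applied to $\langle\Theta(\lambda)f,\Theta(0)f\rangle$) is correct, and your final contractivity computation is also correct once everything else is in place, since the identity $\|u\|^2-\|Z(\lambda)u\|^2=\|f\|^2-\|\Theta(\lambda)f\|^2$ for $u=(I+\Theta^*(0)Z(\lambda))^{-1}D_{\Theta(0)}f$ gives $\|Z(\lambda)u\|\le\|u\|$ on a dense subset of $\sD_{\Theta(0)}$.

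The gaps sit exactly at the two points that constitute the content of the cited Shmul'yan theorems. First, Step 2: the $2\times 2$ positivity of the kernel $(1-\bar\mu\lambda)^{-1}(I-\Theta^*(\mu)\Theta(\lambda))$ at $(\mu,\lambda)=(0,\lambda)$ yields, via Douglas factorization, $I-\Theta^*(0)\Theta(\lambda)=D_{\Theta(0)}\Lambda(\lambda)\,(1-|\lambda|^2)^{-1/2}D_{\Theta(\lambda)}$ --- a factorization with $D_{\Theta(0)}$ on one side and $D_{\Theta(\lambda)}$ on the other, not ``$D_{\Theta(0)}$ on both sides.'' From this kernel and its dual you obtain the one-sided factorizations $\Theta(\lambda)-\Theta(0)=V(\lambda)D_{\Theta(0)}=D_{\Theta^*(0)}U(\lambda)$, but these cannot be merged into $D_{\Theta^*(0)}F(\lambda)D_{\Theta(0)}$ with $F(\lambda)$ bounded, because $\ran D_{\Theta(0)}$ and $\ran D_{\Theta^*(0)}$ need not be closed; what is actually needed is the bilinear estimate $|\langle(\Theta(\lambda)-\Theta(0))f,g\rangle|\le C(\lambda)\|D_{\Theta(0)}f\|\,\|D_{\Theta^*(0)}g\|$, which your sketch does not produce. (A route consistent with the paper's own toolkit: realize $\Theta$ as the transfer function of a conservative system and apply Theorem \ref{ParContr}, so that $B=FD_D$ and $C=D_{D^*}G$ give $\Theta(\lambda)-\Theta(0)=\lambda D_{D^*}G(I-\lambda A)^{-1}FD_D$ at once, with an explicit bounded holomorphic middle factor.) Second, the invertibility of $I-F(\lambda)\Theta^*(0)$ is not a routine kernel argument: both factors may have norm one, and for a general contraction $X\in\bL(\sD_{\Theta(0)},\sD_{\Theta^*(0)})$ the operator $I-X\Theta^*(0)$ can fail to be boundedly invertible. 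Its invertibility here is a consequence of the contractivity of $\Theta(0)+D_{\Theta^*(0)}F(\lambda)D_{\Theta(0)}$ --- this is precisely the nontrivial assertion $X\in\cV_{T^*}$ in Theorem \ref{SHMUL} --- and your one-clause appeal to rigidity plus purity of $\Theta^*(0)\uphar\sD_{\Theta^*(0)}$ does not supply a proof of it.
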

In what follows we will call the representation \eqref{MREP} of a
function $\Theta(\lambda)$ from the Schur class \textit{the M\"obius
representation} of $\Theta(\lambda)$ and the function $Z(\lambda)$
we will call \textit{the M\"obius parameter} of $\Theta(\lambda)$.
Clearly, $Z(0)=0$ and by Schwartz's lemma we obtain that
\[
||Z(\lambda)||\le|\lambda|,\;\lambda\in\dD.
\]

\textit{The operator Schur's algorithm} \cite{BC}. Fix
$\Theta(\lambda)\in{\bf S}(\sM,\sN)$, put
$\Theta_0(\lambda)=\Theta(\lambda)$ and let $Z_0(\lambda)$ be the
M\"obius parameter of $\Theta$. Define
\[
\Gamma_0=\Theta(0),\;
\Theta_1(\lambda)=\frac{Z_0(\lambda)}{\lambda}\in {\bf
S}(\sD_{\Gamma_0},\sD_{\Gamma^*_0}),\;\Gamma_1= \Theta_1(0)=Z'_0(0).
\]
If $\Theta_0(\lambda),\ldots,\Theta_n(\lambda)$ and
$\Gamma_0,\ldots, \Gamma_n$ have been chosen, then let
$Z_{n+1}(\lambda)\in {\bf S}(\sD_{\Gamma_n},\sD_{\Gamma^*_n})$ be
the M\"obius parameter of $\Theta_n$. Put
\[
\Theta_{n+1}(\lambda)=\frac{Z_{n+1}(\lambda)}{\lambda},\;
\Gamma_{n+1}=\Theta_{n+1}(0).
\]
 The contractions $\Gamma_0\in\bL(\sM,\sN),$
$\Gamma_n\in\bL(\sD_{\Gamma_{n-1}},\sD_{\Gamma^*_{n-1}})$,
$n=1,2,\ldots$ are called the \textit{Schur parameters} of
$\Theta(\lambda)$ and the function $\Theta_n(\lambda) \in {\bf
S}(\sD_{\Gamma_{n-1}},\sD_{\Gamma^*_{n-1}})$ we will call the $n-th$
\textit{Schur iterate} of  $\Theta(\lambda)$.

Formally we have
\[
\Theta_{n+1}(\lambda)\uphar\ran
D_{\Gamma_n}=\frac{1}{\lambda}D_{\Gamma^*_n}(I_{\sD_{\Gamma^*_n}}-\Theta_n(\lambda)\Gamma^*_n)^{-1}
(\Theta_n(\lambda)-\Gamma_n)D^{-1}_{\Gamma_n}\uphar\ran
D_{\Gamma_n}.
\]
Clearly, the sequence of Schur parameters $\{\Gamma_n\}$ is infinite
if and only if the operators $\Gamma_n$ are non-unitary. The
sequence of Schur parameters consists of finite number operators
$\Gamma_0,$ $\Gamma_1,\ldots, \Gamma_N$ if and only if
$\Gamma_N\in\bL(\sD_{\Gamma_{N-1}},\sD_{\Gamma^*_{N-1}})$ is
unitary. If $\Gamma_N$ is isometric (co-isometric) then $\Gamma_n=0$
for all $n>N$.

The following theorem is the operator generalization of Schur's result.
\begin{theorem} \label{SchurAlg}\cite{Const}, \cite{BC}. There is a one-to-one
correspondence between the Schur class functions ${\bf S}(\sM,\sN)$
and the set of all sequences of contractions $\{\Gamma_n\}_{n\ge 0}$
such that
\begin{equation}
\label{CHSEQ} \Gamma_0\in\bL(\sM,\sN),\;\Gamma_n\in
\bL(\sD_{\Gamma_{n-1}},\sD_{\Gamma^*_{n-1}}),\; n\ge 1.
\end{equation}
\end{theorem}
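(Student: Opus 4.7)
\emph{Proof plan.} The map $\Theta\mapsto\{\Gamma_n\}$ is defined by the operator Schur algorithm recalled just before the theorem, so I need to verify it is both injective and surjective. Injectivity is a short consequence of Theorem~\ref{MO} together with the inversion formula \eqref{MREP}; surjectivity, that every admissible sequence of contractions really arises, carries the substantive content.

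\textbf{Injectivity.} By Theorem~\ref{MO} the M\"obius parameter $Z_n$ of each Schur iterate $\Theta_n$ is uniquely determined, and formula \eqref{MREP} reconstructs $\Theta_n$ from $\Gamma_n=\Theta_n(0)$ and $Z_n(\lambda)=\lambda\Theta_{n+1}(\lambda)$. Expanding \eqref{MREP} as a power series in $\lambda$ and observing that the correction term contributes only from order $\lambda$ upward, one checks inductively that the Taylor coefficient of $\Theta=\Theta_0$ at the origin of order $m$ depends only on $\Gamma_0,\ldots,\Gamma_m$. Consequently the full sequence $\{\Gamma_n\}_{n\ge 0}$ determines $\Theta$ uniquely.

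\textbf{Surjectivity.} The key auxiliary ingredient is the \emph{inverse M\"obius lemma}: for every contraction $\Gamma\in\bL(\sH_1,\sH_2)$ and every $Z\in{\bf S}(\sD_\Gamma,\sD_{\Gamma^*})$ the function
\[
F(\lambda):=\Gamma+D_{\Gamma^*}Z(\lambda)(I+\Gamma^*Z(\lambda))^{-1}D_\Gamma
\]
belongs to ${\bf S}(\sH_1,\sH_2)$. Invertibility of $I+\Gamma^*Z(\lambda)$ on $\dD$ follows from Schwarz's lemma (so $\|Z(\lambda)\|\le|\lambda|<1$), and the contractivity $I-F(\lambda)^*F(\lambda)\ge 0$ is a direct algebraic check using the intertwining identities \eqref{defect}. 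Granted this, fix an admissible sequence $\{\Gamma_n\}_{n\ge 0}$ and construct $\Theta$ by truncated backward recursion: for each $N\in\dN$ set $\Theta^{(N)}_N(\lambda)\equiv 0$ and, for $n=N-1,N-2,\ldots,0$, define
\[
\Theta^{(N)}_n(\lambda):=\Gamma_n+\lambda D_{\Gamma^*_n}\Theta^{(N)}_{n+1}(\lambda)(I+\lambda\Gamma^*_n\Theta^{(N)}_{n+1}(\lambda))^{-1}D_{\Gamma_n}.
\]
Inductive application of the inverse M\"obius lemma with $Z(\lambda)=\lambda\Theta^{(N)}_{n+1}(\lambda)$ keeps each $\Theta^{(N)}_n$ in the appropriate Schur class. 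If the prescribed sequence terminates at a unitary $\Gamma_N$, we instead initialize with $\Theta_N(\lambda)\equiv\Gamma_N$ and obtain $\Theta$ in finitely many steps without any limiting argument.

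\textbf{Convergence and verification.} The same expansion-in-$\lambda$ argument used for injectivity shows that the $m$-th Taylor coefficient of $\Theta^{(N)}_0$ is independent of $N$ once $N\ge m$, so these coefficients stabilize and converge. Since $\{\Theta^{(N)}_0\}_N$ is a uniformly bounded family of holomorphic $\bL(\sM,\sN)$-valued functions on $\dD$, termwise Taylor convergence upgrades to uniform convergence on compact subsets, and the limit $\Theta$ inherits membership in ${\bf S}(\sM,\sN)$. Running the forward Schur algorithm on $\Theta$ then reproduces the prescribed sequence: $\Theta(0)=\Gamma_0$ is immediate, and inductively the uniqueness of the M\"obius parameter together with continuity of the iterate operation identifies each $\Theta_n$ with $\lim_N\Theta^{(N)}_n$, whose value at the origin is $\Gamma_n$ by construction. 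The main obstacle I anticipate is not any single estimate but the careful bookkeeping of the nested defect spaces $\cdots\subset\sD_{\Gamma_n}\subset\sD_{\Gamma_{n-1}}\subset\cdots\subset\sM$ through the recursion, to ensure the inverse M\"obius lemma is always invoked with inputs and outputs living in the correct spaces.
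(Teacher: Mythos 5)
The paper offers no proof of this theorem: it is quoted from \cite{Const} and \cite{BC}, so there is no internal argument to compare against. Your proof is essentially the standard one from those references (backward truncated recursion plus a normal-families/coefficient-stabilization limit), and I find it correct in all essentials: the injectivity argument via the observation that the $m$-th Taylor coefficient of $\Theta$ depends only on $\Gamma_0,\dots,\Gamma_m$ is sound; the surjectivity construction is the right one; and the Schwarz-lemma estimate $\|\Theta_0^{(N)}(\lambda)-\Theta_0^{(M)}(\lambda)\|\le 2|\lambda|^{\min(N,M)}$ (applied to scalarizations $\langle\,\cdot\,x,y\rangle$) does upgrade coefficient stabilization to local uniform convergence, after which the limit passes through the recursion and the uniqueness of the M\"obius parameter (Theorem~\ref{SHMUL}, via density of $\ran D_{\Gamma_n}$ in $\sD_{\Gamma_n}$) identifies the Schur parameters of the limit. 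Two small points deserve tightening. First, your ``inverse M\"obius lemma'' as stated for an arbitrary $Z\in{\bf S}(\sD_\Gamma,\sD_{\Gamma^*})$ cannot invoke Schwarz's lemma, since $Z(0)$ need not vanish and $I+\Gamma^*Z(\lambda)$ can then fail to be invertible; you should state the lemma for $Z(\lambda)=\lambda W(\lambda)$ with $W$ in the Schur class (which is all you ever use), where $\|Z(\lambda)\|\le|\lambda|<1$ holds trivially and the contractivity of $F(\lambda)$ is exactly formula \eqref{DEFECT11} of Theorem~\ref{SHMUL}. Second, the stabilization threshold is $N>m$ rather than $N\ge m$, since the $m$-th coefficient of $\Theta_0^{(N)}$ sees $\Theta_m^{(N)}(0)$, which equals $\Gamma_m$ only when $m<N$; this is harmless but worth stating correctly.
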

Notice that a sequence of
contractions of the form \eqref{CHSEQ} is called the \textit{choice
sequence} \cite{CF}.

It is known \cite{Br1}, \cite{A} that every $\Theta(\lambda)\in {\bf
S}(\sM,\sN)$ can be realized as the transfer function
\[
\Theta(\lambda)=D+\lambda C(I_\sH-\lambda A)^{-1}B
\]
of a linear conservative and simple discrete-time system (see
Section \ref{secS})
\[
\tau=\left\{\begin{bmatrix}D&C\cr B&
A\end{bmatrix};\sM,\sN,\sH\right\}
\]
with the state space $\sH$ and input and output spaces $\sM$ and
$\sN$, respectively. In this paper we study the problem of the
conservative realizations of the Schur iterates of the function
$\Theta(\lambda)\in{\bf S}(\sM,\sN)$ by means of the the
conservative realization of $\Theta$.

In this connection it should be pointed out that the similar problem
for a scalar generalized Schur class function has been studied in
papers \cite{AADL1}, \cite{AADLW1}, \cite{AADLW2}, \cite{ADL}.

Here we describe our main results. Let $A$ be a completely
non-unitary contraction \cite{SF} in a separable Hilbert space
$\sH$. Define the subspaces and operators
\[
\begin{array}{l}
\sH_{m,0}=\ker D_{A^m},\;\sH_{0,l}=\ker D_{A^{*l}},\\
\sH_{m,l}=\ker D_{A^{m}}\cap\ker D_{A^{*l}},\; m,l\in\dN,\\
A_{m,l}=P_{m,l}A\uphar\sH_{m,l},
\end{array}
\]
where $P_{m,l}$ is the orthogonal projection in $\sH$ onto
$\sH_{m,l}$.

We prove that

1) if $A$ is a completely non-unitary contraction in a Hilbert space
then for every $n\in\dN$ the operators
\[
A_{n,0},\;A_{n-1,1},\ldots, A_{0,n}
\]
are unitary equivalent completely non-unitary contractions and their
Sz.-Nagy-- Foias characteristic functions \cite{SF} coincide with
the pure contractive part \cite{SF}, \cite{BC} for the $n$-th Schur
iterate $\Phi_n(\lambda)$ of the characteristic function
$\Phi(\lambda)$ of $A$;

2) if $\Theta(\lambda)\in {\bf S}(\sM,\sN)$ is the transfer function
of a simple conservative system
\[
\tau=\left\{\begin{bmatrix}\Gamma_0&C\cr B&
A\end{bmatrix};\sM,\sN,\sH\right\}
\]
then the Schur parameters of $\Theta$ take the form
\[
\begin{array}{l}
\Gamma_1=D^{-1}_{\Gamma^*_0}C\left(D^{-1}_{\Gamma_0}B^*\right)^*,�\;
\Gamma_2=D^{-1}_{\Gamma^*_1}D^{-1}_{\Gamma^*_0}CA
\left(D^{-1}_{\Gamma_1}D^{-1}_{\Gamma_0}\left(B^*\uphar\sH_{1,0}\right)\right)^*,\ldots,\\
\Gamma_n=D^{-1}_{\Gamma^*_{n-1}}\cdots
D^{-1}_{\Gamma^*_0}CA^{n-1}\left(D^{-1}_{\Gamma_{n-1}}\cdots
D^{-1}_{\Gamma_0}\left(B^*\uphar\sH_{n-1,0}\right)\right)^*,\ldots,
\end{array}
\]
and the $n$-th Schur iterate $\Theta_n(\lambda)$ of $\Theta$ is the
transfer function of the simple conservative and unitarily
equivalent systems
\[
\tau^{(k)}_{n}=\left\{\begin{bmatrix}\Gamma_n&D^{-1}_{\Gamma^*_{n-1}}
\cdots D^{-1}_{\Gamma^*_{0}}(CA^{n-k})\cr
A^k\left(D^{-1}_{\Gamma_{n-1}}\cdots D^{-1}_{\Gamma_{0}}
\left(B^*\uphar\sH_{n,0}\right)\right)^*&A_{n-k,k}\end{bmatrix};
\sD_{\Gamma_{n-1}},\sD_{\Gamma^*_{n-1}}, \sH_{n-k,k}\right\}
\]
 for $k=0,\ldots, n$.
Here $D^{-1}_{\Gamma_m}$ and $D^{-1}_{\Gamma^*_m}$ are the Moore--
Penrose pseudo-inverses.  For a completely non-unitary contraction
$A$ with rank one defect operators it was proved in \cite{AGT} that
the characteristic functions of the operators $A_{1,0}=P_{\ker
D_A}A\uphar\ker D_A$ and $A_{0,1}=P_{\ker D_{A^*}}A\uphar\ker
D_{A^*}$ coincide with the first Schur iterate of the characteristic
function of $A$. This result has been established using the model of
$A$ given by a truncated CMV matrix. Here we use another approach
based on the parametrization of a contractive block-operator matrix
\[
T=\begin{bmatrix} D&C \cr B&A\end{bmatrix} :
\begin{array}{l} \sM \\\oplus\\ \sH \end{array} \to
\begin{array}{l} \sN \\\oplus\\ \sK \end{array}
\]
established in  \cite{AG}, \cite{DaKaWe}, \cite {ShYa}, and the
construction of the passive realization for the M\"obius parameter
of $\Theta(\lambda)$ obtained in \cite{ARL1} by means of a passive
realization of $\Theta$.
\section{Completely non-unitary contractions}

Let $S$ be an isometry in a separable Hilbert space $H$. A subspace
$\Omega$ in $H$ is called wandering for $V$ if $S^p\Omega\perp
S^q\Omega$ for all $p,q\in \dZ_+$, $p\ne q$. Since $S$ is an
isometry, the latter is equivalent to $S^n\Omega\perp \Omega$ for
all $n\in\dN$. If $H=\sum_{n=0}^\infty\oplus S^n\Omega$ then $S$ is
called a \textit{unilateral shift} and $\Omega$ is called the
generating subspace. The dimension of $\Omega$ is called the
multiplicity of the unilateral shift $S$. It is well known
\cite[Theorem I.1.1]{SF} that $S$ is a unilateral shift if and only
if $\bigcap_{n=0}^\infty S^n H=\{0\}.$ Clearly, if an isometry $V$
is the unilateral shift in $H$ then $\Omega=H\ominus SH$ is the
generating subspace for $S$. An operator is  called
\textit{co-shift} if its adjoint is a unilateral shift.

 A contraction $A$ acting in a Hilbert space $\sH$ is
called \textit{completely non-unitary} if there is no nontrivial
reducing subspace of $A$, on which $A$ generates a unitary operator.
Given a contraction $A$ in $\sH$ then there is a canonical
orthogonal decomposition \cite [Theorem I.3.2]{SF}
\[
\sH=\sH_0\oplus \sH_1, \qquad A=A_0\oplus A_1, \quad A_j=A\uphar
\sH_j, \quad j=0,1,
\]
where $\sH_0$ and $\sH_1$ reduce $A$, the operator $A_0$ is a
completely non-unitary contraction, and $A_1$ is a unitary operator.
Moreover,
\[
\sH_1= \left(\bigcap\limits_{n\ge 1}\ker
D_{A^n}\right)\bigcap\left(\bigcap\limits_{n\ge 1}\ker
D_{A^{*n}}\right).
\]
Since
\[
\bigcap\limits_{k=0}^{n-1} \ker(D_{A}A^{k})=\ker D_{A^{n}},\;
\bigcap\limits_{k=0}^{n-1} \ker(D_{A^*}A^{*k})=\ker D_{A^{*n}},
\]
we get
\begin{equation}
\label{perp}
\begin{split}
&\bigcap\limits_{n\ge 1}\ker
D_{A^n}=\sH\ominus\cspan\left\{A^{*n}D_A \sH,\;
n=0,1,\ldots\right\},\\
&\bigcap\limits_{n\ge 1}\ker
D_{A^{*n}}=\sH\ominus\cspan\left\{A^nD_{A^*} \sH,\;
n=0,1,\ldots\right\}.
\end{split}
\end{equation}
It follows that
\begin{equation}
\label{cu}\begin{array}{l}
 A \;\mbox{is completely
non-unitary}\;\iff\left(\bigcap\limits_{n\ge 1}\ker
D_{A^n}\right)\bigcap\left(\bigcap\limits_{n\ge 1}\ker
D_{A^{*n}}\right)=\{0\}\iff\\
 \iff \cspan\{A^{*n}D_{A},\;A^mD_{A^*},\;n,m\ge
0\}=\sH.
\end{array}
\end{equation}
Note that
\[
\ker D_{A}\supset\ker D_{A^2}\supset\cdots\supset\ker
D_{A^n}\supset\cdots,
\]
\[
A\ker D_{A^n}\subset\ker D_{A^{n-1}},\; n=2,3,\ldots.
\]
From \eqref{perp} we get that the subspaces $\bigcap\limits_{n\ge
1}\ker D_{A^n}$ and $\bigcap\limits_{n\ge 1}\ker D_{A^{*n}}$ are
invariant with respect to $A$ and $A^*$, respectively, and
$A\uphar\bigcap\limits_{n\ge 1}\ker D_{A^n}$ and
$A^*\uphar\bigcap\limits_{n\ge 1}\ker D_{A^{*n}}$ are unilateral
shifts, moreover, these operators are the maximal unilateral shifts
contained in $A$ and $A^*$, respectively \cite[Theorem 1.1,
Corollary 1]{DR}. Thus, for a completely non-unitary contraction $A$
we have
\begin{equation}
\label{SHTCOSHT}
\begin{array}{l}
\bigcap\limits_{n\ge 1}\ker D_{A^n}=\{0\}\iff \quad A\quad\mbox{does
not contain a unilateral shift},\\
\bigcap\limits_{n\ge 1}\ker D_{A^{*n}}=\{0\}\iff \quad
A^*\quad\mbox{does not contain a unilateral shift}.
\end{array}
\end{equation}
By definition \cite{DR} the operator $A$ contains a co-shift $V$ if
the operator $A^*$ contains the unilateral shift $V^*$.

 The function (see \cite[Chapter VI]{SF})
\begin{equation}
\label{Char.funct.} \Phi_A(\lambda)=\left(-A+\lambda
D_{A^*}(I-\lambda A^*)^{-1}D_{A}\right)\uphar\sD_{A}
\end{equation}
is known as the Sz.-Nagy -- Foias \textit{characteristic function}
 of a contraction $A$ \cite{SF}. This function belongs to the Schur
class ${\bf S}(\sD_A,\sD_{A^*})$ and $\Theta_A(0)$ is a pure
contraction. The characteristic functions of $A$ and $A^*$ are
connected by the relation
\[
 \Phi_{A^*}(\lambda)=\Phi^*_A(\bar \lambda),\quad \lambda\in\dD.
\]

Two operator-valued functions $\Theta_1\in{\bf S}(\sM_1,\sN_1)$ and
$\Theta_2\in{\bf S}(\sM_2,\sN_2)$ \textit{coincide} \cite{SF} if
there are two unitary operators $V:\sN_1\to\sN_2$ and
$W:\sM_2\to\sM_1$ such that
\[
V\Theta_1(\lambda)W=\Theta_2(\lambda),\quad \lambda\in\dD.
\]
The result of Sz.-Nagy--Foias  \cite[Theorem VI.3.4]{SF} states that
two completely non-unitary contractions $A_1$ and $A_2$ are unitary
equivalent if and only if their characteristic functions
$\Phi_{A_1}$ and $\Phi_{A_2}$ coincide.

 It is well known that a function $\Theta(\lambda)$
from the Schur class ${\bf S}(\sM,\sN)$ has almost everywhere
non-tangential strong limit values $\Theta(\xi)$, $\xi \in \dT$,
where $\dT=\{\xi \in \dC: |\xi|=1\}$ stands for the unit circle; cf.
\cite{SF}. A function $\Theta \in {\bf S}(\sM,\sN)$ is called
\textit{inner} if $\Theta^*(\xi)\Theta(\xi)=I_\sM$ and
\textit{co-inner} if $\Theta(\xi)\Theta^*(\xi)=I_\sN$ almost
everywhere on $\xi\in\dT$. A function $\Theta\in {\bf S}(\sM,\sN)$
is called \textit{bi-inner}, if it is both inner and co-inner. A
contraction $T$ on a Hilbert space $\sH$ belongs to the class
$C_{0\,\cdot}$ ($C_{\cdot\, 0}$), if
\[
s-\lim\limits_{n\to\infty}A^n=0  \qquad
(s-\lim\limits_{n\to\infty}A^{*n}=0),
\]
respectively. By definition $C_{00}:=C_{0\,\cdot}\cap C_{\cdot\,
0}$. A completely non-unitary contraction $A$ belongs to the class
$C_{\cdot\,0}$, $C_{0\,\cdot}$, or $C_{00}$  if and only if its
characteristic function $\Phi_A(\lambda)$  is inner, co-inner, or
bi-inner, respectively (cf. \cite[Section VI.2]{SF}). Note that for
a completely non-unitary contraction $A$ the equality $\ker D_A=\ker
D_{A^*}\ne\{0\}$ is impossible because otherwise the subspace $\ker
D_A$ reduces $A$ and $A\uphar\ker D_A$ is a unitary operator.

We complete this section by a description of completely non-unitary
contractions with constant characteristic functions. Note that
 $\Phi_A(\lambda)=0\in {\bf S}(\{0\},\sD_{A^*})$ $\iff$ $A$
is a unilateral shift, and  $\Phi_A(\lambda)=0\in {\bf
S}(\sD_A,\{0\})$ $\iff$ $A$ is a co-shift.
\begin{theorem}
\label{const} Let $\sH$ be a separable Hilbert space. A completely
non-unitary contraction $A$ with nonzero defect operators has a
constant characteristic function if and only if $\sH$ is the
orthogonal sum
\[
\sH=\cH_1\oplus\cH_2
\]
and $A$ takes the operator matrix form
\begin{equation}
\label{coupl} A=\begin{bmatrix}S_1 &\Gamma\cr 0& S^*_2
\end{bmatrix}:\begin{array}{l}\cH_1\\\oplus\\\cH_2\end{array}\to
\begin{array}{l}\cH_1\\\oplus\\\cH_2\end{array},
\end{equation}
where $S_1$ and $S_2$ are unilateral shifts in $\cH_1$ and $\cH_2$,
respectively, and $\Gamma$ is a contraction such that
\begin{equation}
\label{Ga} \left\{ \begin{array}{l} \ran\Gamma\subset \sD_{S^*_1},
 \ran\Gamma^*\subset \sD_{S^*_2},\\
||\Gamma f||<||f||,\;
f\in \sD_{S^*_2}\setminus\{0\},\\
||\Gamma^*h||<||h||,\; h\in \sD_{S^*_1}\setminus\{0\}.
\end{array}\right.
\end{equation}
In particular, the characteristic function of $A$ is identically
equal zero if and only if $A$ is the orthogonal sum of a shift and
co-shift.
\end{theorem}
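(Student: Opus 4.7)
The plan is to analyze the power-series expansion
\[
\Phi_A(\lambda)=-A\uphar\sD_A+\sum_{n=0}^{\infty}\lambda^{n+1}D_{A^*}A^{*n}D_A\uphar\sD_A,
\]
so that $\Phi_A$ is constant if and only if $D_{A^*}A^{*n}D_A=0$ on $\sH$ for every $n\ge 0$, equivalently $A^{*n}\sD_A\subset\ker D_{A^*}$ for all such $n$. By \eqref{perp} this amounts to $\sD_{A^*}\subset\cK_1:=\bigcap_{n\ge 1}\ker D_{A^n}$. Assuming in addition that $A$ is completely non-unitary, I would use that $\cK_1$ is $A$-invariant (since $A\ker D_{A^n}\subset\ker D_{A^{n-1}}$) to pass from $\sD_{A^*}\subset\cK_1$ to $\cspan\{A^n\sD_{A^*}:n\ge 0\}\subset\cK_1$; by \eqref{perp} this reads $\cK_2^\perp\subset\cK_1$ with $\cK_2:=\bigcap_{n\ge 1}\ker D_{A^{*n}}$. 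Now \eqref{cu} gives $\cK_1\cap\cK_2=\{0\}$, so $\cK_2=\cK_1^\perp$ and $\sH=\cK_1\oplus\cK_2$ orthogonally. Setting $\cH_1=\cK_1$ and $\cH_2=\cK_2$, the statements following \eqref{perp} give that $S_1:=A\uphar\cH_1$ and $S_2:=A^*\uphar\cH_2$ are unilateral shifts.

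The $A$-invariance of $\cH_1$ and the $A^*$-invariance of $\cH_2$ force $A$ into the block form \eqref{coupl} with some $\Gamma$ as $(1,2)$-entry. Writing out $I-A^*A$ and $I-AA^*$ in block form and using that in a positive $2\times 2$ block matrix a vanishing diagonal entry forces the corresponding off-diagonal blocks to vanish, I would obtain $S_1^*\Gamma=0$ and $\Gamma S_2=0$, which rewrite as $\ran\Gamma\subset\ker S_1^*=\sD_{S_1^*}$ and $\ran\Gamma^*\subset\ker S_2^*=\sD_{S_2^*}$. The strict inequalities in \eqref{Ga} then come by contradiction: if some $f\in\sD_{S_2^*}\setminus\{0\}$ satisfied $\|\Gamma f\|=\|f\|$, then $A^n(0,f)=(S_1^{n-1}\Gamma f,0)$ for $n\ge 1$ and $A^{*n}(0,f)=(0,S_2^n f)$ show that $(0,f)$ lies in both $\cK_1$ and $\cK_2$, contradicting \eqref{cu}; the symmetric argument handles $\Gamma^*$.

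For the converse, given $A$ of the form \eqref{coupl} satisfying \eqref{Ga}, the explicit block forms of $D_A$ and $D_{A^*}$ put $\sD_A\subset\{0\}\oplus\cH_2$ and $\sD_{A^*}\subset\cH_1\oplus\{0\}$, while the lower triangularity of $A^*$ in these coordinates shows that $(I-\lambda A^*)^{-1}$ preserves $\{0\}\oplus\cH_2$; hence $D_{A^*}(I-\lambda A^*)^{-1}D_A\equiv 0$ and $\Phi_A(\lambda)\equiv-A\uphar\sD_A$ is constant. To verify complete non-unitarity I would pick $(x,y)$ in the unitary part $\cK_1\cap\cK_2$ and use the identities
\[
\|A(x,y)\|^2=\|(x,y)\|^2+\|\Gamma P_{\Omega_2}y\|^2-\|P_{\Omega_2}y\|^2,\quad\|A^*(x,y)\|^2=\|(x,y)\|^2+\|\Gamma^*P_{\Omega_1}x\|^2-\|P_{\Omega_1}x\|^2,
\]
where $\Omega_j:=\sD_{S_j^*}$, together with the purity conditions in \eqref{Ga}, to conclude $P_{\Omega_2}y=P_{\Omega_1}x=0$; iterating on $A^k(x,y)$ and $A^{*k}(x,y)$ and using $\bigcap_k S_j^k\cH_j=\{0\}$ then yields $x=y=0$. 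Finally, purity of $\Gamma\uphar\sD_{S_2^*}$ gives $\sD_A=\{0\}\oplus\sD_{S_2^*}$, so $\Phi_A\equiv 0$ reduces to $A\uphar\sD_A=0$, equivalently $\Gamma=0$, which is precisely the case $A=S_1\oplus S_2^*$.

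The main obstacle I anticipate is the bookkeeping in the complete-non-unitarity part of the converse: the purity conditions \eqref{Ga} must be propagated through all powers of $A$ and $A^*$, and the mutual interaction of $\Gamma$ with the two shifts has to be tracked carefully so as to exhaust the unitary part. The rest of the argument is largely algebraic and follows the framework set up by \eqref{perp}--\eqref{cu}.
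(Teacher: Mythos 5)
Your proposal is correct and follows essentially the same route as the paper: both directions hinge on the decomposition $\sH=\bigl(\bigcap_{n}\ker D_{A^n}\bigr)\oplus\bigl(\bigcap_{n}\ker D_{A^{*n}}\bigr)$, the identification of $A$ restricted to these subspaces as a shift and a co-shift, the triangular block form, and the block computation of $D_A^2$, $D_{A^*}^2$. Your sub-steps differ only cosmetically from the paper's (e.g.\ you get $S_1^*\Gamma=0$ from positivity of $I-A^*A$ with a vanishing diagonal block and obtain the purity conditions and complete non-unitarity by exhibiting vectors of the unitary part directly, where the paper uses the norm inequalities $\|\Gamma f\|\le\|D_{S_2^*}f\|$, the identification $\cran(D_{S_2^*}-\Gamma^*\Gamma)=\sD_A$, and the spanning criterion \eqref{cu}), so the arguments are interchangeable.
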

\begin{proof}
Suppose that the contraction $A$ takes the form \eqref{coupl} with
unilateral shifts $S_1$ and $S_2$, and the contraction $\Gamma$ with
the properties \eqref{Ga}. Then
\begin{equation}
\label{DA} D^2_A=\begin{bmatrix} 0&0\cr 0& D_{S^*_2}-\Gamma^*\Gamma
\end{bmatrix}:\begin{array}{l}\cH_1\\\oplus\\\cH_2\end{array}\to
\begin{array}{l}\cH_1\\\oplus\\\cH_2\end{array},
\end{equation}
and
\begin{equation}
\label{DA*} D^2_{A^*}=\begin{bmatrix} D_{S^*_1}-\Gamma\Gamma^*&0\cr
0& 0
\end{bmatrix}:\begin{array}{l}\cH_1\\\oplus\\\cH_2\end{array}\to
\begin{array}{l}\cH_1\\\oplus\\\cH_2\end{array}.
\end{equation}
Since $\sD_{S^*_1}=\ker S^*_1$, $\sD_{S^*_2}=\ker S^*_2$, and
$D_{S^*_1}$ and $D_{S^*_2}$ are the orthogonal projections in $\sH$
onto $\sD_{S^*_1}$ and $\sD_{S^*_2}$, respectively, we get from
\eqref{Ga} the relations
\begin{equation}
\label{DG} \sD_A=\sD_{S^*_2},\; \sD_{A^*}=\sD_{S^*_1}.
\end{equation}
 Taking into
account that $\cH_2$ is an invariant subspace for $A^*$, we have
\[
D_{A^*}(I_\sH-\lambda A^*)^{-1}D_{A}=0.
\]
Hence $\Phi_A(\lambda)=\Gamma\uphar\sD_{S^*_2}=const.$

Because $S_1$ and $S_2$ are unilateral shifts, we get
\[
\cH_1=\sum\limits_{n\ge 0}\oplus S^n_1\sD_{S^*_1},\;
\cH_2=\sum\limits_{n\ge 0}\oplus S^n_2\sD_{S^*_2}.
\]
Since $\sH=\cH_1\oplus\cH_2$, the operator $A$ is completely
non-unitary. If $\Gamma=0$ then $A$ is the orthogonal sum of a shift
and co-shift.

 Now suppose
that the characteristic function of $A$ is a constant. From
\eqref{Char.funct.} we get
\[
D_{A^*}A^{*n}D_{A}=0,\; D_{A}A^{n}D_{A^*}=0,\; n=0,1,2,\ldots.
\]
It follows
\[
\begin{array}{l}
\cspan\{D_{A^{*n}}\sD_{A},\; n=0,1,\ldots\}\subset\ker D_{A^*}\iff
\bigcap\limits_{n\ge 1}\ker D_{A^n}\supset \sD_{A^*},\\
\cspan\{D_{A^{n}}\sD_{A^*},\; n=0,1,\ldots\}\subset\ker D_{A}\iff
\bigcap\limits_{n\ge 1}\ker D_{A^{*n}}\supset\sD_A.
\end{array}
\]
Let
\[
\cH_1=\bigcap\limits_{n\ge 1}\ker D_{A^n},\;\cH_2=
\bigcap\limits_{n\ge 1}\ker D_{A^{*n}}.
\]
Since
\[
 A\cH_1\subset \cH_1\quad \mbox{and}\quad
A\cH_1\perp\sD_{A^*},
\]
we get $\cH_1\ominus A\cH_1\supset \sD_{A^*}$ and similarly
$\cH_2\ominus A^*\cH_2\supset \sD_{A}$. Let $h\in\cH_1$ and $h\perp
\sD_{A^*}$. It follows
\[
h\in\ker D_{A^*}\bigcap\left(\bigcap\limits_{n\ge 1}\ker
D_{A^n}\right).
\]
Then $h=Ag$, $g\in\ker D_A$. Hence $g\in \bigcap\limits_{n\ge 1}\ker
D_{A^n}=\cH_1$, i.e., $\cH_1\ominus A\cH_1=\sD_{A^*}$. Similarly
$\cH_2\ominus A^*\cH_2=\sD_{A}$.

Since $A$ is completely non-unitary contraction, the operators
$A\uphar\cH_1$ and $A^*\uphar\cH_2$ are unilateral shifts. Therefore
\begin{equation}
\label{shift}
 \cH_1=\sum\limits_{n=0}^\infty\oplus A^n\sD_{A^*},\;
\cH_2=\sum\limits_{n=0}^\infty\oplus A^{*n}\sD_A.
\end{equation}
Note that for all $\f,\psi\in \sH$
\[
(A^mD_{A^*}\f,A^{*k}D_{A}\psi)=(D_{A}A^{m+k}D_{A^*}\f,\psi)=0,\;m,k=0,1,2\ldots.
\]
Hence $\cH_1\perp\cH_2$. Taking into account \eqref{shift} and the
relation \[
 \sH\ominus\cH_1=\cspan\{A^{*n}\sD_A,\;
n=0,1,2\ldots\},
\]
we get $\sH\ominus\cH_1=\cH_2$. Because $\cH_1$ is invariant with
respect to $A$, the matrix form of $A$ is of the form \eqref{coupl}
with unilateral shifts
\[
S_1:=A\uphar \cH_1,\; S_2:=A^*\uphar \cH_2,
\]
and some operator $\Gamma\in\bl(\cH_2,\cH_1$. Since $A$ is a
contraction, we have
\[
\begin{array}{l}
||\Gamma f||^2\le ||D_{S_2^*}f||^2,\; f\in \cH_2,\\
||\Gamma^* h||^2\le ||D_{S_1^*}h||^2,\; h\in \cH_1.
\end{array}
\]
From \eqref{DA} and \eqref{DA*} we get
\[
\cran (D_{S_2^*}-\Gamma^*\Gamma)=\sD_{A},\; \cran
(D_{S_1^*}-\Gamma\Gamma^*)=\sD_{A^*}.
\]
It follows that \eqref{Ga} holds true and $\Phi_A(\lambda)=\Gamma$.

If $A$ is the orthogonal sum of a shift and co-shift then clearly
the characteristic function of $A$ is identically zero.
 \end{proof}

\section{Contractions generated by a contraction}
In this section we define and study the subspaces and the
corresponding operators obtained from a completely non-unitary
contraction $A$ in a separable Hilbert space $\sH$.

Suppose $\ker D_A\ne \{0\}$. Define the subspaces
\begin{equation}
\label{hnm} \left\{
\begin{array}{l}
\sH_{0,0}:=\sH\\
 \sH_{n,0}=\ker D_{A^n},\;
  \sH_{0,m}:=\ker
D_{A^{*m}},\\
\sH_{n, m}:=\ker D_{A^{n}}\cap \ker D_{A^{*m}},\;
m,n\in\dN
\end{array}
\right.
\end{equation}
Let $P_{n,m}$ be the orthogonal projection in $\sH$ onto
$\sH_{n,m}$. Define the contractions
\begin{equation}
\label{Anm}
 A_{n, m}:=P_{n,
m}A\uphar\sH_{n, m}\in\bL(\sH_{n,m})
\end{equation}
and
\begin{equation}\label{CA}
\cA_{n,m}:=A_{n,m}P_{n+1,m}\uphar\sH_{n,m}\in\bL(\sH_{n,m}).
\end{equation}
In the next theorem we establish the main properties of $A_{n,m}$
and $\cA_{n,m}$.
\begin{theorem}
\label{RELATT}
\begin{enumerate}
\item Hold the relations
\begin{equation}
\label{AKNM}\left\{\begin{array}{l}
 \ker D_{A^k_{n,m}}=\sH_{n+k,m}\\
 \ker D_{A^{*k}_{n,m}}=\sH_{n,m+k}
 \end{array},\right. k=1,2,\ldots,
\end{equation}
\begin{equation}
\label{REL2} \left\{\begin{array}{l}
\sD_{A_{n,m}}=\cran{(P_{n,m}D_{A^{n+1}})},\\
\sD_{A^*_{n,m}}=\cran{(P_{n,m}D_{A^{*{m+1}}})}
\end{array}\right.,
\end{equation}
\begin{equation}
\label{REL} \left\{ \begin{array}{l} A\sH_{n,m}=\sH_{n-1,
m+1},\;n\ge 1,\\
 A^*\sH_{n,m}=\sH_{n+1, m-1},\; m\ge 1
 \end{array}\right.,
\end{equation}
\begin{equation}
\label{RELL3} \left\{ \begin{array}{l} \ker
D_{\cA_{n,m}^k}=\sH_{n+k,m}\\
 \ker D_{\cA_{n,m}^{*k}}=\sH_{n,m+k}\end{array}\right. k=1,2,\ldots,
\end{equation}
\begin{equation}
\label{RELL4} \left\{\begin{array}{l}
\sD_{\cA_{n,m}}=\sD_{A_{n+1,m}}\\
\sD_{\cA^*_{n,m}}=\sD_{A^*_{n+1,m}}\end{array} \right.,
\end{equation}
\begin{equation}
\label{UE1}
 \left(A_{n,m}\right)_{k,l}=A_{n+k,m+l}.
\end{equation}
\item
 The operators $\left\{A_{n,m}\right\}$ and $\left\{\cA_{n,m}\right\}$ are completely non-unitary
contractions.
\item
 The operators
 \[
A_{n,0},\; A_{n-1,1},\; \ldots, A_{n-k,k},\ldots,A_{0,n}
\]
are unitarily equivalent and
\begin{equation}
\label{UE}  A_{n-1,m+1}Af=AA_{n,m}f,\;f\in\sH_{n,m},\; n\ge 1.
\end{equation}
\item
 The operators
 \[
\cA_{n,0},\; \cA_{n-1,1},\; \ldots, \cA_{n-k,k},\ldots,\cA_{0,n}
\]
are unitarily equivalent and
\begin{equation}
\label{UE2} \cA_{n-1,m+1} Af=A\cA_{n,m}f,\;f\in\sH_{n,m}, \; n\ge 1.
\end{equation}
\item The following statements are equivalent
\begin{enumerate}
\def\labelenumi{\rm (\roman{enumi})}
\item $\cA_{n,0}\in C_{\cdot\,0}$ $(\cA_{n,0}\in C_{0\,\cdot})$ for some $n$,
\item $A_{n+1-k,k}\in C_{\cdot\,0}$ $(A_{n+1-k,k}\in
C_{0\,\cdot})$ for all $k=0,1,\ldots,n+1. $
\end{enumerate}
\end{enumerate}
\end{theorem}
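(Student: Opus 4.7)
The plan is to derive everything from a single structural identity: the mapping property \eqref{REL}. First I would establish \eqref{REL} directly. For $h\in\sH_{n,m}$ with $n\ge 1$, the chain
\[
\|h\|=\|A^nh\|=\|A^{n-1}(Ah)\|\le\|Ah\|\le\|h\|
\]
forces $h\in\ker D_A$ and $Ah\in\ker D_{A^{n-1}}$. Then $A^*Ah=h$ combined with $\|A^{*m}h\|=\|h\|$ yields $A^{*(m+1)}Ah=A^{*m}h$, hence $Ah\in\ker D_{A^{*(m+1)}}$, proving $A\sH_{n,m}\subseteq\sH_{n-1,m+1}$. For the reverse inclusion, take $g\in\sH_{n-1,m+1}$; since $\ker D_{A^{*(m+1)}}\subset\ker D_{A^*}$ we have $AA^*g=g$, and $h:=A^*g$ lies in $\sH_{n,m}$ by $A^{*m}h=A^{*(m+1)}g$ and $A^nh=A^{n-1}g$. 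The adjoint statement is symmetric.

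Next I would prove \eqref{AKNM}, beginning with $k=1$: the equality $\|P_{n,m}Ah\|=\|h\|$ together with $\|P_{n,m}Ah\|\le\|Ah\|\le\|h\|$ forces $Ah\in\sH_{n,m}$ and $h\in\ker D_A$, which combined with $h\in\sH_{n,m}$ yields $h\in\sH_{n+1,m}$; the converse uses \eqref{REL}. An inductive step, exploiting that on $\sH_{n+k,m}\subset\ker D_{A_{n,m}}$ the compression $A_{n,m}$ agrees with the isometric restriction of $A$, extends the identity to arbitrary $k$; the adjoint identity is dual. Identity \eqref{REL2} is then a short computation: $(P_{n,m}D_{A^{n+1}})^*=D_{A^{n+1}}P_{n,m}$ has kernel inside $\sH_{n,m}$ equal to $\sH_{n,m}\cap\sH_{n+1,0}=\sH_{n+1,m}$, whose orthogonal complement in $\sH_{n,m}$ is $\sD_{A_{n,m}}$ by \eqref{AKNM}. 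Relation \eqref{UE1} reduces to the set-theoretic identity $\sH^{A_{n,m}}_{k,l}=\sH_{n+k,m+l}$ obtained by substituting \eqref{AKNM} into the definition of the iterated subspaces, after which $P_{n+k,m+l}P_{n,m}=P_{n+k,m+l}$ collapses the composition to $A_{n+k,m+l}$.

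The unitary equivalences are governed by the map $U_{n,m}\colon\sH_{n,m}\to\sH_{n-1,m+1}$, $U_{n,m}f=Af$. It is isometric because $\sH_{n,m}\subset\ker D_A$ and surjective by \eqref{REL}, hence unitary. The intertwining \eqref{UE} reduces to $P_{n-1,m+1}A^2f=AP_{n,m}Af$ for $f\in\sH_{n,m}$: writing $Af=P_{n,m}Af+(I-P_{n,m})Af$ and applying $A$, the second summand is annihilated by $P_{n-1,m+1}$ because $A^*\sH_{n-1,m+1}=\sH_{n,m}$ by \eqref{REL} for $A^*$, while the first summand lies in $\sH_{n-1,m+1}$ and is preserved. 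Composing the unitaries $U_{n-k,k}$ yields the full chain of unitary equivalences among $A_{n,0},\dots,A_{0,n}$. Complete non-unitarity of $A_{n,m}$ follows by combining \eqref{AKNM} with \eqref{cu}: the intersection of $\bigcap_k\sH_{n+k,m}$ and $\bigcap_k\sH_{n,m+k}$ collapses to $\bigl(\bigcap_{k\ge 1}\ker D_{A^k}\bigr)\cap\bigl(\bigcap_{k\ge 1}\ker D_{A^{*k}}\bigr)=\{0\}$.

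The analysis of $\cA_{n,m}$ hinges on the identity $\cA_{n,m}^*\cA_{n,m}=P_{n+1,m}$ as an operator on $\sH_{n,m}$, which holds because $A_{n,m}$ is isometric on $\sH_{n+1,m}=\ker D_{A_{n,m}}$; consequently $D_{\cA_{n,m}}$ is itself the orthogonal projection $I-P_{n+1,m}$ and \eqref{RELL3}, \eqref{RELL4} drop out for $k=1$, with higher powers handled by the same inductive pattern as before, using $\cA_{n,m}^jh=A^jh$ on $\sH_{n+j,m}$. Writing $\cA_{n,m}$ in the block decomposition $\sH_{n,m}=\sH_{n+1,m}\oplus\sD_{A_{n,m}}$ produces a lower-triangular form with $A_{n+1,m}$ in the $(1,1)$-block and zero in the $(2,2)$-block, a shape retained by all powers and adjoints. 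This immediately yields the $C_{\cdot\,0}/C_{0\,\cdot}$ equivalence: strong convergence of $\cA_{n,0}^k$ (or of $\cA_{n,0}^{*k}$) is controlled entirely by $A_{n+1,0}$ and its adjoint, the off-diagonal term being dominated by one extra power. Combined with the already established unitary equivalence of $A_{n+1,0},\dots,A_{0,n+1}$, this gives (i)$\iff$(ii). Unitary equivalence of the $\cA_{n-k,k}$'s is verified via the same $U_{n,m}$ together with the auxiliary identity $AP_{n+1,m}f=P_{n,m+1}Af$ on $\sH_{n,m}$, proved from $A^*\sH_{n,m+1}=\sH_{n+1,m}$. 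Complete non-unitarity of $\cA_{n,m}$ follows from \eqref{RELL3} as in the $A_{n,m}$ case. The main obstacle is purely bookkeeping: keeping the four indices $n,m,k,l$ aligned across the cascade of sub-claims and verifying that the additional compression by $P_{n+1,m}$ interacts cleanly with the framework built for $A_{n,m}$.
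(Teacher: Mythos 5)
Your proposal is correct and follows essentially the same route as the paper: the same kernel identities \eqref{AKNM}, the same intertwining relation $AP_{n,m}=P_{n-1,m+1}A$ underlying \eqref{UE} and \eqref{UE2}, and the same reduction of $\cA_{n,m}$ to $AP_{n+1,m}$, with your block-triangular decomposition of $\cA_{n,m}$ over $\sH_{n+1,m}\oplus(\sH_{n,m}\ominus\sH_{n+1,m})$ being just a matrix reformulation of the paper's norm identity $\|\cA_{n,m}^kf\|=\|A_{n+1,m}^{k-1}P_{n+1,m}f\|$; your choice to prove \eqref{REL} before \eqref{AKNM} is an immaterial reordering. One point needs care: your (correct) computation $D_{\cA_{n,m}}^2=I_{\sH_{n,m}}-P_{n+1,m}$ gives $\sD_{\cA_{n,m}}=\sH_{n,m}\ominus\sH_{n+1,m}=\sD_{A_{n,m}}$, which is \emph{not} the printed assertion $\sD_{\cA_{n,m}}=\sD_{A_{n+1,m}}$ of \eqref{RELL4}, so you cannot claim that \eqref{RELL4} ``drops out'' of this identity. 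The printed indices in \eqref{RELL4} appear to be a misprint (they are incompatible with the use of $\sD_{\cA}=\sD_A$, $\sD_{\cA^*}=\sD_{A^*}$ in Theorem \ref{EXX}), and your argument in fact establishes the corrected version $\sD_{\cA_{n,m}}=\sD_{A_{n,m}}$, $\sD_{\cA^*_{n,m}}=\sD_{A^*_{n,m}}$; you should state that discrepancy explicitly rather than pass over it.
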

\begin{proof} It is sufficient to prove the first equality from \eqref{AKNM}.
 From \eqref{hnm} and \eqref{Anm} we have
\[
\begin{array}{l}
f\in\sH_{n,m},\;f\in\ker D_{A^k_{n,m}}\iff \left\{
\begin{array}{l}||f||=||A^n f||=||A^{*m}f||\\
||f||=||A^k_{n,m}f||
\end{array}
 \right.\\
 \iff Af,\ldots, A^kf\in\sH_{n,m}\iff f\in \sH_{n+k,m}.
 \end{array}
\]
This proves \eqref{AKNM}. Hence
\[
\begin{array}{l}
\sD_{A_{n,m}}=\sH_{n,m}\ominus\sH_{n+1,m}=\sH_{n,m}\ominus \left(\ker D_{A^{n+1}}\cap \ker D_{A^{*m}}\right)=\\
=\sH_{n,m}\cap\overline{\sD_{A^{n+1}}+\sD_{A^{*m}}}=\cran{(P_{n,m}D_{A^{n+1}})},\\
\sD_{A^*_{n,m}}=\sH_{n,m}\ominus\sH_{n,m+1}=\sH_{n,m}\ominus
\left(\ker D_{A^{n}}\cap \ker
D_{A^{*{m+1}}}\right)=\\
=\sH_{n,m}\cap\overline{\sD_{A^{n}}+\sD_{A^{*{m+1}}}}=\cran{(P_{n,m}D_{A^{*{m+1}}})},
\end{array}
\]
i.e., relations \eqref{REL2} are valid.
 Furthermore if $n\ge 2$ then
\[
f\in\sH_{n,m}\iff \left\{\begin{array}{l}Af\in\ker
D_{A^{n-1}},\\A^*Af=f,\\
f\in\ker D_{A^{*m}}\;(\mbox{for}\; m\ge 1)\end{array}\right. \iff
Af\in\ker D_{A^{n-1}}\cap\ker D_{A^{*{m+1}}} =\sH_{n-1,m+1}.
\]
If $n=1$ then
\[
f\in\sH_{1,m}\iff  \left\{\begin{array}{l}A^*Af=f,\\
f\in\ker D_{A^{*m}}\end{array}\right. \iff Af\in\ker D_{A^{*{m+1}}}
=\sH_{0,m+1}.
\]

 Similarly $ A^*\sH_{n,m}=\sH_{n+1, m-1},\; m\ge 1.$ Therefore
 relations \eqref{REL} hold true.

 Let $\f\in\sH$, $\psi\in\sH_{n-1,m+1}$. Then $A^*\psi\in\sH_{n,m}$
and
\[
(AP_{n,m}\f,\psi)=(P_{n,m}\f, A^*\psi)=(\f,
A^*\psi)=(A\f,\psi)=(P_{n-1,m+1}A\f,\psi).
\]
Hence
\begin{equation}
\label{IMPORT}
 AP_{n,m}=P_{n-1,m+1}A.
\end{equation}
 Taking into account \eqref{REL}, we get
\[
AP_{n,m}Ah=P_{n-1,m+1}A Ah,\; h\in\sH_{n,m}.
\]
This proves \eqref{UE}. Since $A$ isometrically maps $\sH_{n,m}$
onto $\sH_{n-1,m+1}$ for $n\ge 1$, the operators $A_{n-1,m+1}$ and
$A_{n,m}$ are unitarily equivalent, and therefore the operators
\[
A_{n,0},\; A_{n-1,1},\; \ldots, A_{n-k,k},\ldots,A_{0,n}
\]
are unitarily equivalent.

Note that \eqref{AKNM} and \eqref{REL} yield the equalities
\begin{equation}
\label{shtcosht}
\begin{array}{l}
\bigcap\limits_{k\ge1}\ker D_{A^k_{n,m}}=\ker
D_{A^{*m}}\bigcap\left(\bigcap\limits_{j\ge 1}\ker D_{A^j}\right)=
A^m\left(\bigcap\limits_{j\ge 1}\ker D_{A^j}\right),\\
\bigcap\limits_{k\ge1}\ker D_{A^{*k}_{n,m}}=\ker
D_{A^{n}}\bigcap\left(\bigcap\limits_{j\ge 1}\ker
D_{A^{*j}}\right)=A^{*n}\left(\bigcap\limits_{j\ge 1}\ker
D_{A^{*j}}\right).
\end{array}
\end{equation}
Since $A$ is a completely non-unitary contraction, we get
\[
\left(\bigcap\limits_{k\ge1}\ker D_{A^k_{n,m}}\right)\bigcap
\left(\bigcap\limits_{k\ge1}\ker D_{A^{*k}_{n,m}}\right)=\{0\}.
\]
It follows that the contractions $A_{n,m}$ are completely
non-unitary.

Note that $\sH_{n-1,m+1}\subset\sH_{n-1,m}$ and
$\sH_{n+1,m}\subset\sH_{n,m}$. Using \eqref{REL} we get
\[
\begin{array}{l}
A_{n-1,m+1}P_{n,m+1}=P_{n-1,m+1}AP_{n,m+1}=AP_{n,m+1},\\
A_{n,m}P_{n+1,m}=P_{n,m}AP_{n+1,m}=AP_{n+1,m}.
\end{array}
\]
In particular, it follows that the operators $A_{n,m}P_{n+1,m}$ are
partial isometries. From \eqref{IMPORT} we obtain
\[
AP_{n,m+1}A=A^2 P_{n+1,m},
\]
i.e.,
\[
A_{n-1,m+1}P_{n,m+1}Af=AA_{n,m}P_{n+1,m}f\quad\mbox{for all}\quad
f\in\sH_{n,m}.
\]
Because $A$ is unitary operator from $\sH_{n,m}$ onto
$\sH_{n-1,m+1}$, we get \eqref{UE2} and so the operators
$\cA_{n-1,m+1}$ and $\cA_{n,m}$ are unitarily equivalent.

By induction it can be easily proved that for every $k\in\dN$ holds
the equality
\begin{equation}
\label{C00}
\cA_{n,m}^kf=(AP_{n+1,m})^kf=AA^{k-1}_{n+1,m}P_{n+1,m}f,\;
f\in\sH_{n,m}.
\end{equation}
Since $A\uphar\sH_{n+1,m}$ is isometric, relations \eqref{C00} imply
\[
||\cA_{n,m}^kf||=||A^{k-1}_{n+1,m}P_{n+1,m}f||,\; f\in\sH_{n,m}, \;
k\in\dN.
\]
It follows the equivalence of the statements (a) and (b) and
\[
\ker D_{\cA_{n,m}^k}=\ker D_{A^{k-1}_{n+1,m}}=\sH_{n+k,m}.
\]
Similarly, since $(A_{n,m}P_{n+1,m})^*=A^*_{n,m}P_{n,m+1}$, we get
\[
\ker D_{\cA_{n,m}^{*k}}=\ker D_{A^{*k-1}_{n,m+1}}=\sH_{n,m+k}.
\]
Thus, relations \eqref{RELL3} are valid.

Now we get that the operators $A_{n,m}P_{n+1,m}$ are completely
non-unitary.
From \eqref{AKNM} it follows that
\[
\begin{array}{l}
\ker D_{A^k_{n,m}}\cap\ker
D_{A^{*l}_{n,m}}=\sH_{n+k,m}\cap\sH_{n,m+l}=\\
\ker D_{A^{n+k}}\cap\ker D_{A^{*m}}\cap\ker D_{A^n}\cap\ker
D_{A^{*m+l}}=\ker D_{A^{n+k}}\cap \ker D_{A^{*m+l}}=\sH_{n+k,m+l}.
\end{array}
\]
Hence
\[
\left(A_{n,m}\right)_{k,l}=P_{n+k,m+l}P_{n,m}A\uphar\sH_{n+k,m+l}=A_{n+k,m+l}.
\]
\end{proof}
 The relation \eqref{UE1} yields the following picture
for the creation of the operators $A_{n,m}$:

\xymatrix{
&&&A\ar[ld]\ar[rd]\\
&&A_{1,0}\ar[ld]\ar[rd]&&A_{0,1}\ar[ld]\ar[rd]\\
&A_{2,0}\ar[ld]\ar[rd]&&A_{1,1}\ar[ld]\ar[rd]&&A_{0,2}\ar[ld]\ar[rd]\\
A_{3,0}&&
A_{2,1}&&A_{1,2}&&A_{0,3}\\
\cdots&\cdots&\cdots&\cdots&\cdots&\cdots&\cdots}

The process terminates on the $N$-th step if and only if
\[
\begin{array}{l}
\ker D_{A^N}=\{0\}\iff \ker D_{A^{N-1}}\cap\ker
D_{A^*}=\{0\}\iff\ldots\\ \ker D_{A^{N-k}}\cap\ker
D_{A^{*k}}=\{0\}\iff\ldots \ker D_{A^{*N}}=\{0\}.
\end{array}
\]
Note that from \eqref{SHTCOSHT}, \eqref{RELL3}, and \eqref{shtcosht}
we get
\begin{proposition}
\label{ShtCOsht} Let $A$ be a completely non-unitary contraction. If $A$ does not contain a unilateral shift (co-shift) then the same is true for the operators $\cA_{n,m}$ and $A_{n,m}$ for all $n$ and $m$.
Conversely, if for some $n$ and $m$ the operator $\cA_{n,m}$ or $A_{n,m}$ does not contain a unilateral shift (co-shift) then the same is valid for $A$.
\end{proposition}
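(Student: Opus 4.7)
The plan is to reduce the entire statement to the characterization \eqref{SHTCOSHT}, applied both to $A$ and to the auxiliary operators $A_{n,m}$ and $\cA_{n,m}$. Since these auxiliary operators are themselves completely non-unitary (Theorem \ref{RELATT}, part (2)), the criterion \eqref{SHTCOSHT} is available for each of them: $A_{n,m}$ does not contain a unilateral shift iff $\bigcap_{k\ge 1}\ker D_{A_{n,m}^k}=\{0\}$, and does not contain a co-shift iff $\bigcap_{k\ge 1}\ker D_{A_{n,m}^{*k}}=\{0\}$, and analogously for $\cA_{n,m}$. The relations \eqref{AKNM} and \eqref{RELL3} show that the kernel intersections associated with $A_{n,m}$ and $\cA_{n,m}$ coincide, so it suffices to argue for $A_{n,m}$, and then identity \eqref{shtcosht} is exactly the link we need:
\[
\bigcap_{k\ge 1}\ker D_{A^k_{n,m}}=A^m\Bigl(\bigcap_{j\ge 1}\ker D_{A^j}\Bigr),\qquad
\bigcap_{k\ge 1}\ker D_{A^{*k}_{n,m}}=A^{*n}\Bigl(\bigcap_{j\ge 1}\ker D_{A^{*j}}\Bigr).
\]

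The direct implication is then immediate: if $A$ contains no unilateral shift, then by \eqref{SHTCOSHT} the right-hand side of the first identity equals $A^m(\{0\})=\{0\}$, hence $A_{n,m}$ (and $\cA_{n,m}$) contains no unilateral shift for every $n,m$. The argument for co-shifts is the mirror image, using the second identity and $A^{*n}$.

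For the converse, suppose that $A^m(\bigcap_{j\ge 1}\ker D_{A^j})=\{0\}$ for some $n,m$. The key observation is that the subspace $\bigcap_{j\ge 1}\ker D_{A^j}$ is contained in $\ker D_A$ and is $A$-invariant (as already noted in the discussion around \eqref{perp}), so $A$ acts isometrically on it; consequently $A^m$ restricted to this subspace is injective, and the equality forces $\bigcap_{j\ge 1}\ker D_{A^j}=\{0\}$. By \eqref{SHTCOSHT} this means $A$ contains no unilateral shift. The co-shift case is symmetric: $A^*$ is isometric on $\bigcap_{j\ge 1}\ker D_{A^{*j}}$, so $A^{*n}$ is injective there, and the triviality of its image forces the triviality of the subspace itself.

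The argument for $\cA_{n,m}$ in place of $A_{n,m}$ is identical, since \eqref{RELL3} gives $\ker D_{\cA_{n,m}^k}=\sH_{n+k,m}=\ker D_{A_{n,m}^k}$ and $\ker D_{\cA_{n,m}^{*k}}=\sH_{n,m+k}=\ker D_{A_{n,m}^{*k}}$. There is no genuine obstacle here; the only point that requires a moment of care is verifying that $A^m$ (respectively $A^{*n}$) is injective on the relevant intersection, and this is a one-line consequence of the isometric action of $A$ (respectively $A^*$) on $\bigcap_{j\ge 1}\ker D_{A^j}$ (respectively $\bigcap_{j\ge 1}\ker D_{A^{*j}}$).
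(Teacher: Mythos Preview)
Your proof is correct and follows exactly the approach indicated in the paper, which simply cites \eqref{SHTCOSHT}, \eqref{RELL3}, and \eqref{shtcosht} without further elaboration. You have filled in the details the paper leaves implicit, in particular the observation that $A$ (respectively $A^*$) acts isometrically on $\bigcap_{j\ge 1}\ker D_{A^j}$ (respectively $\bigcap_{j\ge 1}\ker D_{A^{*j}}$), which is precisely what makes the converse implication work.
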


 Let $\delta_A=\dim\sD_A$,
$\delta_{A^*}=\dim\sD_{A^*}$ be the defect numbers of a completely
non-unitary contraction $A$. For $n=1,\ldots$ denote by $\delta_{n}$
and $\delta^*_{n}$ the defect numbers of unitarily equivalent
operators $\{A_{n-m,m}\}_{m=0}^n$. From the relations \eqref{REL2}
it follows that
\[\begin{array}{l}
\delta_n=\dim\sD_{A_{0,n}}=\dim\left(\cran(P_{0,n}D_{A})\right)=\dim\left(\sD_A\ominus(\sD_{A}\cap\sD_{A^{*n}})\right),\\
\delta^*_n=\dim\sD_{A^*_{n,0}}=\dim\left(\cran(P_{n,0}D_{A^*})\right)=\dim\left(\sD_{A^*}\ominus(\sD_{A^*}\cap\sD_{A^{n}})\right).
\end{array}
\]
Thus
\[
\begin{array}{l}
 \delta_A\ge\delta_1\ge\cdots\ge\delta_n\ge\cdots,\\
\delta_{A^*}\ge\delta^*_1\ge\cdots\ge\delta^*_n\ge\cdots.
\end{array}
\]
Observe also that
\[
\delta_1=\dim\left(\sD_A\ominus(\sD_{A}\cap\sD_{A^{*}})\right),\;\delta^*_1=\dim\left(\sD_{A^*}\ominus(\sD_{A}\cap\sD_{A^{*}})\right),
\]
and by induction
\[
\delta_n=\dim\left(\sD_{A_{n-1,0}}\ominus(\sD_{A_{n-1,0}}\cap\sD_{A^*_{n-1,0}})\right),\;
\delta^*_n=\dim\left(\sD_{A^*_{n-1,0}}\ominus(\sD_{A_{n-1,0}}\cap\sD_{A^*_{n-1,0}})\right).
\]

\section{Passive discrete-time linear systems and their
transfer functions}  \label{secS}
\subsection{Basic definitions} Let
$\sM,\sN$, and $\sH$ be separable Hilbert spaces. A linear system
$\tau=\left\{\begin{bmatrix} D&C \cr B&A\end{bmatrix};\sM,\sN,
\sH\right\}$ with bounded linear operators $A$, $B$, $C$, $D$  of
the form
\begin{equation}
\label{passive}
 \left\{
 \begin{array}{l}
  h_{k+1}=Ah_k+B\xi_k,\\
  \sigma_k=Ch_k+D\xi_k,
\end{array}
\right. \qquad k\ge 0,
\end{equation}
where $\{h_k\}\subset \sH$, $\{\xi_k\}\subset \sM$,
$\{\sigma_k\}\subset \sN$, is called a \textit{discrete-time
system}. The Hilbert spaces $\sM$ and $\sN$ are called the input and
the output spaces, respectively, and the Hilbert space $\sH$ is
called the state space. The operators $A$, $B$, $C$, and $D$ are
called the \textit{state space operator}, the \textit{control
operator}, the \textit{observation operator}, and the
\textit{feedthrough operator} of $\tau$, respectively. If the linear
operator $T_\tau$ defined by the block form
\begin{equation}
\label{sys} T_\tau=\begin{bmatrix} D&C \cr B&A\end{bmatrix} :
\begin{array}{l} \sM \\\oplus\\ \sH \end{array} \to
\begin{array}{l} \sN \\\oplus\\ \sH \end{array}
\end{equation}
is contractive, then the corresponding discrete-time system is said
to be \textit{passive}. If the block operator matrix $T_\tau$ is
isometric (co-isometric, unitary), then the system is said to be
\textit{isometric} (\textit{co-isometric}, \textit{conservative}).
Isometric and co-isometric systems were studied by L.~de Branges and
J.~Rovnyak (see \cite{BrR1}, \cite{BrR2}) and by T.~Ando (see
\cite{Ando}), conservative systems have been investigated by
B.~Sz.-Nagy and C.~Foia\c{s} (see \cite{SF}) and M.S.~Brodski\u{\i}
(see \cite{Br1}). Passive systems have been studied by D.Z.~Arov et
al (see \cite{A}, \cite{Arov}, \cite{ArKaaP}, \cite{ArNu1},
\cite{ArNu2}).

The subspaces
\begin{equation}
\label{CO} \sH^c:=\cspan\{A^{n}B\sM:\,n=0,1,\ldots\} \mbox{ and }
\sH^o=\cspan\{A^{*n}C^*\sN:\,n=0,1,\ldots\}
\end{equation}
are said to be the \textit{controllable} and \textit{observable}
subspaces of the system $\tau$, respectively. The system $\tau$ is
said to be \textit{controllable} (\textit{observable}) if
$\sH^c=\sH$ ($\sH^o=\sH$), and it is called \textit{minimal} if
$\tau$ is both controllable and observable. The system $\tau$ is
said to be \textit{simple} if $\sH=\clos \{\sH^c+\sH^o\}$ (the
closure of the span). It follows from \eqref{CO} that
\begin{equation}
\label{eqco}
 (\sH^c)^\perp=\bigcap\limits_{n=0}^\infty\ker(B^*A^{*n}),\quad
 (\sH^o)^\perp=\bigcap\limits_{n=0}^\infty\ker(CA^{n}),
\end{equation}
and therefore there are the following alternative characterizations:
\begin{enumerate}\def\labelenumi{\rm (\alph{enumi})}
\item $\tau$ is controllable $\iff
\;\bigcap\limits_{n=0}^\infty\ker(B^*A^{*n})=\{0\}$;
\item  $\tau$ is observable $\iff
\;\bigcap\limits_{n=0}^\infty\ker(CA^{n})=\{0\}$;
\item  $\tau$ is simple $\iff
\left(\bigcap\limits_{n=0}^\infty\ker(B^*A^{*n})\right)\cap
\left(\bigcap\limits_{n=0}^\infty\ker(CA^{n})\right)=\{0\}.$
\end{enumerate}
The \textit{transfer function}
\begin{equation}
\label{TrFu} \Theta_\tau(\lambda):=D+\lambda C(I_{\sH}-\lambda
A)^{-1}B, \quad \lambda \in \dD,
\end{equation}
of the passive system $\tau$ belongs to the Schur class ${\bf
S}(\sM,\sN)$ \cite{A}. Conservative systems are also called the
\textit{unitary colligations}
 and their transfer
functions are called the characteristic functions \cite{Br1}.

The examples of conservative systems are given by
\[
\Sigma=\left\{\begin{bmatrix}-A& D_{A^*}\cr D_{A}& A^*\end{bmatrix};
 \sD_{A},\sD_{A^*},\sH\right\},\;\Sigma_*=\left\{\begin{bmatrix}-A^*&
D_{A}\cr D_{A^*}& A\end{bmatrix};  \sD_{A^*},\sD_{A},\sH\right\}.
\]
The transfer functions of these systems
\[
\Phi_\Sigma(\lambda)=\left(-A+\lambda D_{A^*}(I_\sH-\lambda
A^*)^{-1}D_{A}\right)\uphar\sD_{A}, \quad \lambda \in \dD
\]
and
\[
\Phi_{\Sigma_*}(\lambda)=\left(-A^*+\lambda D_{A}(I_\sH-\lambda
A)^{-1}D_{A^*}\right)\uphar\sD_{A^*}, \quad \lambda \in \dD
\]
are exactly characteristic functions of $A$ and $A^*$,
correspondingly.

It is well known that every operator-valued function
$\Theta(\lambda)$ from the Schur class ${\bf S}(\sM,\sN)$ can be
realized as the transfer function of some passive system, which can
be chosen as controllable isometric (observable co-isometric, simple
conservative, minimal passive); cf. \cite{BrR2}, \cite{SF},
\cite{Ando} \cite{A}, \cite{ArKaaP}, \cite{ADRS}. Moreover, two
controllable isometric (observable co-isometric, simple
conservative) systems with the same transfer function are unitarily
similar: two discrete-time systems
\[
\tau_1=\left\{\begin{bmatrix} D&C_1 \cr
B_1&A_1\end{bmatrix};\sM,\sN,\sH_{1}\right\} \quad \mbox{and} \quad
\tau_2=\left\{\begin{bmatrix} D&C_2 \cr
B_2&A_2\end{bmatrix};\sM,\sN,\sH_{2}\right\}
\]
are said to be \textit{unitarily similar} if there exists a unitary
operator $U$ from $\sH_{1}$ onto $\sH_{2}$ such that
\[
A_1 =U^{-1}A_2U,\quad B_1=U^{-1}B_2,\quad C_1=C_2 U;
\]
cf. \cite{BrR1}, \cite{BrR2}, \cite{Ando}, \cite{Br1}, \cite{ADRS}.
However, a result of D.Z.~Arov~\cite{A} states that two minimal
passive systems $\tau_1$ and $\tau_2$ with the same transfer
function $\Theta(\lambda)$ are only \textit{weakly similar}, i.e.,
there is a closed densely defined operator $Z:\sH_{1}\to\sH_{2}$
such that $Z$ is invertible, $Z^{-1}$ is densely defined, and
\[
Z A_1f =A_2 Zf, \quad C_1f=C_2 Zf,\quad f\in\dom Z,\quad \mbox{and}
\quad ZB_1=B_2.
\]

\subsection{Defect functions of the Schur class functions}

The following result \cite[Proposition V.4.2]{SF} is needed in the
sequel.
\begin{theorem}\label{Fact}
Let $\sM$ be a separable Hilbert space and let $N(\xi)$,
$\xi\in\dT$,  be an $\bL(\sM)$-valued measurable function such that
$0\le N(\xi)\le I_\sM$. Then there exist a Hilbert space $\sK$ and
an outer function $\varphi(\lambda) \in {\bf S}(\sM,\sK)$ satisfying
the following conditions:
\begin{enumerate}
\def\labelenumi{\rm (\roman{enumi})}
\item $\varphi^*(\xi)\varphi(\xi)\le N^2(\xi)$ a.e.
on $\dT$;

\item if $\wt \sK$ is a Hilbert space and $\wt\varphi(\lambda)\in
{\bf S}(\sM,\wt\sK)$ is such that
$\wt\varphi^*(\xi)\wt\varphi(\xi)\le N^2(\xi)$ a.e.  on $\dT$, then
$\wt\varphi^*(\xi)\wt\varphi(\xi)\le \varphi^*(\xi)\varphi(\xi)$
a.e.  on $\dT$.
\end{enumerate}
Moreover, the function $\varphi(\lambda)$ is uniquely defined up to
a left constant unitary factor.
\end{theorem}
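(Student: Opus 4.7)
The plan is to construct $\varphi$ as the analytic symbol associated with a weighted Hardy-type space attached to $N^2$, via a Wold-decomposition argument, and then to deduce both properties from shift-intertwining. First I would form the Hilbert space $\cH_N$ by completing the space of $\sM$-valued analytic polynomials under the seminorm $\|p\|_N^2 = \frac{1}{2\pi}\int_{\dT}\|N(\xi)p(\xi)\|^2\,d\xi$, quotienting out its null space. Multiplication by $e^{i\theta}$ extends to an isometry $S$ on $\cH_N$, whose Wold decomposition yields $\cH_N = \cH_u \oplus \cH_s$ with $S\uphar\cH_u$ unitary and $S\uphar\cH_s$ a unilateral shift with wandering subspace $\sK := \cH_s \ominus S\cH_s$; expanding $f\in\cH_s$ as $f = \sum_{n\ge 0} S^n k_n(f)$ with $k_n(f):=P_\sK S^{*n}f\in\sK$ identifies $\cH_s$ isometrically with $H^2(\sK)$ and $S$ with multiplication by $\lambda$.

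Next I would extract $\varphi$ from the constant embedding $\iota : \sM \to \cH_N$, $\iota m = m$, by setting $\varphi(\lambda) m := \sum_{n\ge 0}\lambda^n P_\sK S^{*n} P_{\cH_s}\iota(m)$. Under the $\cH_s \cong H^2(\sK)$ identification, the induced multiplication operator $M_\varphi : H^2(\sM) \to H^2(\sK)$ factors as $H^2(\sM) \hookrightarrow \cH_N \xrightarrow{P_{\cH_s}} \cH_s$, and both arrows are contractions (the embedding because $\int\|Np\|^2\le\int\|p\|^2$), so $M_\varphi$ is a contraction; since the multiplier norm of an analytic operator function coincides with its $H^\infty$ norm, this forces $\varphi\in{\bf S}(\sM,\sK)$. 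Outerness is automatic: $\iota(\sM)$ is $S$-cyclic in $\cH_N$, so its projection $P_{\cH_s}\iota(\sM)$ is $S$-cyclic in $\cH_s$, which under the identification means $\varphi\cdot H^2(\sM)$ is dense in $H^2(\sK)$ (the Beurling--Lax criterion for outer).

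To check (i), for any $\sM$-valued trigonometric polynomial $p$ the contractive bound gives $\int\|\varphi(\xi)p(\xi)\|^2d\xi \le \|p\|_N^2 = \int\|N(\xi)p(\xi)\|^2d\xi$; specialising to $p(\xi)=\chi_E(\xi)m$ and shrinking $E$ to a common Lebesgue point of $\varphi^*\varphi$ and $N^2$ yields $\|\varphi(\xi)m\|^2\le\|N(\xi)m\|^2$ a.e., which, via a countable dense set of $m\in\sM$, produces $\varphi^*(\xi)\varphi(\xi)\le N^2(\xi)$ a.e. For (ii), given $\wt\varphi\in{\bf S}(\sM,\wt\sK)$ with $\wt\varphi^*\wt\varphi\le N^2$, multiplication by $\wt\varphi$ sends $\sM$-valued polynomials contractively into $H^2(\wt\sK)$ and thus extends to a contraction $T:\cH_N\to H^2(\wt\sK)$ intertwining the two shifts. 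Since $H^2(\wt\sK)$ has no unitary summand, $T$ annihilates $\cH_u$ and descends to a shift-intertwining contraction $\cH_s\to H^2(\wt\sK)$, i.e.\ multiplication by some $\Psi\in{\bf S}(\sK,\wt\sK)$; comparing images of $\iota(m)$ gives $\wt\varphi=\Psi\varphi$, hence $\wt\varphi^*\wt\varphi=\varphi^*\Psi^*\Psi\varphi\le\varphi^*\varphi$ a.e. Uniqueness of $\varphi$ up to a left constant unitary factor then follows by running the same argument symmetrically between two maximal candidates.

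The step I expect to be the main obstacle is the careful passage from the integrated inequality $\int\|\varphi h\|^2\le\int\|Nh\|^2$ to the pointwise operator inequality $\varphi^*(\xi)\varphi(\xi)\le N^2(\xi)$ a.e., which requires a Lebesgue-differentiation argument applied simultaneously to two operator-valued functions; together with verifying that the abstract Wold decomposition of $\cH_N$ really delivers an \emph{analytic} operator-valued symbol compatible with the weighted integral structure, rather than only a formal power series in $\sK$.
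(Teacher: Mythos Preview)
The paper does not prove this statement at all: it is quoted verbatim as \cite[Proposition V.4.2]{SF} and used as a black box, so there is no ``paper's own proof'' to compare against. Your construction --- building the weighted space $\cH_N$, taking the Wold decomposition of the shift, and reading off $\varphi$ from the wandering subspace --- is in fact essentially the Sz.-Nagy--Foias argument that the paper is citing, so in that sense your approach matches the intended one.

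On the substance of your sketch: the only place that needs a little more care is exactly the one you flagged. The inequality $\int\|\varphi p\|^2\le\int\|Np\|^2$ is first obtained only for \emph{analytic} polynomials $p$, so you cannot literally plug in $p=\chi_E m$. The fix is standard: since $|\xi|=1$ on $\dT$, multiplying $p$ by $\bar\xi^{\,k}$ leaves both integrands unchanged pointwise, so the inequality propagates from analytic polynomials to all trigonometric $\sM$-valued polynomials, and then by density to all of $L^2(\dT,\sM)$; after that your Lebesgue-point localisation goes through. The rest --- outerness via cyclicity of $\iota(\sM)$, and the factorisation $\wt\varphi=\Psi\varphi$ via a shift-intertwining contraction that must kill the unitary summand $\cH_u$ --- is correct as stated.
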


Assume that $\Theta(\lambda)\in {\bf S}(\sM,\sN)$ and denote by
$\varphi_\Theta(\xi)$ and $\psi_\Theta(\xi)$, $\xi \in \dT$ the
outer functions which are solutions of the factorization problem
described in Theorem \ref{Fact} for
$N^2(\xi)=I_\sM-\Theta^*(\xi)\Theta(\xi)$ and
$N^2(\bar\xi)=I_\sN-\Theta(\bar\xi)\Theta^*(\bar\xi)$, respectively.
Clearly, if $\Theta(\lambda)$ is inner or co-inner, then
$\varphi_\Theta=0$ or $\psi_\Theta=0$, respectively.  The functions
$\f_\Theta(\lambda)$ and $\psi_\Theta(\lambda)$ are called the right
and left \textit{defect functions} (or the \textit{spectral
factors}), respectively, associated with $\Theta(\lambda)$; cf.
\cite{BC}, \cite{BD}, \cite{BDFK1}, \cite{BDFK2}, \cite{DR}.
 The following result has been established in  \cite[Theorem 1.1, Corollary 1]{DR}
(see also \cite[Theorem 3]{BDFK1}, \cite[Theorem 1.5]{BDFK2}).
\begin{theorem}
\label{DBR} Let $\Theta(\lambda)\in {\bf S}(\sM,\sN)$ and let
\[
\tau=\left\{\begin{bmatrix} D&C \cr
B&A\end{bmatrix};\sM,\sN,\sH\right\}
\]
be a simple conservative system with transfer function $\Theta$.
Then
\begin{enumerate}
\item the functions $\varphi_\Theta(\lambda)$ and $\psi_\Theta(\lambda)$
take the form
\[
\begin{split}
& \varphi_\Theta(\lambda)=P_\Omega(I_\sH-\lambda A)^{-1}B,\\
&\psi_\Theta(\lambda)=C(I_\sH-\lambda A)^{-1}\uphar\Omega_*,
\end{split}
\]
where
\[
\Omega=(\sH^o)^\perp\ominus A(\sH^o)^\perp ,\;
\Omega_*=(\sH^c)^\perp\ominus A^*(\sH^c)^\perp
\]
and $P_\Omega$ is the orthogonal projector from $\sH$ onto $\Omega$;
\item
$\varphi_\Theta(\lambda)=0$ ($\psi_\Theta(\lambda)=0$) if and only
if the system $\tau$ is observable (controllable).
\end{enumerate}
\end{theorem}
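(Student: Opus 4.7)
The plan is to split the argument into three logically independent pieces: (a) use conservativity of $\tau$ to rewrite the unobservable and uncontrollable subspaces in terms of the defect-power kernels studied in Section~2, then identify $\Omega,\Omega_*$ as wandering subspaces of maximal shifts inside $A$ and $A^*$; (b) derive the explicit formula for $\varphi_\Theta,\psi_\Theta$ by boundary-value analysis of $I-\Theta^*\Theta$ and a shift-space extremality argument; (c) deduce item (ii) from (i) plus simplicity.

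First I would exploit the unitarity of $T_\tau$. From $T_\tau^*T_\tau=I$ one reads off $C^*C=I-A^*A=D_A^2$ and $D^*C+B^*A=0$; from $T_\tau T_\tau^*=I$ one gets $BB^*=D_{A^*}^2$ and $DB^*+CA^*=0$. Then $\|CA^nh\|^2=\|D_AA^nh\|^2=\|A^nh\|^2-\|A^{n+1}h\|^2$, so $h\in\bigcap_{n\ge 0}\ker(CA^n)$ if and only if $\|A^nh\|=\|h\|$ for all $n$, i.e.\ $h\in\bigcap_{n\ge1}\ker D_{A^n}$. Combined with \eqref{eqco}, this gives $(\sH^o)^\perp=\bigcap_{n\ge1}\ker D_{A^n}$ and $(\sH^c)^\perp=\bigcap_{n\ge1}\ker D_{A^{*n}}$. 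By the discussion around \eqref{perp}--\eqref{SHTCOSHT}, these subspaces are invariant under $A$ and $A^*$ respectively, with the restrictions being (possibly trivial) unilateral shifts. Therefore $\Omega=(\sH^o)^\perp\ominus A(\sH^o)^\perp$ is the generating subspace of the shift $A\uphar(\sH^o)^\perp$, and likewise $\Omega_*$ for $A^*\uphar(\sH^c)^\perp$, giving the orthogonal sum $(\sH^o)^\perp=\bigoplus_{n\ge0}A^n\Omega$.

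Next, using the four unitarity identities of the previous paragraph, a direct computation (substituting $D^*D=I-B^*B$, $D^*C=-B^*A$, $C^*C=I-A^*A$, and the resolvent identity $\lambda AW(\lambda)=W(\lambda)-I$ with $W(\lambda)=(I-\lambda A)^{-1}$) gives
\[
I_\sM-\Theta^*(\lambda)\Theta(\lambda)=(1-|\lambda|^2)\,B^*(I-\bar\lambda A^*)^{-1}(I-\lambda A)^{-1}B,\quad\lambda\in\dD.
\]
The candidate defect function is $\varphi_0(\lambda):=P_\Omega(I-\lambda A)^{-1}B$. Because $\Omega\subset(\sH^o)^\perp$ is wandering for the isometry $A\uphar(\sH^o)^\perp$, the power series $\sum_{n\ge0}\lambda^n P_\Omega A^nB\xi$ defines an $H^2(\Omega)$-valued function whose non-tangential boundary values satisfy $\varphi_0^*(\xi)\varphi_0(\xi)\le I_\sM-\Theta^*(\xi)\Theta(\xi)$ a.e.\ on $\dT$; this is the content of a standard Parseval-type inequality applied to the projection of $W(\lambda)B$ onto the shift-part of the state space. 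Outerness of $\varphi_0$ then follows from the identity $\cspan\{\lambda^n P_\Omega A^nB\sM:\,n\ge0,\lambda\in\dD\}=\Omega$, which holds because simplicity of $\tau$, together with $\Omega\perp\sH^o$ and $\Omega\subset(\sH^o)^\perp$, forces $\Omega$ to lie in the closure of $\sH^c$, and $P_\Omega$ applied to $\sH^c=\cspan\{A^nB\sM\}$ yields a dense subset of $\Omega$.

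The technically hardest step, which I would treat last, is the extremality of $\varphi_0$ (condition (ii) of Theorem~\ref{Fact}): any competing outer $\wt\varphi\in{\bf S}(\sM,\wt\sK)$ with $\wt\varphi^*\wt\varphi\le I_\sM-\Theta^*\Theta$ must satisfy $\wt\varphi^*\wt\varphi\le\varphi_0^*\varphi_0$ a.e.\ on $\dT$. Here I would invoke the de~Branges--Rovnyak / Sz.-Nagy--Foias functional model: up to unitary equivalence the state space of a simple conservative realization is precisely a model space built from $\Theta$, in which the shift-part of $A$ on $(\sH^o)^\perp$ corresponds to multiplication by $\lambda$ on an $H^2$-type space of size $\dim\Omega$, and $\varphi_0$ is the resulting spectral factor; the maximality of the associated outer function is then a theorem of Sz.-Nagy--Foias. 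The assertion for $\psi_\Theta$ is obtained by replacing $\tau$ with the dual conservative system $\tau^*=\{[D^*\,B^*;C^*\,A^*];\sN,\sM,\sH\}$ whose transfer function is $\Theta^*(\bar\lambda)$, so $\psi_\Theta=\varphi_{\Theta^*}$. Finally, item (ii): if $\tau$ is observable then $(\sH^o)^\perp=\{0\}$, hence $\Omega=\{0\}$ and $\varphi_\Theta=0$; conversely, if $\varphi_0\equiv0$ but $\tau$ is not observable, then $(\sH^o)^\perp\ne\{0\}$ forces $\Omega\ne\{0\}$ (a unilateral shift has nontrivial wandering subspace), while $\varphi_0\equiv0$ gives $P_\Omega A^nB=0$ for all $n$, i.e.\ $\Omega\perp\sH^c$; combined with $\Omega\perp\sH^o$ this contradicts simplicity $\sH=\clos\{\sH^c+\sH^o\}$. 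The main obstacle is the extremality step, which fundamentally requires the functional-model identification rather than the algebraic computations used elsewhere.
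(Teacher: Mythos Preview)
The paper does not contain a proof of Theorem~\ref{DBR}. Immediately before the statement the author writes ``The following result has been established in \cite[Theorem 1.1, Corollary 1]{DR} (see also \cite[Theorem 3]{BDFK1}, \cite[Theorem 1.5]{BDFK2})'', and the theorem is then quoted without argument and used as a black box (in the proof of Corollary~\ref{zero}). So there is no ``paper's own proof'' to compare your proposal against; what you have produced is a sketch of an independent proof of a result the paper simply imports from the literature.

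As a standalone outline your argument is structurally sound and matches the standard route in \cite{DR}, \cite{BDFK1}, \cite{BDFK2}. Step~(a) is exactly what the paper records in \eqref{SP}, and your derivation of part~(ii) from simplicity and the nontriviality of the wandering subspace of a unilateral shift is correct. In step~(b) the identity $I_\sM-\Theta^*(\lambda)\Theta(\lambda)=(1-|\lambda|^2)B^*(I-\bar\lambda A^*)^{-1}(I-\lambda A)^{-1}B$ and the outerness argument via $P_\Omega\sH^c$ being dense in $\Omega$ are the right ingredients. The one place where your sketch is genuinely incomplete is precisely where you flag it: the extremality of $\varphi_0$ in the sense of Theorem~\ref{Fact}. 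Saying that ``the maximality of the associated outer function is then a theorem of Sz.-Nagy--Foias'' is not yet a proof; in the cited references this step is carried out by an explicit computation showing that any competitor $\wt\varphi$ with $\wt\varphi^*\wt\varphi\le I-\Theta^*\Theta$ a.e.\ factors through the shift part of the state space, which forces $\wt\varphi^*\wt\varphi\le\varphi_0^*\varphi_0$. If you want a self-contained argument you will need to supply that comparison rather than defer to the model; otherwise your proposal is a correct reconstruction of the proof strategy of \cite{DR}.
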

The defect functions play an essential role in the problems of the
system theory, in particular, in the problem of similarity and
unitary similarity of the minimal passive systems with equal
transfer functions \cite{ArNu1}, \cite{ArNu2} and in the problem of
\textit{optimal} and $(*)$ \textit{optimal} realizations of the
Schur function \cite{Arov}, \cite{ArKaaP}.

\subsection{Parametrization of contractive block-operator matrices}
Let $\sH,\,\sK,$ $\sM$ and $\sN$ be Hilbert spaces. The following
theorem goes back to \cite{AG}, \cite{DaKaWe}, \cite {ShYa}; other
proofs of the theorem can be found in \cite{KolMal}, \cite{Mal2},
\cite{ARL}, \cite{AHS1}.
\begin{theorem} \label{ParContr1}
Let $A \in \bL(\sH,\sK)$, $B \in \bL(\sM,\sK)$, $C \in
\bL(\sH,\sN)$, and $D \in \bL(\sM,\sN)$. The operator matrix
\[
T= \begin{bmatrix} D&C\cr B&A \end{bmatrix}:
\begin{array}{l}\sM\\\oplus\\\sH\end{array}\to
\begin{array}{l}\sN\\\oplus\\\sK\end{array}
\]
is a contraction if and only if $T$ is of the form
\begin{equation}\label{twee}
T=
\begin{bmatrix} -KA^*M+D_{K^*}XD_{M} &KD_{A} \cr
D_{A^*}M&A\end{bmatrix},
\end{equation}
where $A \in \bL(\sH,\sK)$, $M\in\bL(\sM,\sD_{A^*})$,
$K\in\bL(\sD_{A},\sN)$, and $X\in\bL(\sD_{M},\sD_{K^*})$ are
contractions, all uniquely determined by $T$. Furthermore, the
following equality holds for all $h \in \sM$, $f \in \sH$:
\begin{equation} \label{Contr}
\begin{gathered}
\left\|\begin{bmatrix}h\cr f\end{bmatrix}\right\|^2
-\left\|\begin{bmatrix} -KA^*M+D_{K^*}XD_{M} &KD_{A} \cr
D_{A^*}M&A\end{bmatrix}
\begin{bmatrix}h\cr f\end{bmatrix}  \right\|^2 \\
=\|D_K(D_{A}f-A^*Mh)-K^*XD_{M}h\|^2+\|D_{X}D_{M}h\|^2.
\end{gathered}
\end{equation}
\end{theorem}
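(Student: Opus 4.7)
My strategy is a two-step reduction via Douglas's factorization lemma: first isolate the off-diagonal blocks $B$ and $C$, then identify the form of $D$. The square-length identity \eqref{Contr} will then fall out of the same bookkeeping and will simultaneously verify sufficiency.

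For necessity, suppose $T$ is a contraction. Contractivity of the block row $[B\ A]:\sM\oplus\sH\to\sK$ forces $BB^*\le I_\sK-AA^*=D_{A^*}^2$, so Douglas's lemma supplies a unique contraction $M:\sM\to\sD_{A^*}$ with $B=D_{A^*}M$; the range restriction $\ran M\subseteq\sD_{A^*}$ secures uniqueness. Symmetrically, contractivity of $\begin{bmatrix}C\\ A\end{bmatrix}:\sH\to\sN\oplus\sK$ yields $C=KD_A$ for a unique contraction $K:\sD_A\to\sN$.

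Substituting these forms and using the intertwining relation $D_{A^*}A=AD_A$ together with $\|Af\|^2=\|f\|^2-\|D_Af\|^2$ and $\|Mh\|^2=\|h\|^2-\|D_Mh\|^2$, one collects terms to obtain
\[
\|h\|^2+\|f\|^2-\left\|T\begin{bmatrix}h\\ f\end{bmatrix}\right\|^2=\|D_Mh\|^2+\|u\|^2-\|(D+KA^*M)h+Ku\|^2,
\]
where $u:=D_Af-A^*Mh\in\sD_A$ (both summands lie in $\sD_A$ because $A^*\sD_{A^*}\subseteq\sD_A$). As $f$ varies over $\sH$, $u$ traces out a dense subset of $\sD_A$, so contractivity of $T$ is equivalent to
\[
\|(D+KA^*M)h+Ku\|^2\le\|D_Mh\|^2+\|u\|^2\quad\text{for every }h\in\sM,\ u\in\sD_A.
\]
Taking $u=0$ gives $(D+KA^*M)^*(D+KA^*M)\le D_M^2$, so Douglas produces a unique contraction $\tilde Y:\sD_M\to\sN$ with $D+KA^*M=\tilde Y D_M$. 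The displayed condition now reduces to $\|\tilde Yv+Ku\|^2\le\|v\|^2+\|u\|^2$ on $\sD_M\oplus\sD_A$, i.e.\ the row $[\tilde Y\ K]$ is a contraction into $\sN$, equivalently $\tilde Y\tilde Y^*\le I_\sN-KK^*=D_{K^*}^2$. A second Douglas step yields a unique contraction $X:\sD_M\to\sD_{K^*}$ with $\tilde Y=D_{K^*}X$, hence $D=-KA^*M+D_{K^*}XD_M$.

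For the identity \eqref{Contr} and sufficiency, substitute $D=-KA^*M+D_{K^*}XD_M$ back into the displayed expansion, expand $\|D_{K^*}XD_Mh+Ku\|^2$ using $K^*D_{K^*}=D_KK^*$ together with the telescoping identity $\|Ku\|^2+\|D_Ku\|^2=\|u\|^2$, and recognise $\|D_Mh\|^2-\|XD_Mh\|^2=\|D_XD_Mh\|^2$; what drops out is precisely \eqref{Contr}. Since its right-hand side is manifestly nonnegative whenever $A,M,K,X$ are contractions, sufficiency follows at once. The only genuinely delicate point throughout is to carry the range/uniqueness bookkeeping of Douglas's lemma consistently — pinning down $\ran M\subseteq\sD_{A^*}$, $\ran K^*\subseteq\sD_A$, and $\ran X\subseteq\sD_{K^*}$ is what makes $M$, $K$, $X$ (and hence the whole parametrization) unique; the algebra itself is routine.
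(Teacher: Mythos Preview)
The paper does not actually prove this theorem; it is stated with attribution to \cite{AG}, \cite{DaKaWe}, \cite{ShYa}, and references to further proofs in \cite{KolMal}, \cite{Mal2}, \cite{ARL}, \cite{AHS1}. Your argument is correct and is essentially the standard route taken in those sources: two applications of Douglas's factorization lemma pin down $B=D_{A^*}M$ and $C=KD_A$, the norm bookkeeping reduces contractivity of $T$ to contractivity of the row $[\tilde Y\ K]$ on $\sD_M\oplus\sD_A$, and a third Douglas step produces $X$; the identity \eqref{Contr} then drops out of the same expansion and gives sufficiency for free. The only point worth flagging is the density passage: for fixed $h$ the vector $u=D_Af-A^*Mh$ need not hit $0$ exactly (since $A^*Mh$ lies in $\sD_A$ but not necessarily in $\ran D_A$), so the step ``take $u=0$'' relies on continuity in $u$ --- which you effectively invoke, but it would be cleaner to say so explicitly.
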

\begin{corollary}
\label{iscois1} Let
\[
T=
\begin{bmatrix}-KA^*M+D_{K^*}XD_{M} &KD_{A} \cr
D_{A^*}M&A\end{bmatrix}:
\begin{array}{l}\sM\\\oplus\\\sH\end{array}\to
\begin{array}{l}\sN\\\oplus\\\sK\end{array}
\]
be a contraction. Then
\begin{enumerate}
\item$T$ is isometric if
and only if
\[
D_KD_A=0,\;D_XD_M=0,
\]
\item
$T$ is co-isometric if and only if
\[
D_{M^*}D_{A^*}=0,\; D_{X^*}D_{K^*}=0.
\]
\end{enumerate}
\end{corollary}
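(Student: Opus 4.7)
The plan is to read off both characterizations directly from the defect identity \eqref{Contr}. Since $T$ is isometric precisely when the left-hand side of \eqref{Contr} vanishes identically, and the right-hand side is a sum of two nonnegative terms, $T$ is isometric if and only if
\[
D_K(D_A f - A^* M h) - K^* X D_M h = 0 \quad\text{and}\quad D_X D_M h = 0
\]
for all $h \in \sM$, $f \in \sH$. Setting $h = 0$ in the first equation forces $D_K D_A f = 0$ for every $f$, i.e.\ $D_K D_A = 0$; the second equation then forces $D_X D_M = 0$. This gives necessity in (1).

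For sufficiency of (1), assume $D_K D_A = 0$ and $D_X D_M = 0$. The first identity says $D_K$ vanishes on $\ran D_A$ and hence, by continuity of $D_K$, on $\sD_A = \cran D_A$. Therefore $K^* K = I_{\sD_A}$, so $K$ is an isometry from $\sD_A$ into $\sN$; this yields $KK^* = P_{\ran K}$, $\sD_{K^*} = \sN \ominus \ran K$, and, crucially, $K^* \uphar \sD_{K^*} = 0$. From the intertwining relation \eqref{defect} one has $A^*\sD_{A^*} \subset \sD_A$, and since $Mh \in \sD_{A^*}$ this gives $A^*Mh \in \sD_A$, whence $D_K A^* M h = 0$. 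Similarly $XD_M h \in \sD_{K^*}$ yields $K^* X D_M h = 0$. The right-hand side of \eqref{Contr} is thus identically zero, and (1) is complete.

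For (2), I would reduce to (1) by passing to $T^*$. A direct computation, using self-adjointness of $D_{K^*}$ and $D_M$, gives
\[
T^* = \begin{bmatrix} -M^* A K^* + D_M X^* D_{K^*} & M^* D_{A^*} \\ D_A K^* & A^* \end{bmatrix},
\]
which is again the parametrization \eqref{twee} for the contraction $T^*$, with substituted parameters $\wt A = A^*$, $\wt M = K^*$, $\wt K = M^*$, $\wt X = X^*$; under this substitution $D_{\wt K^*} = D_M$ and $D_{\wt M} = D_{K^*}$. Applying (1) to $T^*$ then yields that $T$ is co-isometric iff $D_{\wt K} D_{\wt A} = D_{M^*}D_{A^*} = 0$ and $D_{\wt X} D_{\wt M} = D_{X^*} D_{K^*} = 0$, which is (2). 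The only delicate point is bookkeeping of which space each defect operator acts on, so that the chain $D_K D_A = 0 \Rightarrow K$ isometric $\Rightarrow K^* \uphar \sD_{K^*} = 0$ is rigorous; beyond that, each step is a direct reading of the identity \eqref{Contr}.
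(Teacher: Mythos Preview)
Your proof is correct and matches the paper's intended approach: the corollary is stated without proof precisely because it is a direct reading of the defect identity \eqref{Contr}, and you have supplied exactly those details, including the clean reduction of (2) to (1) by passing to $T^*$.
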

Note that the relation $D_YD_Z=0$ for contractions $Y$ and $Z$ means
that either $Z$ is an isometry and $Y=0$ or $\sD_Z\ne\{0\}$ and $Y$
is an isometry. From \eqref{Contr} we get the following statement

\textit{If $T$ given by \eqref{twee} is unitary then $D_{K^*}=0$
$\iff$ $D_{M}=0$.}

Let $\tau=\left\{\begin{bmatrix} D&C \cr
B&A\end{bmatrix};\sM,\sN,\sH\right\}$ be a conservative system. Then
from Corollary \ref{iscois1} we get
\begin{equation}
\label{SP}
\begin{split}
&(\sH^{c})^\perp=\bigcap\limits_{n\ge
0}\ker(D_{A^*}A^{*n})=\bigcap\limits_{n\ge 1}\ker(D_{A^{*n}}),\\
& (\sH^{o})^\perp=\bigcap\limits_{n\ge
0}\ker(D_{A}A^n)=\bigcap\limits_{n\ge 1}\ker(D_{A^n}),
\end{split}
\end{equation}
\[
\begin{array}{l}
\tau \;\mbox{is controllable}\iff \bigcap\limits_{n\ge
1}\ker(D_{A^{*n}})=\{0\}\iff\mbox{the operator}\; A^*\;\mbox{does
not
contain a shift},\\
\tau \;\mbox{is observable}\iff \bigcap\limits_{n\ge
1}\ker(D_{A^{n}})=\{0\}\iff\mbox{the operator}\; A\;\mbox{does not
contain a shift}.
\end{array}
\]
It follows that a conservative system is simple if and only if the
state space operator is completely non-unitary \cite{Br1}.

In \cite{AHS1} we used Theorem \ref{ParContr1} for connections
between the passive system
$$\tau=\left\{\begin{bmatrix}D&C\cr
B&A\end{bmatrix};\sM,\sN,\sH\right\},$$
 its transfer function
$\Theta_\tau(\lambda)$, and the characteristic function of $A$. In
particular, an immediate consequence of \eqref{twee} is the
following relation
\begin{equation}
\label{exptran}
 \Theta_\tau(\lambda)=K\Phi_{A^*}(\lambda)M+D_{K^*}XD_{M},\quad
\lambda\in\dD,
\end{equation}
where $\Phi_{A^*}(\lambda)$ is the characteristic function of $A^*$.

Recall that if $\Theta(\lambda)\in{\bf S}(\sH_1,\sH_2)$ then there
is a uniquely determined decomposition \cite[Proposition V.2.1]{SF}
\[
\Theta(\lambda)=\begin{bmatrix}\Theta_p(\lambda)&0\cr
0&\Theta_u\end{bmatrix}:\begin{array}{l}\sD_{\Theta(0)}\\\oplus\\\ker
D_{\Theta(0)}\end{array}\to \begin{array}{l}
 \sD_{\Theta^*(0)}\\\oplus\\\ker D_{\Theta^*(0)}\end{array},
\]
where $\Theta_p(\lambda)\in{\bf
S}(\sD_{\Theta(0)},\sD_{\Theta^*(0)})$,  $\Theta_p(0)$ is a pure
contraction and $\Theta_u$ is a unitary constant. The function
$\Theta_p(\lambda)$ is called the \textit{pure part} of
$\Theta(\lambda)$ (see {\cite{BC}). If $\Theta(0)$ is isometric
(co-isometric) then the pure part is of the form
$\Theta_p(\lambda)=0\in {\bf S}(\{0\},\sD_{\Theta^*(0)})$
($\Theta_p(\lambda)=0\in {\bf S}(\sD_{\Theta(0)}, \{0\})$).

From \eqref{twee} and \eqref{exptran} we get the following
statement.
\begin{proposition}
\label{PURE} Let
$$\tau=\left\{\begin{bmatrix}D&C\cr
B&A\end{bmatrix};\sM,\sN,\sH\right\}
$$
be a a simple conservative system and let $\Theta(\lambda)$ be its
transfer function. Then
\begin{equation}\label{defnumb}
\begin{array}{l}
\dim\sD_A=\dim\sD_{\Theta^*(0)}=\dim(\sN\ominus\ker
C^*),\\
\dim\sD_{A^*}=\dim\sD_{\Theta(0)}=\dim(\sM\ominus\ker B),
\end{array}
\end{equation}
and the pure part of $\Theta$ coincides with the Sz.-Nagy--Foias
characteristic function of $A^*$.

In addition

1) if $\Theta(0)$ is isometric then $B=0$, $A$ is a co-shift of
multiplicity $\dim\sD_{\Theta^*(0)},$ and the system $\tau$ is
observable;

2) if $\Theta(0)$ is co-isometric then $C=0$, $A$ is a unilateral
shift of multiplicity $\dim\sD_{\Theta(0)}$, and the system $\tau$
is controllable.
\end{proposition}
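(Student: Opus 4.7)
My plan is to exploit the parametrization from Theorem~\ref{ParContr1} and the transfer-function identity \eqref{exptran}. Writing $T_\tau$ in the form
\[
T_\tau=\begin{bmatrix}-KA^*M+D_{K^*}XD_M & KD_A\\ D_{A^*}M & A\end{bmatrix}
\]
with contractions $M\in\bL(\sM,\sD_{A^*})$, $K\in\bL(\sD_A,\sN)$, $X\in\bL(\sD_M,\sD_{K^*})$, I apply both conclusions of Corollary~\ref{iscois1} (which hold simultaneously when $T_\tau$ is unitary). In the nondegenerate case, this forces $K$ to be isometric, $M$ to be co-isometric, and $X:\sD_M\to\sD_{K^*}$ to be unitary; consequently $D_{K^*}$ and $D_M$ are orthogonal projections onto $\ker K^*=\sN\ominus K\sD_A$ and $\ker M$ respectively. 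The cases $\sD_A=\{0\}$ or $\sD_{A^*}=\{0\}$ reduce $\Theta$ to a constant, where the proposition can be verified directly.

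Splitting $\sM=(\ker M)^\perp\oplus\ker M$ and $\sN=K\sD_A\oplus\ker K^*$ and inserting these decompositions into \eqref{exptran} produces the block-diagonal form
\[
\Theta(\lambda)=\begin{bmatrix} K\,\Phi_{A^*}(\lambda)\,M\uphar(\ker M)^\perp & 0\\ 0 & X\end{bmatrix}.
\]
The upper-left block is unitarily equivalent to $\Phi_{A^*}(\lambda)$ via the isometries $K$ and $M\uphar(\ker M)^\perp$, and its value at $\lambda=0$ is a pure contraction since $\Phi_{A^*}(0)$ is; the lower-right block $X$ is a unitary constant. This is exactly the canonical pure/unitary decomposition of $\Theta$, which identifies $\Theta_p$ with $\Phi_{A^*}$, gives $\sD_{\Theta(0)}=(\ker M)^\perp$ and $\sD_{\Theta^*(0)}=K\sD_A$, and yields the identities in \eqref{defnumb}: since $D_{A^*}$ is injective on $\sD_{A^*}$, one has $\ker B=\ker(D_{A^*}M)=\ker M$, so $\sM\ominus\ker B=(\ker M)^\perp$ has dimension $\dim\sD_{A^*}$, and the analogous computation for $C^*=D_AK^*$ gives the second identity.

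For the additional items, if $\Theta(0)$ is isometric then the unitary block $X$ exhausts $\Theta(0)$, so the pure block is trivial; hence $(\ker M)^\perp=\{0\}$, giving $\sD_{A^*}=\{0\}$ and $B=D_{A^*}M=0$. Thus $A$ is co-isometric, and simplicity makes $A^*$ a completely non-unitary isometry, i.e., a unilateral shift of multiplicity $\dim\sD_A=\dim\sD_{\Theta^*(0)}$. Observability then follows from \eqref{SP}, since $\bigcap_{n\ge 1}\ker D_{A^n}=\bigcap_{n\ge 1}\ran A^{*n}=\{0\}$ for a pure co-shift. The co-isometric case is entirely symmetric, producing $C=0$ and exhibiting $A$ as a unilateral shift of multiplicity $\dim\sD_{\Theta(0)}$ with controllable $\tau$.

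The principal technical point I anticipate is verifying rigorously that the block-diagonal presentation above is the canonical Sz.-Nagy--Foias pure/unitary decomposition of $\Theta$ -- in particular, that the upper-left block really coincides with $\Phi_{A^*}$ in the sense of Sz.-Nagy--Foias and that its value at $0$ is pure. Extracting the unitary character of $X$ from the simultaneous application of the two halves of Corollary~\ref{iscois1} (together with the remark that for unitary $T_\tau$ one has $D_{K^*}=0\iff D_M=0$) should be the main bookkeeping step.
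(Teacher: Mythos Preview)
Your proposal is correct and follows essentially the same route as the paper: both arguments parametrize $T_\tau$ via Theorem~\ref{ParContr1}, use unitarity to force $K$, $M^*$ isometric and $X$ unitary, and then read off the pure/unitary decomposition of $\Theta$ from the identity \eqref{exptran}. The only cosmetic difference is that you invoke Corollary~\ref{iscois1} explicitly to obtain the isometry of $K$ and $M^*$, whereas the paper deduces this directly from the defect identity \eqref{Contr}; the treatment of the isometric/co-isometric cases is likewise identical in substance.
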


\begin{proof}
According to Theorem \ref{ParContr1} the operator
\[
T=\begin{bmatrix}D&C\cr B&A\end{bmatrix}:
\begin{array}{l}\sM\\\oplus\\\sH\end{array}\to
\begin{array}{l}\sN\\\oplus\\\sH\end{array}
\]
takes the form \eqref{twee}. Since $T$ is unitary, from
\eqref{contr} we get that the operators $K\in\bL(\sD_A,\sN)$ and
$M^*\in\bL(\sD_{A^*},\sM)$ are isometries and the operator $X\in
\bL(\sD_{M},\sD_{K^*})$ is unitary. From \eqref{exptran} it follows
that the pure part of $\Theta$ is given by
\[
\Theta(\lambda)\uphar\ran{M^*}=K\Phi_{A^*}(\lambda)M\uphar\ran
M^*:\ran M^*\to\ran K.
\] Thus, the pure part
 of $\Theta$ coincides with $\Phi_{A^*}$.
Since $\ran M^*=\sD_{A^*}$, $\ran K^*=\sD_{A}$,
\[
\begin{array}{l}
D=\Theta(0)=K\Phi_{A^*}(0)M^*=-KA^*M^*,\;D^*=\Theta^*(0)=-MAK^*,\\
\ran K=\sN\ominus\ker K^*=\sN\ominus\ker C^*,\\
\ran M^*=\sM\ominus\ker M=\sM\ominus\ker B,
\end{array}
\]
 we get \eqref{defnumb}.

Suppose $D=\Theta(0)$ is an isometry. Then the pure part of $\Theta$
is $0\in{\bf S}(\{0\},\sD_{D^*})$. It follows that $M=B=0$ and
$\sD_{A^*}=\{0\}$. Hence, $A$ is co-isometric and since $A$ is a
completely non-unitary contraction, it is a co-shift of multiplicity
$\dim\sD_{A}=\dim\sD_{\Theta^*(0)}$, and the system $\tau$ is
observable. Similarly the statement 2) holds.
\end{proof}

In this paper we will use a parametrization of a contractive block-
operator matrix based on a fixed upper left block
$D\in\bL(\sM,\sN)$. With this aim we reformulate Theorem
\ref{ParContr1} and Corollary \ref{iscois1}.
\begin{theorem}
\label{ParContr}The operator matrix
\[
T= \begin{bmatrix} D&C\cr B&A \end{bmatrix}:
\begin{array}{l}\sM\\\oplus\\\sH\end{array}\to
\begin{array}{l}\sN\\\oplus\\\sK\end{array}
\]
is a contraction if and only if $D\in\bL(\sM,\sN)$ is a contraction
and the entries $A$,$B$, and $C$ take the form
\begin{equation}
\label{BLOCKS}
\begin{split}
&
B=FD_D,\; C=D_{D^*}G,\\
& A=-FD^*G+D_{F^*}LD_{G},
\end{split}
\end{equation}
where the operators $F\in\bL(\sD_D,\sK)$, $G\in\bL(\sH,\sD_{D^*})$
and $L\in\bL(\sD_{G},\sD_{F^*})$ are contractions. Moreover,
operators $F,\,G,$ and $L$ are uniquely determined. Furthermore,the
following equality holds
\begin{equation}
\label{contr}
\begin{array}{l}
\left\|D_T\begin{bmatrix}h\cr f\end{bmatrix}\right\|^2=
||D_F\left(D_Dh-D^*Gf\right)-F^*LD_{G}f||^2+||D_{L}D_{G}f||^2,\\
\qquad  h\in\sM,\; f\in\sH
\end{array}
\end{equation}
and
\begin{equation}
\label{*contr}
\begin{array}{l}
\left\|D_{T^*}\begin{bmatrix}\f\cr g\end{bmatrix}\right\|^2=
||D_{G^*}\left(D_{D^*}\f-DF^*g\right)-GL^*D_{F^*}g||^2+||D_{L^*}D_{F^*}g||^2,\\
\qquad  \f\in\sN,\;g\in\sK.
\end{array}
\end{equation}
\begin{enumerate}
\item the operator $T$ is isometric if
and only if
\[
D_FD_D=0,\; D_LD_G=0,
\]
\item the operator
$T$ is co-isometric if and only if
\[
D_{G^*}D_{D^*}=0,\; D_{L^*}D_{F^*}=0,
\]
\item if $T$ is unitary then $D_{F^*}=0\iff D_G=0$.
\end{enumerate}
\end{theorem}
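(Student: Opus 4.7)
The plan is to derive Theorem \ref{ParContr} from Theorem \ref{ParContr1} and Corollary \ref{iscois1} by a block rearrangement that moves $D$ from the upper-left to the lower-right corner. Introduce the flip unitaries $J_1:\sM\oplus\sH\to\sH\oplus\sM$ and $J_2:\sN\oplus\sK\to\sK\oplus\sN$ defined by $J_1\begin{bmatrix}h\\f\end{bmatrix}=\begin{bmatrix}f\\h\end{bmatrix}$ and $J_2\begin{bmatrix}\f\\g\end{bmatrix}=\begin{bmatrix}g\\\f\end{bmatrix}$, and set $\wt T:=J_2TJ_1^{*}=\begin{bmatrix}A&B\\C&D\end{bmatrix}:\sH\oplus\sM\to\sK\oplus\sN$. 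Because $J_1,J_2$ are unitary, $T$ is contractive (resp.\ isometric, co-isometric, unitary) if and only if $\wt T$ is, and $\|D_T\xi\|=\|D_{\wt T}J_1\xi\|$ for every $\xi\in\sM\oplus\sH$.

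Applying Theorem \ref{ParContr1} to $\wt T$ with $D$ in the lower-right position produces unique contractions $K\in\bL(\sD_D,\sK)$, $M\in\bL(\sH,\sD_{D^*})$, and $X\in\bL(\sD_M,\sD_{K^*})$ such that
\[
B=KD_D,\quad C=D_{D^*}M,\quad A=-KD^*M+D_{K^*}XD_M;
\]
renaming $F:=K$, $G:=M$, $L:=X$ yields \eqref{BLOCKS}. Identity \eqref{contr} is then obtained by applying \eqref{Contr} to $\wt T$ at the vector $J_1\begin{bmatrix}h\\f\end{bmatrix}=\begin{bmatrix}f\\h\end{bmatrix}$ and substituting $(K,M,X)\mapsto(F,G,L)$. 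The isometric and co-isometric characterizations (1) and (2) follow from Corollary \ref{iscois1} applied to $\wt T$ with the same renaming, and the unitary implication (3) is the remark immediately following Corollary \ref{iscois1}, translated into the new notation.

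For \eqref{*contr} I would argue by symmetry. The adjoint $T^*=\begin{bmatrix}D^*&B^*\\C^*&A^*\end{bmatrix}$ has upper-left block $D^*$ and so fits the hypothesis of the already-proved parametrization. A direct calculation from $B^*=D_DF^*$, $C^*=G^*D_{D^*}$, and $A^*=-G^*DF^*+D_GL^*D_{F^*}$ identifies the parameter triple for $T^*$ as $(F_{T^*},G_{T^*},L_{T^*})=(G^*,F^*,L^*)$. Substituting these into \eqref{contr} applied to $T^*$ immediately produces \eqref{*contr}. The main obstacle, though modest, is precisely this bookkeeping for $T^*$: one must carefully track how the three auxiliary contractions transform under the adjoint and confirm that the swap $D\leftrightarrow D^*$ is consistent throughout the resulting formula. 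Once this is verified, the whole theorem is a straightforward translation of Theorem \ref{ParContr1} and Corollary \ref{iscois1} through the fixed unitary flip.
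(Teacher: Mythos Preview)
Your proposal is correct and is precisely the reformulation the paper has in mind: the paper gives no separate proof of Theorem~\ref{ParContr}, stating it explicitly as a reformulation of Theorem~\ref{ParContr1} and Corollary~\ref{iscois1}, and your flip-unitary argument is exactly the mechanism that effects this translation. The adjoint bookkeeping you flag for \eqref{*contr} works just as you describe, with $(F_{T^*},G_{T^*},L_{T^*})=(G^*,F^*,L^*)$.
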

Let us give connections between the parametrization of a unitary
block-operator matrix $T$ given by \eqref{twee} and \eqref{BLOCKS}.
\begin{proposition}
\label{uncion} Let
\[
\begin{array}{l}
T=
\begin{bmatrix} -KA^*M+D_{K^*}XD_{M}& KD_{A}\cr
D_{A^*}M
&A\end{bmatrix}=\\
\qquad\quad=\begin{bmatrix}D&D_{D^*}G\cr FD_D
&-FD^*G+D_{F^*}LD_{G}\end{bmatrix}:
\begin{array}{l}\sM\\\oplus\\\sH\end{array}\to
\begin{array}{l}\sN\\\oplus\\\sH\end{array}
\end{array}
\]
be a unitary operator matrix. Then
\[
\sD_D=\ran M^*,\; \sD_{D^*}=\ran K,
\]
\[
F^*=M^*P_{\sD_{A^*}},\; G=KP_{\sD_{A}},
\]
\[
GFf=KP_{\sD_{A}}M f,\; f\in\sD_D,
\]
\[L=A\uphar\ker D_A.
\]
\end{proposition}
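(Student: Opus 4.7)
The plan is to use the unitarity of $T$ to first identify the defect subspaces $\sD_D$ and $\sD_{D^*}$, then to derive an intertwining relation between $K$ and the defect operators, and finally to read off $F$, $G$, and $L$ directly from the block formulas. Since $T$ is unitary, Corollary~\ref{iscois1} (applied to both the isometric and co-isometric conditions) forces $K^*K=I_{\sD_A}$, $MM^*=I_{\sD_{A^*}}$, and $X\in\bL(\sD_M,\sD_{K^*})$ unitary. From $T^*T=I$ one has $B^*B=D_D^2$; substituting $B=D_{A^*}M$ and using that $D_{A^*}$ is injective on $\sD_{A^*}=\cran D_{A^*}$ while $\ran M\subset\sD_{A^*}$, I conclude $\ker D_D=\ker M$, whence $\sD_D=\sM\ominus\ker M=\ran M^*$ (closed since $M^*$ is isometric). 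The symmetric argument starting from $CC^*=D_{D^*}^2$ yields $\sD_{D^*}=\ran K$.

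The key step is the intertwining $D_{D^*}K=KD_A$ on $\sD_A$. For $\zeta\in\sD_A$, I expand $D^*K\zeta=(-M^*AK^*+D_MX^*D_{K^*})K\zeta$; the second term vanishes because $D_{K^*}K=KD_K=0$ (as $K$ is isometric), giving $D^*K\zeta=-M^*A\zeta$. I then compute $DD^*K\zeta$ by expanding $D=-KA^*M+D_{K^*}XD_M$ and using both $MM^*A\zeta=A\zeta$ (valid because $A\sD_A\subset\sD_{A^*}$ and $MM^*=I$ on $\sD_{A^*}$) and $D_MM^*=M^*D_{M^*}=0$ on $\sD_{A^*}$, to obtain $DD^*K\zeta=KA^*A\zeta$; hence $D_{D^*}^2K\zeta=KD_A^2\zeta$ on $\sD_A$. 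Since by the previous step $K|_{\sD_A}$ is unitary from $\sD_A$ onto $\sD_{D^*}$, this identity conjugates $D_A^2|_{\sD_A}$ to $D_{D^*}^2|_{\sD_{D^*}}$, and uniqueness of the positive square root of a non-negative self-adjoint operator yields $D_{D^*}K=KD_A$ on $\sD_A$. The symmetric calculation gives $D_DM^*=M^*D_{A^*}$ on $\sD_{A^*}$.

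With the intertwinings in hand, $C=KD_A=D_{D^*}KP_{\sD_A}$ (using $D_A=D_AP_{\sD_A}$); comparing with $C=D_{D^*}G$ and using the injectivity of $D_{D^*}$ on $\sD_{D^*}$ forces $G=KP_{\sD_A}$. The same reasoning applied to $B^*=M^*D_{A^*}=D_DM^*P_{\sD_{A^*}}$ gives $F^*=M^*P_{\sD_{A^*}}$. Taking adjoints and using $\ran M\subset\sD_{A^*}$, I obtain $Ff=Mf$ for $f\in\sD_D$, whence $GFf=KP_{\sD_A}Mf$.

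For $L$: from $G=KP_{\sD_A}$ one gets $G^*G=P_{\sD_A}K^*KP_{\sD_A}=P_{\sD_A}$, so $D_G^2=I-P_{\sD_A}=P_{\ker D_A}$ and $\sD_G=\ker D_A$; symmetrically $FF^*=P_{\sD_{A^*}}$ and $\sD_{F^*}=\ker D_{A^*}$. For $f\in\ker D_A$ one has $Gf=0$ (hence $-FD^*Gf=0$), $D_Gf=f$, and $D_{F^*}Lf=Lf$ since $Lf\in\ker D_{A^*}$; the formula $A=-FD^*G+D_{F^*}LD_G$ thus collapses to $Af=Lf$, giving $L=A\uphar\ker D_A$, which lands in $\ker D_{A^*}$ by the defect intertwining $A(\ker D_A)=\ker D_{A^*}$. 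The main obstacle in the whole argument is the passage from the squared identity $D_{D^*}^2K=KD_A^2$ to its square-root form $D_{D^*}K=KD_A$; this is exactly where the preliminary identification $\sD_{D^*}=\ran K$ becomes essential, since it makes $K|_{\sD_A}$ a unitary isomorphism onto $\sD_{D^*}$ and so lets us invoke uniqueness of positive square roots.
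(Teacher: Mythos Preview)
Your proof is correct and follows essentially the same route as the paper's. The one tactical difference is that the paper reaches the squared defect identity $D_{D^*}^2=KD_A^2K^*$ (and dually $D_D^2=M^*D_{A^*}^2M$) by specializing the norm formula \eqref{Contr} to $f=0$, whereas you obtain the equivalent intertwining $D_{D^*}^2K=KD_A^2$ by a direct expansion of $D^*K\zeta$ and $DD^*K\zeta$; both arguments then invoke uniqueness of positive square roots and read off $F$, $G$, $L$ from the block formulas in the same way.
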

\begin{proof} Since $D= -KA^*M+D_{K^*}XD_{M}$, we have
\[
\begin{array}{l}
||D_Df||^2=||D_{A^*}Mf||^2+||(D_KA^*M-K^*XD_M)f||^2+||D_XD_Mf||^2,\; f\in\sM,\\
||D_{D^*}g||^2=||D_{A}K^*g||^2+||(D_{M^*}AK^*-MX^*D_{K^*})g||^2+||D_{X^*}D_{K^*}g||^2,\;g\in\sN.
\end{array}
\]
By Corollary \ref{iscois1} the operators $K$ and $M^*$ are
isometries and $X\in{\bf L}(\sD_{M},\sD_{K^*})$ is unitary operator.
It follows that
\[
||D_Df||^2=||D_{A^*}Mf||^2,\; f\in\sM,\;
||D_{D^*}g||^2=||D_{A}K^*g||^2,\;g\in\sN.
\]
Hence, $D^2_D=M^*D^2_{A^*}M,\; D^2_{D^*}=KD^2_{A}K^*$. Since $K$ and
$M^*$ are isometries, we obtain
\[
D_D=M^*D_{A^*}M,\; D_{D^*}=KD_AK^*.
\]
It follows that $\sD_D=\ran M^*,\; \sD_{D^*}=\ran K,$
$D_{A^*}M=FM^*D_{A^*}M,$ and $D_{A}K^*=G^*KD_{A}K^*.$ Therefore,
\[
FM^*=I_{\sD_{A^*}}, \;G^*K=I_{\sD_{A}}.
\]
It follows
\[
F=M\uphar\sD_D,\; G^*=K^*\uphar\sD_{D^*}.
\]
 Hence, $F^*=M^*P_{\sD_{A^*}}$ and $G=KP_{\sD_A}$.
 In addition
 \[
 D^2_{F^*}=I_\sH-MM^*P_{\sD_{A^*}}=P_{\ker D_{A^*}},\;
 D^2_G=I_\sH-K^*KP_{\sD_{A}}=P_{\ker D_A},
 \]
\[
\begin{array}{l}
-FD^*G=-F(-M^*AK^*+D_MX^*D_{K^*})KP_{\sD_A}=AP_{\sD_A},\\
A=-FD^*G+D_{F^*}LD_G=AP_{\sD_A}+P_{\ker D_{A^*}}LP_{\ker D_A}.
\end{array}
\]
On the other hand
\[
A=AP_{\sD_A}+AP_{\ker D_A}.
\]
Hence $L=A\uphar\ker D_{A}$.

\end{proof}

 Let
$D:\sM\to\sN$ be a contraction with nonzero defect operators and let
 \[
Q=\begin{bmatrix}0&G\cr F&S
\end{bmatrix}:\begin{array}{l}\sD_{D}\\\oplus\\\sH\end{array}\to
\begin{array}{l}\sD_{D^*}\\\oplus\\\sK\end{array}
\]
 be a bounded operator.
 Define the transformation (see\cite{ARL1})
 \begin{equation}
 \label{transform}
 \cM_D(Q)=
\begin{bmatrix}D&0\cr 0&-FD^*G
\end{bmatrix}+
\begin{bmatrix}D_{D^*}&0\cr 0&I_\sK
\end{bmatrix}\begin{bmatrix}0&G\cr F&S
\end{bmatrix}\begin{bmatrix}D_D&0\cr 0&I_\sH
\end{bmatrix}.
\end{equation}
Clearly, the operator $T=\cM_D(Q)$ has the following matrix form
\[
T=\begin{bmatrix}D&D_{D^*}G&\cr FD_D &S-FD^*G
\end{bmatrix}:\begin{array}{l}\sM\\\oplus\\\sH\end{array}\to
\begin{array}{l}\sN\\\oplus\\\sK\end{array}.
\]
\begin{proposition}
\label{trans}\cite{ARL1}. Let $\sH,\sM,\sN$ be separable Hilbert
spaces and let
 $D:\sM\to\sN$ be a contraction with nonzero defect operators. Let
$Q=\begin{bmatrix}0&G\cr F&S
\end{bmatrix}:\begin{array}{l}\sD_{D}\\\oplus\\\sH\end{array}\to
\begin{array}{l}\sD_{D^*}\\\oplus\\\sH\end{array}$ be a bounded operator.
 Then
 \begin{enumerate}
 \item
 \[T=\cM_D(Q)=\begin{bmatrix}D&C\cr
B&A\end{bmatrix}:\begin{array}{l}\sD_{D}\\\oplus\\\sH\end{array}\to
\begin{array}{l}\sD_{D^*}\\\oplus\\\sH\end{array}
\]
is a contraction if and only if $Q$ is a contraction. $T$ is
isometric (co-isometric) if and only if $Q$ is isometric
(co-isometric);
\item holds the relations
\begin{equation}
\label{min11} \left\{
\begin{split}
& \bigcap_{n=0}^\infty\ker \left(B^*A^{*n}\right)=
\bigcap_{n=0}^\infty\ker
\left(F^*S^{*n}\right),\\
&\bigcap_{n=0}^\infty\ker \left(CA^n\right)=\bigcap_{n=0}^\infty\ker
\left(GS^{n}\right).
\end{split}
\right.
\end{equation}
\end{enumerate}
\end{proposition}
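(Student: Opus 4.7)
The plan is to base the entire proof on a single defect identity for the substitution $x := D_D h - D^{*} G f$, namely
\[
\|h\|^2 + \|f\|^2 - \left\| T \begin{bmatrix} h \\ f \end{bmatrix}\right\|^2 \;=\; \|x\|^2 + \|f\|^2 - \left\| Q \begin{bmatrix} x \\ f \end{bmatrix}\right\|^2, \qquad h\in\sM,\ f\in\sH.
\]
To derive it, I will expand the two squared norms that make up $\left\|T\begin{bmatrix} h\\ f\end{bmatrix}\right\|^2$, using the convenient rewriting $T\begin{bmatrix} h\\ f\end{bmatrix} = \begin{bmatrix} Dh+D_{D^{*}}Gf\\ F(D_Dh - D^{*}Gf)+Sf\end{bmatrix}$. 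The cross terms $2\RE\langle Dh, D_{D^{*}}Gf\rangle$ coming from the first component and $2\RE\langle D_Dh, D^{*}Gf\rangle$ coming from $\|x\|^2$ are equal by the intertwining $DD_D = D_{D^{*}}D$ from \eqref{defect} and therefore cancel once $\|x\|^2$ is moved to the other side. After this cancellation, the identities $\|Dh\|^2+\|D_Dh\|^2=\|h\|^2$ and $\|D_{D^{*}}Gf\|^2+\|D^{*}Gf\|^2=\|Gf\|^2$ reduce the expression to the claimed form.

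Once this identity is in hand, part (1) follows quickly. Because $G\sH\subset\sD_{D^{*}}$ and $D^{*}\sD_{D^{*}}\subset\sD_D$, the vector $x = D_Dh - D^{*}Gf$ always lies in $\sD_D$; conversely, for fixed $f$ the set $\{D_Dh - D^{*}Gf : h\in\sM\}$ is dense in $\sD_D$ since $\ran D_D$ is dense there. Hence the left-hand side is nonnegative for every $(h,f)\in\sM\oplus\sH$ if and only if the right-hand side is nonnegative for every $(x,f)\in\sD_D\oplus\sH$, which gives the contractive equivalence; the same comparison read with equality gives the isometric equivalence. For the co-isometric case, a short direct computation using \eqref{defect} verifies $T^{*} = \cM_{D^{*}}(Q^{*})$, so the same identity applied to the pair $(T^{*},Q^{*})$ completes (1).

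For \eqref{min11} in part (2), I will first record the pointwise equalities $\ker C=\ker G$ and $\ker B^{*}=\ker F^{*}$: since $G$ maps into $\sD_{D^{*}}$ on which $D_{D^{*}}$ is injective, $D_{D^{*}}Gf=0 \iff Gf=0$, and similarly $D_DF^{*}\varphi=0 \iff F^{*}\varphi=0$. The second equality of \eqref{min11} then follows by a one-line induction: if $f\in\bigcap_n\ker(CA^n)$, then $Gf=0$, hence $Af = (S-FD^{*}G)f = Sf$; now $GSf = GAf = 0$ forces $A^2 f = ASf = S^2 f$, and continuing inductively one obtains $A^n f = S^n f$ and $GS^n f = 0$ for all $n\ge 0$, so $f\in\bigcap_n\ker(GS^n)$; the same computation reverses. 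The first equality of \eqref{min11} follows by the same argument applied to the pair $(T^{*},Q^{*})$ via $T^{*}=\cM_{D^{*}}(Q^{*})$. The only genuinely delicate point in the whole proof is the cross-term cancellation in the defect identity, which is the step that truly uses the structure of $\cM_D$; everything else is bookkeeping and induction.
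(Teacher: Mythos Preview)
The paper does not supply a proof of this proposition: it is quoted from \cite{ARL1} and no argument is given in the present text, so there is nothing to compare your approach against here.

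That said, your proof is correct. The defect identity you state is exactly right: after writing the second component of $T\begin{bmatrix}h\\ f\end{bmatrix}$ as $Fx+Sf$ with $x=D_Dh-D^{*}Gf$, the second components of $T$ and $Q$ coincide, and the remaining claim
\[
\|h\|^2-\|Dh+D_{D^{*}}Gf\|^2=\|D_Dh-D^{*}Gf\|^2-\|Gf\|^2
\]
follows from $\|h\|^2-\|Dh\|^2=\|D_Dh\|^2$, $\|Gf\|^2-\|D^{*}Gf\|^2=\|D_{D^{*}}Gf\|^2$, and the equality of cross terms $\langle Dh,D_{D^{*}}Gf\rangle=\langle D_Dh,D^{*}Gf\rangle$ via \eqref{defect}. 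The density argument ($x$ runs over $\ran D_D - D^{*}Gf$, dense in $\sD_D$ for each fixed $f$) together with continuity gives the contractive and isometric equivalences, and your check $T^{*}=\cM_{D^{*}}(Q^{*})$ is straightforward and handles the co-isometric case. For part (2), the reductions $\ker C=\ker G$ and $\ker B^{*}=\ker F^{*}$ are correct (injectivity of $D_{D^{*}}$ on $\sD_{D^{*}}$ and of $D_D$ on $\sD_D$), and your induction ``$Gf=0\Rightarrow Af=Sf$, hence $A^nf=S^nf$ and $GA^nf=GS^nf$'' is clean and reversible; applying it to $T^{*}=\cM_{D^{*}}(Q^{*})$ gives the first line of \eqref{min11}. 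Your approach is in the same spirit as the defect computations \eqref{Contr}--\eqref{contr} of Theorems~\ref{ParContr1} and~\ref{ParContr}, but more direct since you do not decompose $S$ further as $-FD^{*}G+D_{F^{*}}LD_G$.
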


\section {The M\"obius representations}
Let $T:\sH_1\to \sH_2$ be a contraction. In \cite{Shtraus} and
\cite{Shmul1} were studied the fractional-linear transformations of
the form
\begin{equation}
\label{unflt} Z\to
Q=T+D_{T^*}Z(I_{\sD_T}+T^*Z)^{-1}D_T=T+D_{T^*}(I_{\sD_{T^*}}+ZT^*)^{-1}ZD_T
\end{equation}
defined on the set $\cV_{T^*}$ of all contractions
$Z\in\bL(\sD_T,\sD_{T^*})$ such that $-1\in\rho(T^*Z).$ The
following result holds.
\begin{theorem} \cite{Shmul1}
\label{SHMUL} Let the $T\in\bL(\sH_1,\sH_2)$ be a contraction and
let $Z\in\cV_{T^*}$. Then $Q=T+D_{T^*}Z(I_{\sD_T}+T^*Z)^{-1}D_T$ is
a contraction,
\begin{equation}
\label{DEFECT11} ||D_{Q}f||^2=||D_Z(I_{\sD_T}+T^*Z)^{-1}D_Tf||,\;
f\in \sH_1,
\end{equation}
$\ran D_Q\subseteq\ran D_T$, and $\ran D_Q=\ran D_T$ if and only if
$||Z||< 1.$ Moreover, if $Q\in\bL(\sH_1,\sH_2)$ is a contraction and
$Q=T+D_{T^*}XD_T,$ where $X\in\bL(\sD_T,\sD_{T^*})$ then
$X\in\cV_{T^*}$,
 $$Z=X(I_{\sD_T}-T^*X)^{-1}\in\cV_{T^*},$$
and the operator $Q$ takes the form
$Q=T+D_{T^*}Z(I_{\sD_T}+T^*Z)^{-1}D_T$.
\end{theorem}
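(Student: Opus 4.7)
The plan is to reduce the whole theorem to one algebraic identity for $I - Q^*Q$. Setting $W := Z(I+T^*Z)^{-1}$, so that $Q = T + D_{T^*}WD_T$, I would expand $Q^*Q$ and use the intertwining $T^*D_{T^*} = D_TT^*$ together with $D_{T^*}^2 = I - TT^*$ to arrive at
\[
I - Q^*Q = D_T\bigl[I - T^*W - W^*T - W^*W + W^*TT^*W\bigr]D_T.
\]
The key step is then to verify
\[
(I+Z^*T)\bigl[I - T^*W - W^*T - W^*W + W^*TT^*W\bigr](I+T^*Z) = I - Z^*Z,
\]
which follows by using the adjoint relation $Z^*TW^* = Z^* - W^*$ (obtained from $W(I+T^*Z)=Z$) to eliminate the mixed-order terms, after which the remaining cross-terms cancel. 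This yields
\[
I - Q^*Q = D_T(I+T^*Z)^{-*}D_Z^2(I+T^*Z)^{-1}D_T \geq 0,
\]
giving at once the contractivity of $Q$ and the defect identity \eqref{DEFECT11}.

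For the range claims I would factor $D_Q^2 = RR^*$ with $R := D_T(I+T^*Z)^{-*}D_Z$. By Douglas' factorization lemma, $\ran D_Q = \ran R \subseteq \ran D_T$. The equality $\ran D_Q = \ran D_T$ holds if and only if $D_Q^2 \geq c\,D_T^2$ for some $c > 0$, and since $(I+T^*Z)^{-1}$ is a bounded bijection of $\sD_T$, this reduces to $D_Z^2 \geq c'\,I_{\sD_T}$ on $\sD_T$, i.e., to $D_Z$ being boundedly invertible on $\sD_T$, which is precisely $\|Z\| < 1$.

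For the converse, assume $Q$ is a contraction of the form $T + D_{T^*}XD_T$ and put $Z := X(I-T^*X)^{-1}$ (the invertibility of $I-T^*X$ is the main obstacle, addressed below). A direct calculation $(I+T^*Z)(I-T^*X) = I$ then forces $(I+T^*Z)^{-1} = I - T^*X$ and $Z(I+T^*Z)^{-1} = X$, so $Q$ takes the stated M\"obius form and $-1 \in \rho(T^*Z)$. Substituting $X$ for $W$ in the main identity gives $I - Q^*Q = D_T\bigl[(I-X^*T)(I-T^*X) - X^*X\bigr]D_T$, so the bracket is nonnegative on $\sD_T$. Sandwiching this bracket between $(I-X^*T)^{-1}$ and $(I-T^*X)^{-1}$, which are mutual adjoints, transforms it into $I - Z^*Z \geq 0$, so $Z$ is a contraction and hence $Z \in \cV_{T^*}$.

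The main obstacle is the invertibility of $I - T^*X$ on $\sD_T$. Rewriting $Q^*Q \leq I$ as $\|(I-T^*X)u\|^2 \geq \|Xu\|^2$ on $\sD_T$ and combining with the elementary bound $\|T^*Xu\| \leq \|Xu\|$, a two-case split on whether $\|u\| \geq \|T^*Xu\|$ or not yields $\|(I-T^*X)u\| \geq \tfrac12\|u\|$ throughout $\sD_T$; this gives injectivity with closed range. For dense range, suppose $u \in \sD_T \setminus \{0\}$ satisfies $(I-X^*T)u = 0$. Then $v := Tu \in \sD_{T^*}\setminus\{0\}$ satisfies $X^*v = u$ and $(I-TX^*)v = 0$, contradicting the companion inequality $\|(I-TX^*)v\|^2 \geq \|X^*v\|^2$ obtained from $QQ^* \leq I$. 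Hence $I - T^*X$ is invertible, completing the argument.
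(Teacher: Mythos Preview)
The paper does not prove this theorem: it is quoted from Shmul'yan \cite{Shmul1} without argument, so there is no ``paper's own proof'' to compare against. Your proposal is a self-contained direct proof and is essentially correct.

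Two small points. First, your case split for the lower bound on $\|(I-T^*X)u\|$ is phrased imprecisely: splitting at $\|u\|\ge\|T^*Xu\|$ does not by itself yield $\tfrac12\|u\|$ in the first case. The clean version avoids any split: from $\|(I-T^*X)u\|\ge\|Xu\|\ge\|T^*Xu\|$ and the triangle inequality one gets
\[
\|u\|\le\|(I-T^*X)u\|+\|T^*Xu\|\le 2\|(I-T^*X)u\|,
\]
which is exactly the bound you want. Second, you (correctly) do not attempt to show $X\in\cV_{T^*}$; indeed $X$ need not be a contraction (take $\sH_1=\sH_2=\dC$, $T=t\in(0,1)$, $X$ real with $-1/(1-t)<X<-1$), so that clause in the stated theorem appears to be a slip --- the substantive content is the invertibility of $I-T^*X$ and $Z\in\cV_{T^*}$, both of which your argument establishes.
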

Observe that from \eqref{unflt} one can derive the equalities
\[
\begin{array}{l}
I_{\sH_2}-QT^*=D_{T^*}(I_{\sD_{T^*}}+ZT^*)^{-1}D_{T^*},\\
 Z\uphar\ran
D_T=D_{T^*}(I_{\sH_2}-QT^*)^{-1}(Q-T)D^{-1}_T.
\end{array}
\]
 The
transformation \eqref{unflt} is called in \cite{Shmul1} the unitary
linear-fractional transformation. It is easy to see that if
$||T||<1$ then the closed unit operator ball in $\bL(\sH_1, \sH_2)$
belongs to the set $\cV_{T^*}$ and, moreover
\[
\begin{split}
&T+D_{T^*}Z(I_{\sD_T}+T^*Z)^{-1}D_T=D^{-1}_{T^*}(Z+T)(I_{\sD_T}+T^*Z)^{-1}D_{T}=\\
&\qquad\qquad\qquad=D_{T^*}(I_{\sD_{T^*}}+ZT^*)^{-1}(Z+T)D^{-1}_{T}
\end{split}
\]
for all $Z\in\bL(\sH_1,\sH_2),$ $||Z||\le 1.$ Thus, the
transformation \eqref{unflt} is an operator analog of a well known
M\"obius transformation of the complex plane
\[
z\to \frac{z+t}{1+\bar t z},\;|t|\le 1.
\]
The next theorem is a version of a more general result established
by Yu.L.~Shmul'yan in \cite{Shmul2}.
\begin{theorem} \cite{Shmul2}
\label{SHMUL1} Let $\sM$ and $\sN$ be Hilbert spaces and let the
function $\Theta(\lambda)$ be from the Schur class ${\bf
S}(\sM,\sN).$ Then
\begin{enumerate}
\item the linear manifolds
$\ran D_{\Theta(\lambda)}$ and $\ran D_{\Theta^*(\lambda)}$ do not
depend on $\lambda\in\dD,$
\item for arbitrary $\lambda_1,$ $\lambda_2$, $\lambda_3$ in $\dD$ the function $\Theta(\lambda)$ admits the representation
\[
\Theta(\lambda)=\Theta(\lambda_1)+D_{\Theta^*(\lambda_2)}\Psi(\lambda)D_{\Theta(\lambda_3)},
\]
where $\Psi(\lambda)$ is a holomorphic in $\dD$ and
$\bL\left(\sD_{\Theta(\lambda_3)},\sD_{\Theta^*(\lambda_2)}\right)$-valued
function.
\end{enumerate}
\end{theorem}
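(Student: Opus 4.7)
The plan is to deduce the theorem in two stages: first establish claim (i) using Theorem \ref{SHMUL}, then use (i) together with Douglas's factorization lemma to construct $\Psi(\lambda)$ in (ii).

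For (i), I would fix $\lambda_0\in\dD$ and shift Theorem \ref{MO} to base point $\lambda_0$. Composing $\Theta$ with the M\"obius automorphism $b_{\lambda_0}(\mu)=(\mu-\lambda_0)/(1-\bar\lambda_0\mu)$ produces the Schur function $\wt\Theta(\mu):=\Theta(b_{\lambda_0}^{-1}(\mu))$ with $\wt\Theta(0)=\Theta(\lambda_0)$; applying Theorem \ref{MO} to $\wt\Theta$ and then substituting $\mu=b_{\lambda_0}(\lambda)$ back yields a function $W_{\lambda_0}\in{\bf S}(\sD_{\Theta(\lambda_0)},\sD_{\Theta^*(\lambda_0)})$ with $W_{\lambda_0}(\lambda_0)=0$ such that
\[
\Theta(\lambda)=\Theta(\lambda_0)+D_{\Theta^*(\lambda_0)}W_{\lambda_0}(\lambda)\bigl(I+\Theta^*(\lambda_0)W_{\lambda_0}(\lambda)\bigr)^{-1}D_{\Theta(\lambda_0)}.
\]
Theorem \ref{SHMUL} applied with $T=\Theta(\lambda_0)$ and $Z=W_{\lambda_0}(\lambda)$ then gives $\ran D_{\Theta(\lambda)}\subseteq\ran D_{\Theta(\lambda_0)}$. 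Interchanging the roles of $\lambda$ and $\lambda_0$ yields the reverse inclusion, so $\ran D_{\Theta(\lambda)}$ is independent of $\lambda\in\dD$; the statement for $\ran D_{\Theta^*(\lambda)}$ follows by applying the same argument to the Schur function $\lambda\mapsto\Theta^*(\bar\lambda)$.

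For (ii), I take the representation from (i) with base point $\lambda_1$,
\[
\Theta(\lambda)-\Theta(\lambda_1)=D_{\Theta^*(\lambda_1)}\Psi_1(\lambda)D_{\Theta(\lambda_1)},\quad \Psi_1(\lambda):=W_{\lambda_1}(\lambda)\bigl(I+\Theta^*(\lambda_1)W_{\lambda_1}(\lambda)\bigr)^{-1},
\]
with $\Psi_1$ holomorphic and $\bL(\sD_{\Theta(\lambda_1)},\sD_{\Theta^*(\lambda_1)})$-valued. By (i) the ranges $\ran D_{\Theta(\lambda_1)}=\ran D_{\Theta(\lambda_3)}$ and $\ran D_{\Theta^*(\lambda_1)}=\ran D_{\Theta^*(\lambda_2)}$ coincide, so Douglas's factorization lemma produces bounded operators $X\in\bL(\sN)$ and $Y\in\bL(\sM)$ with $D_{\Theta^*(\lambda_1)}=D_{\Theta^*(\lambda_2)}X$ and $D_{\Theta(\lambda_1)}=Y^*D_{\Theta(\lambda_3)}$ (the latter obtained by taking adjoints in the Douglas factorization of $D_{\Theta(\lambda_1)}$ through $D_{\Theta(\lambda_3)}$). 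Extending $\Psi_1(\lambda)$ by zero on $\ker D_{\Theta(\lambda_1)}$ to an element of $\bL(\sM,\sN)$ and setting
\[
\Psi(\lambda):=P_{\sD_{\Theta^*(\lambda_2)}}X\Psi_1(\lambda)Y^*\uphar\sD_{\Theta(\lambda_3)}
\]
produces a holomorphic $\bL(\sD_{\Theta(\lambda_3)},\sD_{\Theta^*(\lambda_2)})$-valued function, and the identity $D_{\Theta^*(\lambda_2)}\Psi(\lambda)D_{\Theta(\lambda_3)}=\Theta(\lambda)-\Theta(\lambda_1)$ is verified using the absorbing identities $D_{\Theta^*(\lambda_2)}P_{\sD_{\Theta^*(\lambda_2)}}=D_{\Theta^*(\lambda_2)}$ and $P_{\sD_{\Theta(\lambda_3)}}D_{\Theta(\lambda_3)}=D_{\Theta(\lambda_3)}$.

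The hard part will be establishing the holomorphicity of $\Psi_1(\lambda)$, i.e., that $I+\Theta^*(\lambda_1)W_{\lambda_1}(\lambda)$ is invertible on $\sD_{\Theta(\lambda_1)}$ throughout $\dD$. This follows from Schwartz's lemma applied to the operator-valued Schur function $W_{\lambda_1}$: since $W_{\lambda_1}(\lambda_1)=0$, one has $\|W_{\lambda_1}(\lambda)\|\le|b_{\lambda_1}(\lambda)|<1$ on $\dD$, hence $\|\Theta^*(\lambda_1)W_{\lambda_1}(\lambda)\|<1$ and the inverse is furnished by a convergent Neumann series depending holomorphically on $\lambda$. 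The remainder of the argument is essentially bookkeeping about which operator acts on which subspace and where the projections $P_{\sD_{\Theta(\lambda_3)}}$ and $P_{\sD_{\Theta^*(\lambda_2)}}$ may be inserted without changing the identity.
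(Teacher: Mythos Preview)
The paper does not supply its own proof of Theorem~\ref{SHMUL1}; it is quoted as a result of Shmul'yan \cite{Shmul2}, and immediately afterwards the paper remarks that Theorems~\ref{SHMUL} and~\ref{SHMUL1} together yield Theorem~\ref{MO}. So there is nothing to compare your argument against in the paper itself.

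Your argument is essentially correct, with two comments. First, within the paper's logical organization there is a mild circularity: you invoke Theorem~\ref{MO} (shifted to base point $\lambda_0$) to prove Theorem~\ref{SHMUL1}, whereas the paper derives Theorem~\ref{MO} from Theorems~\ref{SHMUL} and~\ref{SHMUL1}. This is not a genuine obstacle, since Theorem~\ref{MO} is independently established in \cite{Const}, \cite{BC}; but you should flag this explicitly so the reader knows you are treating Theorem~\ref{MO} as an external input rather than as a consequence of the result you are proving.

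Second, the step where you interchange $\lambda$ and $\lambda_0$ to obtain the reverse inclusion is unnecessary. Theorem~\ref{SHMUL} already asserts that $\ran D_Q=\ran D_T$ precisely when $\|Z\|<1$, and you have shown $\|W_{\lambda_0}(\lambda)\|\le|b_{\lambda_0}(\lambda)|<1$ via the Schwarz lemma; so the equality $\ran D_{\Theta(\lambda)}=\ran D_{\Theta(\lambda_0)}$ is immediate. The rest of your outline for part (ii)---the Douglas factorization to pass from the defect operators at $\lambda_1$ to those at $\lambda_2,\lambda_3$, and the Neumann-series holomorphicity of $(I+\Theta^*(\lambda_1)W_{\lambda_1}(\lambda))^{-1}$---is sound.
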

Now using Theorems \ref{SHMUL} and \ref{SHMUL1} we get Theorem
\ref{MO}. Recall that the representation \eqref{MREP} of a function
$\Theta(\lambda)\in{\bf S}(\sM,\sN)$ is called the M\"obius
representation of $\Theta$ and the function $Z(\lambda)\in {\bf
S}(\sD_{\Theta(0)},\sD_{\Theta^*(0)})$ is called the M\"obius
parameter of $\Theta$.

The next result established in \cite{ARL1} provides connections
between the realizations of $\Theta(\lambda)$ and $Z(\lambda)$ as
transfer functions of passive systems.
\begin{theorem}
\label{TT1}\cite{ARL1}. \begin{enumerate}
\item Let $\tau=\left\{\begin{bmatrix}D&C\cr B&A
\end{bmatrix};\sM,\sN,\sH\right\}$ be a passive system and let
\[
T=\begin{bmatrix}D&C\cr B&A
\end{bmatrix}
=\begin{bmatrix}D& D_{D^*}G\cr FD_D &-FD^*G+D_{F^*}LD_{G}
\end{bmatrix}:\begin{array}{l}\sM\\\oplus\\\sH\end{array}\to
\begin{array}{l}\sN\\\oplus\\\sH\end{array}.
\]
Let $\Theta(\lambda)$ be the transfer function of $\tau$. 
 Then
 \begin{enumerate}
\def\labelenumi{\rm (\roman{enumi})}
 \item the M\"obius parameter $Z(\lambda)$ of the function $\Theta(\lambda)$ is the
transfer function of the passive system
\[
\nu=\left\{\begin{bmatrix}0& G\cr
F&D_{F^*}LD_G\end{bmatrix};\sD_{D},\sD_{D^*},\sH\right\};
\]
\item
the system $\tau$ isometric (co-isometric) $\Rightarrow$ the system
$\nu$ isometric (co-isometric);
\item  the equalities $\sH^c_\nu =\sH^c_\tau$, $\sH^o_\nu =\sH^o_\tau$ hold and hence the system $\tau$ is
controllable (observable) $\Rightarrow$ the system $\nu$ is
controllable (observable),
 the system $\tau$ is simple (minimal)
$\Rightarrow$ the system $\nu$ is simple (minimal).
\end{enumerate}
\item
 Let $\Theta(\lambda)\in {\bf S}(\sM,\sN)$ and let $Z(\lambda)$ be the M\"obius parameter of
 $\Theta(\lambda)$. Suppose that the transfer function
of the linear system
\[ \nu'=\left\{\begin{bmatrix}0&G\cr F
&S\end{bmatrix};\sD_{\Theta(0)},\sD_{\Theta^*(0)}, \sH\right\}
\]
coincides with $Z(\lambda)$ in a neighborhood of the origin. Then
the transfer function of the linear system
\[
\tau'=\left\{\begin{bmatrix}\Theta(0)&D_{\Theta^*(0)}G\cr
FD_{\Theta(0)} &-F\Theta^*(0)G+S\end{bmatrix};\sM,\sN,\sH\right\}
\]
coincides with
 $\Theta(\lambda)$ in a neighborhood of the origin. Moreover
 \begin{enumerate}
 \item
 the equalities $\sH^c_{\tau'} =\sH^c_{\nu'}$, $\sH^o_{\tau'} =\sH^o_{\nu'}$ hold, and hence the system
$\nu'$ is controllable (observable) $\Rightarrow$ the system $\tau'$
is controllable (observable), the system $\nu'$ is simple
$\Rightarrow$ the system $\tau'$ is simple (minimal),
\item the system $\nu'$ is passive $\Rightarrow$ the system $\tau'$
is passive (minimal),
\item
the system $\nu'$ isometric (co-isometric) $\Rightarrow$ the system
$\tau'$ isometric (co-isometric).
\end{enumerate}
\end{enumerate}
\end{theorem}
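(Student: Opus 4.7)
The plan is to verify part (1)(i) by a direct resolvent computation that recognizes the transfer function of $\nu$ as the M\"obius parameter of $\Theta$, and then to deduce the remaining assertions of part (1) and all of part (2) as immediate applications of Proposition \ref{trans}.

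For (1)(i), I would set $S := D_{F^*} L D_G$, so that the state-space operator of $\tau$ takes the form $A = -F D^* G + S$ and the transfer function of $\nu$ is $\Theta_\nu(\lambda) = \lambda G (I - \lambda S)^{-1} F$. Starting from the identity $I - \lambda A = (I - \lambda S) + \lambda F D^* G$, a short resolvent manipulation together with the push-through identity $(I + Y D^*)^{-1} Y = Y (I + D^* Y)^{-1}$, applied with $Y := \Theta_\nu(\lambda)$, yields
\[
\lambda G (I - \lambda A)^{-1} F = \Theta_\nu(\lambda) \bigl(I + D^* \Theta_\nu(\lambda)\bigr)^{-1}.
\]
Substituting this into $\Theta(\lambda) = D + \lambda D_{D^*} G (I - \lambda A)^{-1} F D_D$ reproduces the M\"obius representation \eqref{MREP} with $\Theta_\nu$ playing the role of the M\"obius parameter, and uniqueness of the M\"obius parameter (Theorem \ref{MO}) then forces $Z = \Theta_\nu$.

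For (1)(ii) and (1)(iii), the crucial observation is that the block matrix of $\tau$ equals $\cM_D(Q)$, where $Q$ is the block matrix of $\nu$. Proposition \ref{trans}(1) then delivers the transfer of passivity, isometry and co-isometry. Proposition \ref{trans}(2) gives
\[
\bigcap_{n\ge 0}\ker(B^*A^{*n}) = \bigcap_{n\ge 0}\ker(F^*S^{*n}), \qquad \bigcap_{n\ge 0}\ker(CA^n) = \bigcap_{n\ge 0}\ker(GS^n),
\]
and combining with \eqref{eqco} yields $\sH^c_\tau = \sH^c_\nu$ and $\sH^o_\tau = \sH^o_\nu$, from which the equivalences of controllability, observability, simplicity and minimality follow at once. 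Part (2) is the same argument read in reverse: by construction, the block matrix of $\tau'$ is $\cM_D$ applied to the block matrix of $\nu'$, so the computation above identifies the transfer function of $\tau'$ with $\Theta$ on a neighborhood of the origin, and the remaining conclusions are again immediate from Proposition \ref{trans}.

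The only genuinely non-trivial step is the push-through manipulation in part (1)(i). It is algebraically routine once one recognizes that $(I + D^* Z)^{-1}$ and $(I + Y D^*)^{-1}$ must be intertwined through $Y$; every other step of the proof reduces to bookkeeping with the parametrization results already cited.
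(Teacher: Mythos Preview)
The paper does not prove Theorem~\ref{TT1}; it is quoted from \cite{ARL1} without argument. Your proposal is correct and is exactly the approach one would reconstruct from the surrounding material: the key identity $T=\cM_D(Q)$ with $Q$ the block matrix of $\nu$ is precisely the content of Proposition~\ref{trans}, and your resolvent/push-through computation for part~(1)(i) is the standard way to recover the M\"obius representation from a realization. One small remark: Theorem~\ref{MO} as stated in the paper asserts existence but not uniqueness of the M\"obius parameter; the uniqueness you invoke follows from Theorem~\ref{SHMUL} (specifically, from the formula $Z=X(I-T^*X)^{-1}$ and the fact that $X\in\bL(\sD_T,\sD_{T^*})$ is uniquely determined by $D_{T^*}XD_T$), so you may want to cite that instead.
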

\begin{corollary}
\label{zero} 1) The equivalences
\[
\begin{array}{l}
\f_\Theta(\lambda)=0 \iff \f_Z(\lambda)=0,\\
\psi_\Theta(\lambda)=0\iff \psi_Z(\lambda)=0
\end{array}
\]
hold.

2) Let $||\Theta(0)\uphar\sD_{\Theta(0)}||<1.$ Suppose
$\f(\lambda)\in{\bf S}(\sM,\sL)$ ($\psi(\lambda)\in{\bf
S}(\sK,\sN)$) and
\[
\begin{array}{l}
\f^*(\xi)\f(\xi)=D^2_{\Theta(\xi)}\quad\mbox{for almost
all}\quad\xi\in\dT\\
\left(\psi(\xi)\psi^*(\xi)=D^2_{\Theta^*(\xi)}\quad\mbox{for almost
all}\quad\xi\in\dT \right).
\end{array}
\]
Then
\[
\begin{array}{l}
\wt\f(\lambda):=\f(\lambda)D^{-1}_{\Theta(0)}(I_{\sD_{\Theta(0)}}+\Theta^*(0)Z(\lambda))
\in{\bf S}(\sD_{\Theta(0)},\sL)\\
\left(
\wt\psi(\lambda):=(I_{\sD_{\Theta^*(0)}}+Z(\lambda)\Theta^*(0))
D^{-1}_{\Theta^*(0)}P_{\sD_{\Theta^*(0)}}\psi(\lambda) \in{\bf
S}(\sK,\sD_{\Theta^*(0)}) \right)
\end{array}
\]
and
\[
\begin{array}{l}
\wt \f^*(\xi)\wt\f(\xi)=D^2_{Z(\xi)}\quad\mbox{for almost
all}\quad\xi\in\dT\\
\left( \wt \psi(\xi)\wt\psi^*(\xi)=D^2_{Z^*(\xi)}\quad\mbox{for
almost all}\quad\xi\in\dT \right).
\end{array}
\]
In particular,
\[
\Theta(\lambda)\quad\mbox{is inner (co-inner)}\;\iff
Z(\lambda)\quad\mbox{is inner (co-inner)}.
\]
\end{corollary}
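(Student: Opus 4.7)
My plan for the first assertion is a direct application of the realization machinery. Pick any simple conservative realization
$\tau=\left\{\begin{bmatrix}\Theta(0)&C\cr B&A\end{bmatrix};\sM,\sN,\sH\right\}$
of $\Theta$, which exists by the results cited in Section~\ref{secS}.
Then Theorem~\ref{TT1}(1) produces, on the same state space $\sH$, an explicit simple conservative realization $\nu$ of the M\"obius parameter $Z$, and item (iii) of that theorem guarantees the equalities
$\sH^c_\nu=\sH^c_\tau$, $\sH^o_\nu=\sH^o_\tau$.
Combining this with Theorem~\ref{DBR}(2) yields
$\f_\Theta=0\iff\tau\ \text{observable}\iff\sH^o_\tau=\sH\iff\sH^o_\nu=\sH\iff\nu\ \text{observable}\iff\f_Z=0$,
and analogously $\psi_\Theta=0\iff\psi_Z=0$ via controllable subspaces.

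For the second part, the plan is to square Shmul'yan's identity \eqref{DEFECT11} applied to $T=\Theta(0)$, $Q=\Theta(\lambda)$ to obtain the operator identity
\[
D^2_{\Theta(\lambda)}=D_{\Theta(0)}\bl(I+Z^*(\lambda)\Theta(0)\br)^{-1}D^2_{Z(\lambda)}\bl(I+\Theta^*(0)Z(\lambda)\br)^{-1}D_{\Theta(0)},
\]
and then to solve it for $D^2_{Z(\lambda)}$. The hypothesis $\|\Theta(0)\uphar\sD_{\Theta(0)}\|<1$ is exactly what makes this inversion possible up to the boundary: it forces $D_{\Theta(0)}\uphar\sD_{\Theta(0)}$ to be boundedly invertible on $\sD_{\Theta(0)}$, and since $\|\Theta^*(0)\uphar\sD_{\Theta^*(0)}\|=\|\Theta(0)\uphar\sD_{\Theta(0)}\|<1$ while $\|Z(\lambda)\|\le 1$, it yields $\|\Theta^*(0)Z(\lambda)\|<1$ uniformly on $\ovl\dD$, so that $(I+\Theta^*(0)Z(\lambda))^{-1}$ is bounded and holomorphic up to $\dT$.
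Inverting the displayed identity and substituting the boundary equality $\f^*(\xi)\f(\xi)=D^2_{\Theta(\xi)}$ produces exactly $\wt\f^*(\xi)\wt\f(\xi)=D^2_{Z(\xi)}$ a.e.\ on $\dT$. Holomorphy of $\wt\f$ on $\dD$ is immediate from that of $\f$ and $Z$ together with boundedness of the inverses, and the boundary bound $\wt\f^*\wt\f\le I$ combined with $\wt\f\in H^\infty$ yields $\wt\f\in{\bf S}(\sD_{\Theta(0)},\sL)$ by the maximum principle. The $\wt\psi$-half is handled by applying the same argument to the dual form of Shmul'yan's identity, equivalently to $T^*=\Theta^*(0)$.

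For the \emph{in particular} statement, observe first that the rightmost factor $D_{\Theta(0)}$ in the Shmul'yan identity forces $D^2_{\Theta(\lambda)}\uphar\ker D_{\Theta(0)}=0$ for all $\lambda\in\dD$, hence also a.e.\ on $\dT$. Therefore $\Theta$ is inner iff $D^2_{\Theta(\xi)}=0$ a.e.\ iff $D^2_{\Theta(\xi)}\uphar\sD_{\Theta(0)}=0$ a.e., and by the bounded invertibility of the flanking factors this is in turn equivalent to $D^2_{Z(\xi)}=0$ a.e., i.e.\ $Z$ inner; the co-inner case is dual.
The main technical obstacle throughout part (2) is precisely the boundary control of the inverses $D^{-1}_{\Theta(0)}$ and $(I+\Theta^*(0)Z(\xi))^{-1}$; on the open disk $\|Z(\lambda)\|\le|\lambda|<1$ already secures invertibility for free, but passing to the boundary requires the uniform estimate, which is exactly what the hypothesis $\|\Theta(0)\uphar\sD_{\Theta(0)}\|<1$ provides.
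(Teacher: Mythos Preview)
Your proposal is correct and follows essentially the same route as the paper. For part~1) the paper also reduces to Theorem~\ref{DBR}(2) via the conservative realizations furnished by Theorem~\ref{TT1}; for part~2) the paper likewise derives from \eqref{DEFECT11} the identity $\|D_{\Theta(\lambda)}D^{-1}_{\Theta(0)}(I+\Theta^*(0)Z(\lambda))f\|^2=\|D_{Z(\lambda)}f\|^2$ on $\sD_{\Theta(0)}$ and passes to non-tangential boundary values, which is exactly your operator-identity argument read in norm form, with the Schur-class membership of $\wt\f$ and the inner/co-inner equivalence left implicit there but spelled out by you.
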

\begin{proof}
1) Let $\f_\Theta(\lambda)=0$ ($\psi_\Theta(\lambda)=0$) and let
$\tau=\left\{\begin{bmatrix}D&C\cr B&A
\end{bmatrix};\sM,\sN,\sH\right\}$ be a simple conservative system with transfer function
$\Theta(\lambda)$. By Theorem \ref{DBR} the system $\tau$ is
observable (controllable). As it is proved above the corresponding
system $\nu$ with transfer function $Z(\lambda)$ is conservative and
observable (controllable). Theorem \ref{DBR} yields that
$\f_Z(\lambda)=0$ ($\psi_Z(\lambda)=0$).

Conversely. Let $\f_Z(\lambda)=0$ ($\psi_Z(\lambda)=0$) and let
$\nu'$ be a simple conservative system with transfer function
$Z(\lambda)$. Again by Theorem \ref{DBR} the system $\nu'$ is
observable (controllable). As it is already proved the corresponding
system $\tau'$ with transfer function $\Theta(\lambda)$ is
conservative and observable (controllable) as well. Now Theorem
\ref{DBR} yields that $\f_\Theta(\lambda)=0$
($\psi_\Theta(\lambda)=0$).

2) Let $||\Theta(0)\uphar\sD_{\Theta(0)}||<1$. Since
\[
\Theta^*(0)\uphar\sD_{\Theta^*(0)}=\left(\Theta(0)\uphar\sD_{\Theta(0)}\right)^*,
\]
we get $||\Theta^*(0)\uphar\sD_{\Theta^*(0)}||<1$. It follows that
the operators $D_{\Theta(0)}\uphar\sD_{\Theta(0)}$ and
$D_{\Theta^*(0)}\uphar\sD_{\Theta^*(0)}$ have bounded inverses. From
\eqref{DEFECT11} we obtain the relation
\[
||D_{\Theta(\lambda)}D^{-1}_{\Theta(0)}(I_{\sD_{\Theta(0)}}+\Theta^*(0)Z(\lambda))f||^2=
||D_{Z(\lambda)}f||^2,\;\lambda\in\dD,\;f\in\sD_{\Theta(0)}.
\]
The non-tangential limits $\Theta(\xi)$ and $Z(\xi)$ exist for
almost all $\xi\in\dT$. It follows the relation
\[
||D_{\Theta(\xi)}D^{-1}_{\Theta(0)}(I_{\sD_{\Theta(0)}}+\Theta^*(0)Z(\xi))f||^2=
||D_{Z(\xi)}f||^2,\;f\in\sD_{\Theta(0)}.
\]
for almost all $\xi\in\dT.$ This completes the proof.
 \end{proof}
\begin{theorem}
\label{EXX} Let $A$ be a completely non-unitary contraction in the
Hilbert space $\sH$ and let $Z(\lambda)$ be the  M\"obius parameter
of the Sz.Nagy--Foias characteristic function of $A$. Then
$Z(\lambda)$ is the characteristic function of the operator
$\cA_{1,0}=AP_{\ker D_A}$(see \eqref{Anm} and \eqref{CA}). Moreover,
the following statements are equivalent
\begin{enumerate}
\def\labelenumi{\rm (\roman{enumi})}
\item the unitary equivalent operators $A_{1,0}$ and $A_{0,1}$ are  unilateral shifts (co-shifts),
\item $\sD_A\subset\sD_{A^*}$ ($\sD_{A^*}\subset\sD_A$),
\item
the M\"obius parameter takes the form $Z(\lambda)=\lambda I_{\sD_A}$
($Z^*(\bar\lambda)=\lambda I_{\sD_{A^*}}).$
\end{enumerate}
\end{theorem}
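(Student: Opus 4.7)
The plan is to realize $Z$ as the transfer function of an explicit simple conservative system on $\sH$, identify its state operator, and invoke Proposition \ref{PURE}. I start with the canonical simple conservative realization
\[
\Sigma=\left\{\begin{bmatrix}-A\uphar\sD_A & D_{A^*}\cr D_A\uphar\sD_A & A^*\end{bmatrix};\sD_A,\sD_{A^*},\sH\right\}
\]
of $\Phi_A$, and via Proposition \ref{uncion} decompose this unitary block operator in the parametrization of Theorem \ref{ParContr} with feedthrough $D=-A\uphar\sD_A$. A direct computation yields $\sD_D=\sD_A$, $\sD_{D^*}=\sD_{A^*}$, $F:\sD_A\hookrightarrow\sH$ the inclusion, $G=P_{\sD_{A^*}}$, $L=A^*\uphar\ker D_{A^*}$, $D_{F^*}=P_{\ker D_A}$, $D_G=P_{\ker D_{A^*}}$. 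Theorem \ref{TT1}(1) then gives that $Z$ is the transfer function of the simple conservative system $\nu$ on $\sH$ with state operator $A_\nu=D_{F^*}LD_G=A^*P_{\ker D_{A^*}}$ (using $A^*(\ker D_{A^*})\subset\ker D_A$).

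The key identification is $A_\nu^*=P_{\ker D_{A^*}}A=AP_{\ker D_A}=\cA_{1,0}$. The intertwining $AD_A=D_{A^*}A$ gives $A\sD_A\subset\sD_{A^*}$, so $P_{\ker D_{A^*}}Af=0$ for $f\in\sD_A$; and $A(\ker D_A)\subset\ker D_{A^*}$ gives $P_{\ker D_{A^*}}Af=Af=AP_{\ker D_A}f$ for $f\in\ker D_A$. Since $Z(0)=0$ has $\sD_{Z(0)}=\sD_A$ and $\sD_{Z^*(0)}=\sD_{A^*}$, the function $Z$ equals its own pure part; by Proposition \ref{PURE} this pure part coincides with $\Phi_{A_\nu^*}=\Phi_{\cA_{1,0}}$, which is the first assertion.

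For the equivalences, I would observe that $A_{1,0}=P_{\ker D_A}A\uphar\ker D_A$ is isometric iff $A(\ker D_A)\subset\ker D_A$; since $A(\ker D_A)=\ker D_{A^*}$, this amounts to $\ker D_{A^*}\subset\ker D_A$, i.e.\ (ii). Combined with Theorem \ref{RELATT}(2), which gives that $A_{1,0}$ is completely non-unitary, isometric is equivalent to being a unilateral shift, yielding (i) $\Leftrightarrow$ (ii). Under (ii), $A^*\sD_A\subset A^*\sD_{A^*}\subset\sD_A$, so $\cA_{1,0}^*f=P_{\ker D_A}A^*f=0$ for $f\in\sD_A$; inserting this together with $P_{\sD_{A^*}}f=f$ (valid since $\sD_A\subset\sD_{A^*}$) into the characteristic function formula collapses everything to $\Phi_{\cA_{1,0}}(\lambda)f=\lambda f$, which is (iii). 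Conversely (iii) $\Rightarrow$ (ii) because $\ran Z(\lambda)\subseteq\sD_{A^*}$ forces $\sD_A\subset\sD_{A^*}$. The co-shift case is obtained by applying the shift case to $A^*$ via $\Phi_{A^*}(\lambda)=\Phi_A^*(\bar\lambda)$, which sends the M\"obius parameter of $\Phi_A$ to $Z^*(\bar\lambda)$.

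The main obstacle will be the identification $A_\nu^*=\cA_{1,0}$: Theorem \ref{TT1} naturally produces $A_\nu=A^*P_{\ker D_{A^*}}$, and one must carefully exploit the intertwining $AD_A=D_{A^*}A$ on both $\sD_A$ and $\ker D_A$ to rewrite its adjoint in the form $AP_{\ker D_A}$, together with verifying $D_{\cA_{1,0}}=P_{\sD_A}$ and $D_{\cA_{1,0}^*}=P_{\sD_{A^*}}$ so that $\Phi_{\cA_{1,0}}\in{\bf S}(\sD_A,\sD_{A^*})$ as required.
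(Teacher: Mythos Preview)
Your proposal is correct and follows essentially the same route as the paper: start from the canonical simple conservative realization $\Sigma$ of $\Phi_A$, read off the $F,G,L$ parametrization (inclusion of $\sD_A$, $P_{\sD_{A^*}}$, and $A^*\uphar\ker D_{A^*}$), apply Theorem~\ref{TT1} to obtain the simple conservative system $\nu$ with state operator $A^*P_{\ker D_{A^*}}$, and then identify $A_\nu^*=P_{\ker D_{A^*}}A=AP_{\ker D_A}=\cA_{1,0}$ via the intertwining relations; the proof of the equivalences (i)--(iii) is likewise the same as the paper's.

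The only minor difference is that the paper, rather than invoking Proposition~\ref{PURE}, simply writes out the transfer function of $\nu$ as
\[
Z(\lambda)=\lambda\,P_{\sD_{A^*}}\bigl(I_\sH-\lambda A^*P_{\ker D_{A^*}}\bigr)^{-1}\uphar\sD_A
\]
and recognizes this literally as $\Phi_{\cA_{1,0}}$ (using $\cA_{1,0}\uphar\sD_A=0$, $D_{\cA_{1,0}}=P_{\sD_A}$, $D_{\cA_{1,0}^*}=P_{\sD_{A^*}}$, $\cA_{1,0}^*=A^*P_{\ker D_{A^*}}$). Proposition~\ref{PURE} as stated only yields \emph{coincidence}; since the theorem asserts genuine equality, it is cleaner to do what the paper does and compare the two formulas directly --- which is exactly the computation you flag in your last paragraph, so you already have all the ingredients.
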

\begin{proof} The system
$$\Sigma=\left\{\begin{bmatrix}-A& D_{A^*} \cr D_{A}& A^*
\end{bmatrix};  \sD_{A},\sD_{A^*},\sH\right\}$$
 is conservative and simple and its transfer function
\[
\Phi(\lambda)=\left(-A+\lambda D_{A^*}(I_\sH-\lambda
A^*)^{-1}D_{A}\right)\uphar\sD_{A}
\]
is the characteristic function of $A$. Let
 $F$ and $G^*$ be the embedding  of the subspaces $\sD_{A}$ and
$\sD_{A^*}$ into $\sH$, respectively. It follows that
\[
D_{F^*}=P_{\ker D_{A}},\;D_{G}=P_{\ker D_{A^*}}.
\]
Let $L=A^*\uphar\ker D_{A^*}$. Then
\[
A^*=A^*P_{\sD_{A^*}}+A^*P_{\ker D_{A^*}}=-F(-A^*)G+D_{F^*}LD_G
\]
 Let
\[
\Phi(\lambda)=\Phi(0)+D_{\Phi^*(0)}Z(\lambda)(I+\Phi^*(0)Z(\lambda))^{-1}D_{\Phi(0)},\;\lambda\in\dD
\]
be the M\"obius representation of the function $\Phi(\lambda)$. By
Theorem \ref{TT1} the system
\[
\nu=\left\{\begin{bmatrix}0& P_{\sD_{A^*}}\cr I_{\sD_A}&A^*P_{\ker
D_{A^*}}\end{bmatrix};\sD_{A},\sD_{A^*},\sH \right\}
\]
is conservative and simple and its transfer function is the function
$Z(\lambda)$, i.e.,
\[
Z(\lambda)=\lambda P_{\sD_{A^*}}\left(I_\sH-\lambda A^*P_{\ker
D_{A^*}}\right)^{-1}\uphar\sD_{A},\; |\lambda|<1.
\]
This function is exactly the Sz.-Nagy--Foias characteristic function
of the partial isometry $\cA_{1,0}=AP_{\ker\sD_{A}}$.

Suppose $A_{1,0}=P_{\ker D_{A}}A\uphar\ker D_A$ is a unilateral
shift. Since $A\ker D_A=\ker D_{A^*}$, we have $\ker
D_{A^*}\subset\ker D_A$. Equivalently $\sD_{A}\subset\sD_{A^*}$.
Hence,
\[
P_{\ker D_{A^*}}\uphar\sD_A=0 \quad\mbox{and}\quad (A^* P_{\ker
D_{A^*}})^n\uphar\sD_{A}=0\quad\mbox{for all}\quad n\in\dN.
\]
Therefore,
\[
Z(\lambda)=\lambda P_{\sD_{A^*}}\uphar \sD_{A}=\lambda I_{\sD_A}.
\]
Conversely, suppose $Z(\lambda)=\lambda I_{\sD_A}$. Then
$\sD_{A}\subset \sD_{A^*}$ $\Rightarrow$ $\ker D_{A}\supset\ker
D_{A^*}$. It follows
\[
A\ker D_{A}\subset \ker D_{A}\Rightarrow A_{1,0}\quad\mbox{is
isometry}.
\]
Since the operator $A_{1,0}$ is completely non-unitary, it is a
unilateral shift.
\end{proof}
\begin{corollary}
\label{COO} Let $A$ be a completely non-unitary contraction in a
separable Hilbert space $\sH$ and let $||A\uphar\sD_{A}||<1(\iff \ran D_A=\cran D_A)$. Then
the following statements are equivalent
\begin{enumerate}
\def\labelenumi{\rm (\roman{enumi})}
\item $A\in C_{\cdot\,0}$ (respect.,$ A\in C_{0\,\cdot})$,
\item $\cA_{1,0}\in C_{\cdot\,0}$ (respect., $ \cA_{1,0}\in C_{0\,\cdot})$.
\end{enumerate}
\end{corollary}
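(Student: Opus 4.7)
The plan is to reduce the class membership of $A$ and of $\cA_{1,0}$ to inner/co-inner properties of their respective Sz.-Nagy--Foias characteristic functions, and then to link those two characteristic functions through the M\"obius transformation.

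First, both $A$ and, by Theorem~\ref{RELATT}(2), $\cA_{1,0}$ are completely non-unitary contractions. Hence by the Sz.-Nagy--Foias classification recalled in Section~2, $A\in C_{\cdot\,0}$ (respectively $A\in C_{0\,\cdot}$) is equivalent to $\Phi_A$ being inner (respectively co-inner), and the analogous equivalence holds with $A$ replaced by $\cA_{1,0}$ and $\Phi_A$ by $\Phi_{\cA_{1,0}}$.

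Next, Theorem~\ref{EXX} identifies $\Phi_{\cA_{1,0}}$ with the M\"obius parameter $Z(\lambda)$ of $\Phi_A$. To transfer the inner/co-inner property between $\Phi_A$ and $Z$ I would apply the last assertion of Corollary~\ref{zero}, whose hypothesis is $\|\Phi_A(0)\uphar\sD_{\Phi_A(0)}\|<1$. Since $\Phi_A(0)=-A\uphar\sD_A$ is a pure contraction, the self-adjoint operator $D_{\Phi_A(0)}$ has trivial kernel and therefore dense range, so $\sD_{\Phi_A(0)}=\sD_A$. Consequently $\|\Phi_A(0)\uphar\sD_{\Phi_A(0)}\|=\|A\uphar\sD_A\|$, and the standing hypothesis $\|A\uphar\sD_A\|<1$ (equivalent to $\ran D_A=\cran D_A$, since this closed-range condition forces $D_A\uphar\sD_A$ to be bounded below) supplies exactly what is needed to invoke Corollary~\ref{zero}(2).

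Chaining the equivalences gives
\[
A\in C_{\cdot\,0}\iff\Phi_A\text{ inner}\iff Z\text{ inner}\iff\Phi_{\cA_{1,0}}\text{ inner}\iff\cA_{1,0}\in C_{\cdot\,0},
\]
and the strictly parallel chain for co-inner functions yields $A\in C_{0\,\cdot}\iff\cA_{1,0}\in C_{0\,\cdot}$. There is no real obstacle in the argument beyond the verification of the hypothesis of Corollary~\ref{zero}(2); the remainder is a direct composition of Theorems~\ref{RELATT} and~\ref{EXX} with the Sz.-Nagy--Foias characterization of $C_{\cdot\,0}$ and $C_{0\,\cdot}$ classes.
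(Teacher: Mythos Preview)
Your proof is correct and follows essentially the same route as the paper's: the paper simply notes $\Phi_A(0)=-A\uphar\sD_A$ and then strings together the Sz.-Nagy--Foias $C_{\cdot\,0}/C_{0\,\cdot}$ characterization, Corollary~\ref{zero}, and Theorem~\ref{EXX} to obtain the same chain of equivalences. Your version is more explicit in verifying the hypothesis $\|\Phi_A(0)\uphar\sD_{\Phi_A(0)}\|<1$ of Corollary~\ref{zero}(2), but the substance is identical.
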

\begin{proof} By \eqref{Char.funct.} we have
$\Phi_A(0)=-A\uphar\sD_A.$  Then in accordance with \cite{SF},
Corollary \ref{zero}, and Theorem \ref{EXX} we get the equivalences
\[
\begin{array}{l}
A\in C_{\cdot\,0}\,\,(C_{0\,\cdot})\iff \Phi_A(\lambda)\quad\mbox{is
inner (co-inner)}\quad\iff Z(\lambda)\quad\mbox{is inner
(co-inner)}\\
\iff \cA_{1,0}\in C_{\cdot\,0}\,\,(C_{0\,\cdot}).
\end{array}
\]

\end{proof}
\section{Realizations of the Schur iterates}
\subsection{Realizations of the first Schur iterate}
\begin{proposition}
\label{real} Let $\sH$, $\sL$, $\sK$ be Hilbert spaces and let $F\in
\bL(\sL,\sH)$, $G\in \bL(\sH,\sK)$  and $L\in\bL(\sD_G,\sD_{F^*})$
be contractions. Let $Z_\nu(\lambda)$ be the transfer function of
the system
\begin{equation}
\label{NU} \nu=\left\{\begin{bmatrix}0&G\cr F&
D_{F^*}LD_G\end{bmatrix};\sL,\sK, \sH\right\}
\end{equation}
Then the function $\Gamma(\lambda)=\lambda^{-1}Z_\nu(\lambda)$ is
the transfer function of the passive systems
\[
\eta_1=\left\{\begin{bmatrix}GF &GD_{F^*}\cr LD_GF
&LD_GD_{F^*}\end{bmatrix};\sL,\sK,\sH\right\},\;
\eta_2=\left\{\begin{bmatrix}GF&GD_{F^*}\wt L\cr  D_G F&D_G
D_{F^*}\wt L\end{bmatrix};\sL,\sK, \sH\right\},
\]
where $\wt L=L P_{\sD_G}$.

Suppose that the subspaces $\sH_{\zeta_1}=D_{F^*}$  and
$\sH_{\zeta_2}=\sD_G$ are nontrivial. Then the transfer functions of
the passive systems
\begin{equation}
\label{ZETA12} \zeta_1=\left\{\begin{bmatrix}GF &GD_{F^*}\cr LD_GF
&LD_GD_{F^*}\end{bmatrix};\sL,\sK,\sH_{\zeta_1}\right\},\;
\zeta_2=\left\{\begin{bmatrix}GF&GD_{F^*}\wt L\cr  D_G F&D_G
D_{F^*}\wt L\end{bmatrix};\sL,\sK,\sH_{\zeta_2}\right\}
\end{equation}
are equal to $\Gamma(\lambda).$ Moreover, for the orthogonal
complements to the controllable and observable subspaces of the
systems $\nu$, $\zeta_1$, and $\zeta_2$ hold the following relations
\begin{equation}
\label{RELAT}
\begin{array}{l}
\left(\sH^{c}_\nu\right)^\perp=\left(\sH^{c}_{\zeta_1}\right)^\perp\cap
\ker
F^*,\;\left(\sH^{o}_\nu\right)^\perp=\left(\sH^{o}_{\zeta_2}\right)^\perp\cap
\ker G,\\
D_G\left(\sH^{c}_{\zeta_2}\right)^\perp\subset
\left(\sH^{c}_\nu\right)^\perp,\;D_{F^*}\left(\sH^{o}_{\zeta_1}\right)^\perp\subset
\left(\sH^{o}_{\nu}\right)^\perp.
\end{array}
\end{equation}
If the operators $G^*$ and $F$ are isometries, then
\begin{equation}
\label{RELAT1}
\left(\sH^{o}_{\zeta_1}\right)^\perp=\left(\sH^{o}_{\nu}\right)^\perp\cap
\ker{F^*},\; \left(\sH^{c}_{\zeta_2}\right)^\perp=
\left(\sH^{c}_\nu\right)^\perp\cap\ker G.
\end{equation}
\end{proposition}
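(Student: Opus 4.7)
The plan is to carry out the proof in three stages: (i) verify that $\Gamma(\lambda)$ is realized simultaneously by the passive systems $\eta_1,\eta_2$ and by their state-space reductions $\zeta_1,\zeta_2$; (ii) establish intertwining identities between the controllability/observability data of $\nu$ and those of $\zeta_1,\zeta_2$; (iii) pass to orthogonal complements, where the main subtlety lies.

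For (i), first write $Z_\nu(\lambda)=\lambda G(I-\lambda D_{F^*}LD_G)^{-1}F$ from the definition, so $\Gamma(\lambda)=G(I-\lambda D_{F^*}LD_G)^{-1}F$. The resolvent identity $(I-\lambda XY)^{-1}X=X(I-\lambda YX)^{-1}$ applied with $X=D_{F^*}$ and $Y=LD_G$ reduces the transfer function of $\eta_1$ to $\Gamma$; for $\eta_2$ one additionally uses $\wt L D_G=L D_G$ (from $P_{\sD_G}D_G=D_G$). Passivity of $\eta_1,\eta_2$ follows from the factorizations $\begin{bmatrix}G\\ LD_G\end{bmatrix}\begin{bmatrix}F & D_{F^*}\end{bmatrix}$ and $\begin{bmatrix}G\\ D_G\end{bmatrix}\begin{bmatrix}F & D_{F^*}\wt L\end{bmatrix}$, since the row factors are co-isometric/contractive via $FF^*+D_{F^*}^2=I$ and the column factors are contractive/isometric via $G^*G+D_G^2=I$. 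Because in $\eta_1$ the operators $B,A$ map into $\sD_{F^*}$ and $C=GD_{F^*}$ annihilates $\ker D_{F^*}$, restricting the state space to $\sD_{F^*}$ yields $\zeta_1$ with the same transfer function; the same argument on $\sD_G$ yields $\zeta_2$.

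For (ii), the central identities are $A_\nu B_\nu=D_{F^*}B_{\zeta_1}$ together with $A_\nu\circ D_{F^*}=D_{F^*}\circ A_{\zeta_1}$ on $\sD_{F^*}$, which induct up to $A_\nu^{n+1}B_\nu=D_{F^*}A_{\zeta_1}^{n}B_{\zeta_1}$, so that $\sH^c_\nu=\overline{\ran F+D_{F^*}\sH^c_{\zeta_1}}$. Dually, $A_{\zeta_2}^{n}B_{\zeta_2}=D_GA_\nu^{n}B_\nu$ gives $\sH^c_{\zeta_2}=\overline{D_G\sH^c_\nu}$, and the adjoint versions produce $\sH^o_{\zeta_1}=\overline{D_{F^*}\sH^o_\nu}$ and the corresponding formula for $\sH^o_{\zeta_2}$. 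The inclusions $D_G(\sH^c_{\zeta_2})^\perp\subset(\sH^c_\nu)^\perp$ and $D_{F^*}(\sH^o_{\zeta_1})^\perp\subset(\sH^o_\nu)^\perp$ are then immediate: if $g\in(\sH^c_{\zeta_2})^\perp$ then $g\perp D_Gh$ for every $h\in\sH^c_\nu$, so $D_Gg\in(\sH^c_\nu)^\perp$, and the observable side is dual.

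The part I expect to be the real obstacle is (iii), namely the equalities $(\sH^c_\nu)^\perp=(\sH^c_{\zeta_1})^\perp\cap\ker F^*$ and its observable analogue. A naive complementation of $\sH^c_\nu=\overline{\ran F+D_{F^*}\sH^c_{\zeta_1}}$ only gives $(\sH^c_\nu)^\perp=\ker F^*\cap(D_{F^*}\sH^c_{\zeta_1})^\perp$, which carries a parasitic $D_{F^*}$ that must be removed. The key structural fact is the identity $D_{F^*}f=f$ for every $f\in\ker F^*$: indeed, $\ker F^*$ is $D_{F^*}$-invariant by the intertwining $F^*D_{F^*}=D_FF^*$, on that subspace $D_{F^*}^2=I-FF^*=I$, and nonnegativity of the self-adjoint $D_{F^*}$ then forces $D_{F^*}|_{\ker F^*}=I_{\ker F^*}$. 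Using this to cancel a single $D_{F^*}$ inside $F^*(D_GL^*D_{F^*})^n f$ when $f\in\ker F^*$ identifies that expression with $B^*_{\zeta_1}(A^*_{\zeta_1})^{n-1}f$, which yields the first equality in \eqref{RELAT}; the symmetric identity $D_G|_{\ker G}=I_{\ker G}$ handles the second. Finally, under the extra hypotheses of \eqref{RELAT1}, $F^*F=I$ and $GG^*=I$ make $D_{F^*}=(I-FF^*)^{1/2}$ and $D_G=(I-G^*G)^{1/2}$ actual orthogonal projections onto $\sD_{F^*}=\ker F^*$ and $\sD_G=\ker G$, at which point the general fact $(PV)^\perp\cap\ran P=V^\perp\cap\ran P$ for a projection $P$ upgrades the two inclusions to the claimed equalities.
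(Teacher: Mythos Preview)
Your argument is correct and follows essentially the same route as the paper: the transfer-function computations and resolvent manipulations are identical, and the crux of the kernel relations---the identity $D_{F^*}f=f$ for $f\in\ker F^*$ (equivalently $F^*f=0\iff D_{F^*}f=f$)---is exactly what the paper invokes to derive its list of kernel equalities \eqref{corel} by induction. Your factorization argument for passivity of $\eta_1,\eta_2$ is a slight streamlining of the paper's direct defect computations, and your packaging via the subspace formulas $\sH^c_\nu=\overline{\ran F+D_{F^*}\sH^c_{\zeta_1}}$, $\sH^o_{\zeta_1}=\overline{D_{F^*}\sH^o_\nu}$, etc., is a cleaner organization of the same content.
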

\begin{proof} We have
\[
Z_\nu(\lambda)=\lambda G(I_\sH-\lambda D_{F^*}LD_G)^{-1}F.
\]
Hence
\[
\Gamma(\lambda)=\frac{Z_\nu(\lambda)}{\lambda}=G(I_\sH-\lambda
D_{F^*}LD_G)^{-1}F
\]
and $\Gamma(0)=GF$. It follows that
\[
\begin{array}{l}
\Gamma(\lambda)-\Gamma(0)=G(I_\sH-\lambda
D_{F^*}LD_G)^{-1}F-GF=\lambda GD_{F^*}LD_G(I_\sH-\lambda
D_{F^*}LD_G)^{-1}F\\
=\lambda GD_{F^*}(I_\sH-\lambda LD_GD_{F^*})^{-1}LD_GF= \lambda GD_{F^*}(I_\sH-\lambda \wt LD_GD_{F^*})^{-1}\wt LD_GF\\=
\lambda
GD_{F^*}\wt L(I_\sH-\lambda D_G D_{F^*}\wt L)^{-1}D_G F,
\end{array}
\]
\begin{equation}
\label{GAMMN}
\begin{array}{l}
 \Gamma(\lambda)=GF+\lambda GD_{F^*}(I_\sH-\lambda
LD_GD_{F^*})^{-1}LD_GF\\
\qquad\qquad=GF+\lambda GD_{F^*}\wt L(I_\sH-\lambda D_G D_{F^*}\wt
L)^{-1}D_G F.
\end{array}
\end{equation}
The operators
\[
K_1=\begin{bmatrix}GF &GD_{F^*}\cr LD_GF
&LD_GD_{F^*}\end{bmatrix}:\begin{array}{l}\sL\\\oplus\\\sH\\\end{array}\to
\begin{array}{l}\sK\\\oplus\\\sH\\\end{array}
\]
and
\[
K_2=\begin{bmatrix}GF&GD_{F^*}\wt L\cr  D_G F&D_G D_{F^*}\wt
L\end{bmatrix}:\begin{array}{l}\sL\\\oplus\\\sH\end{array}\to
\begin{array}{l}\sK\\\oplus\\\sH\end{array}
\]
are contraction. Actually, let $f\in\sH$ and $h\in\sL$ then one can
check that
\[
\left\|\begin{bmatrix} f\cr
h\end{bmatrix}\right\|^2-\left\|K_1\begin{bmatrix} f\cr
h\end{bmatrix}\right\|^2=||F^*f-D_Fh||^2_\sL+||D_{
L}D_G(D_{F^*}f+Fh)||^2_\sH\ge 0,
\]
\[
\left\|\begin{bmatrix} f\cr
h\end{bmatrix}\right\|^2-\left\|K_2\begin{bmatrix} f\cr
h\end{bmatrix}\right\|^2=||F^*\wt L f-D_{F}h||^2_\sL+||D_{\wt
L}f||^2_\sH\ge 0.
\]
Thus, the systems $\eta_1$, $\eta_2,$ $\zeta_1$, and $\zeta_2$ are
passive and their transfer functions are precisely
$\Gamma(\lambda)$.

Since $\wt L^*\uphar\ker D_{F^*}=0$ and $F^*f=0\iff D_{F^*}f=f$,
$Gh=0\iff D_Gh=h$, by induction one can derive the following
equalities
\begin{equation}
\label{corel} \left\{
\begin{array}{l}
\bigcap\limits_{n\ge 0}\ker\left(F^*(D_{G}L^*D_{F^*})^n\right)=\bigcap\limits_{n\ge 0}\ker\left(F^*(D_G \wt L^*)^{n})\right),\\
\bigcap\limits_{n\ge 0}\ker\left(G(D_{F^*}LD_{G})^n\right)=\bigcap\limits_{n\ge 0}\ker\left(G(D_{F^*}\wt L)^{n}\right),\\
\bigcap\limits_{n\ge 0}\ker\left(F^*D_{G}\wt L^*(D_{F^*}D_{G}\wt
L^*)^n\right)=\bigcap\limits_{n\ge
1}\ker\left(F^*(D_G \wt L^*)^{n})\right),\\
\bigcap\limits_{n\ge 0}\ker\left(GD_{F^*}(\wt
LD_{G}D_{F^*})^n\right)=\bigcap\limits_{n\ge
0}\ker\left(G(D_{F^*}\wt L)^{n}\right),\\
\bigcap\limits_{n\ge 0}\ker\left(F^*D_G(\wt
L^*D_{F^*}D_G)^n\right)=\bigcap\limits_{n\ge 0}\ker\left(F^*(D_G\wt
L^*)^nD_G\right),\\
\bigcap\limits_{n\ge 0}\ker\left(GD_{F^*}\wt L(D_GD_{F^*}\wt
L)^n\right)=\bigcap\limits_{n\ge 1}\ker \left(G(D_{F^*}\wt
L)^n\right).
\end{array}
\right.
\end{equation}
From \eqref{corel} follow the relations \eqref{RELAT} and
\eqref{RELAT1}.
\end{proof}
\begin{theorem}
\label{Schuriso} Let the system
\[
\tau=\left\{\begin{bmatrix}D&D_{D^*}G\cr FD_D&-FD^*G+D_{F^*}LD_G
\end{bmatrix};\sM,\sN,\sH\right\}
\]
be conservative and simple and let $\Theta(\lambda)$ be its transfer
function. Suppose that the first Schur iterate $\Theta_1(\lambda)$
of $\Theta$ is non-unitary constant. Then the systems
\begin{equation}
\label{newzeta}
\begin{array}{l}
\zeta_1=\left\{\begin{bmatrix} GF& G\cr LD_GF&
LD_G\end{bmatrix};\sD_D,\sD_{D^*},\sD_{F^*}\right\},\\
\zeta_2=\left\{\begin{bmatrix} GF& GL\cr D_GF&
D_GL\end{bmatrix};\sD_D,\sD_{D^*},\sD_{G}\right\}
\end{array}
\end{equation}
are conservative and simple and their transfer functions are equal
to $\Theta_1(\lambda)$.
\end{theorem}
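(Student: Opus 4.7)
The plan is to combine two results already at hand: Theorem~\ref{TT1}(1), which supplies a simple conservative realization of the M\"obius parameter $Z(\lambda)$ of $\Theta$, and Proposition~\ref{real}, which manufactures passive realizations of $\Gamma(\lambda):=Z(\lambda)/\lambda$. Since $\Theta_1=Z/\lambda$ by definition of the Schur iterate, together these handle the transfer-function assertion. What is specific to the present conservative setting is to promote passivity to conservativeness and to deduce simplicity of $\zeta_1,\zeta_2$ from that of the intermediate system produced by Theorem~\ref{TT1}.

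First apply Theorem~\ref{TT1}(1) to $\tau$: the system
\[
\nu=\left\{\begin{bmatrix}0&G\cr F&D_{F^*}LD_G\end{bmatrix};\sD_D,\sD_{D^*},\sH\right\}
\]
is simple and conservative, with transfer function $Z$. Conservativeness of $\tau$, via Theorem~\ref{ParContr}(1)--(3), forces $F\colon\sD_D\to\sH$ and $G^*\colon\sD_{D^*}\to\sH$ to be isometries and $L\colon\sD_G\to\sD_{F^*}$ to be unitary; consequently $D_{F^*}=P_{\sD_{F^*}}$ and $D_G=P_{\sD_G}$ are orthogonal projections, acting as the identity on their respective ranges. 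This observation reconciles the matrices of $\zeta_1,\zeta_2$ in \eqref{newzeta} with those used in Proposition~\ref{real}, so that proposition identifies both systems as passive realizations of $\Gamma=\Theta_1$.

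To check that $T_{\zeta_1}$ is unitary, fix $(h,f)\in\sD_D\oplus\sD_{F^*}$ and set $u:=Fh+f$. The identity $\|Gu\|^2+\|D_Gu\|^2=\|u\|^2$ together with the isometry of $L$ gives $\|T_{\zeta_1}(h,f)\|^2=\|u\|^2$; since $F$ is an isometry and $f\in\ker F^*\perp\ran F$, this equals $\|h\|^2+\|f\|^2$, so $T_{\zeta_1}$ is isometric. For surjectivity, given $(\varphi,g)\in\sD_{D^*}\oplus\sD_{F^*}$ the vector $u:=G^*\varphi+L^*g$ satisfies $Gu=\varphi$ (using $GG^*=I$ and $GL^*g=0$, the latter because $L^*g\in\sD_G=\ker G$) and $LD_Gu=g$, and the orthogonal decomposition $\sH=\ran F\oplus\sD_{F^*}$ supplies the pre-image $h=F^*u$, $f=u-Fh$. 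The argument for $\zeta_2$ is dual, using unitarity of $L\colon\sD_G\to\sD_{F^*}$ to handle the output side in place of the isometry $F$.

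For simplicity, since $F$ and $G^*$ are isometries, both \eqref{RELAT} and \eqref{RELAT1} of Proposition~\ref{real} apply. The containment $(\sH^c_{\zeta_1})^\perp\subset\sD_{F^*}=\ker F^*$ reduces the first relation in \eqref{RELAT} to $(\sH^c_\nu)^\perp=(\sH^c_{\zeta_1})^\perp$, while \eqref{RELAT1} gives $(\sH^o_{\zeta_1})^\perp\subset(\sH^o_\nu)^\perp$; hence
\[
(\sH^c_{\zeta_1})^\perp\cap(\sH^o_{\zeta_1})^\perp\subset(\sH^c_\nu)^\perp\cap(\sH^o_\nu)^\perp=\{0\}
\]
by simplicity of $\nu$. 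The symmetric argument, using $\sH_{\zeta_2}=\sD_G=\ker G$ together with the second relations in \eqref{RELAT}--\eqref{RELAT1}, handles $\zeta_2$. The main obstacle is the surjectivity half of the unitarity check: isometry is a brief norm computation exploiting $\ran F\perp\sD_{F^*}$, but surjectivity rests essentially on $L$ being unitary between $\sD_G$ and $\sD_{F^*}$---precisely the ingredient that distinguishes conservative $\tau$ from merely passive ones, and the reason the theorem is not a direct corollary of Proposition~\ref{real}.
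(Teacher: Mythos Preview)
Your argument is correct and follows the paper's approach exactly: deduce from conservativeness that $F,G^*$ are isometries and $L$ is unitary, verify unitarity of $T_{\zeta_1},T_{\zeta_2}$ directly (the paper just says ``one can directly check''), and read off simplicity from \eqref{RELAT} and \eqref{RELAT1}. The only step you leave implicit, and which the paper spells out, is that the hypothesis on $\Theta_1$ forces $\sD_{F^*}$ and $\sD_G$ to be nontrivial via Theorem~\ref{ParContr}(3), which is needed before invoking Proposition~\ref{real}.
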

\begin{proof}
Because the system $\nu$ is conservative, the operators $F$ and
$G^*$ are isometries. Since $\Theta_1(\lambda)$ is non-unitary
constant, from \eqref{GAMMN} it follows that the operator $GF$ is
non-unitary. Hence by Theorem \ref{ParContr} the subspaces
$\sD_{F^*}$ and $\sD_G$ are nontrivial, and the operator
$L\in\bL(\sD_G,\sD_{F^*})$ is unitary. In addition, $\ker
F^*=\sD_{F^*}$, $\ker G=\sD_G$, and the operators $D_{F^*}$ and
$D_{G}$ are orthogonal projections in $\sH$ onto $\ker F^*$ and
$\ker G$, respectively. One can directly check that the operators
\[
\begin{bmatrix} GF& G\cr LD_GF&
LD_G\end{bmatrix}:\begin{array}{l}\sD_{D}\\\oplus\\\sD_{F^*}\end{array}\to
\begin{array}{l}\sD_{D^*}\\\oplus\\\sD_{F^*}\end{array},\;
\begin{bmatrix} GF& GL\cr D_GF&
D_GL\end{bmatrix}:\begin{array}{l}\sD_D\\\oplus\\\sD_G\end{array}\to
\begin{array}{l}\sD_{D^*}\\\oplus\\\sD_G\end{array}
\]
are unitary. Hence, the systems $\zeta_1$ and $\zeta_2$ are
conservative. Relation \eqref{RELAT} yields in our case that
\[
\left(\sH^{c}_\nu\right)^\perp=\left(\sH^{c}_{\zeta_1}\right)^\perp,\;
\left(\sH^{o}_\nu\right)^\perp=\left(\sH^{o}_{\zeta_2}\right)^\perp.
\]
Taking into account \eqref{RELAT1} and the simplicity of $\nu$ we
get that the systems $\zeta_1$ and $\zeta_2$ are simple.
\end{proof}
\begin{theorem}
\label{IITER} Let $\Theta(\lambda)\in{\bf S}(\sM,\sN)$,
$\Gamma_0=\Theta(0)$ and let $\Theta_1(\lambda)$ be the first Schur
iterate of $\Theta$. Suppose
\[
\tau=\left\{\begin{bmatrix}\Gamma_0&C\cr
B&A\end{bmatrix};\sM,\sN,\sH\right\}
\]
is a simple conservative system with transfer function $\Theta$.
Then the simple conservative system
\[
\nu=\left\{\begin{bmatrix}0&D^{-1}_{\Gamma^*_0}C\cr
D^{-1}_{A^*}B&AP_{\ker
D_{A}}\end{bmatrix},\sD_{\Gamma_0},\sD_{\Gamma^*_0},\sH \right\}
\]
has the transfer function $\lambda \Theta_1(\lambda)$ while the
simple conservative systems
\begin{equation}\label{RealFirst}
\begin{array}{l}
\zeta_{1}=\left\{\begin{bmatrix}D^{-1}_{\Gamma^*_0}C(D^{-1}_{\Gamma_0}B^*)^*&D^{-1}_{\Gamma^*_0}C\uphar\ker
D_{A^*}\cr AP_{\ker D_A}D^{-1}_{A^*}B&P_{\ker D_{A^*}}A\uphar\ker
D_{A^*}\end{bmatrix};\sD_{\Gamma_0},\sD_{\Gamma^*_0},\ker
D_{A^*}\right\},\\
\zeta_{2}=\left\{\begin{bmatrix}D^{-1}_{\Gamma^*_0}C(D^{-1}_{\Gamma_0}B^*)^*&
D^{-1}_{\Gamma^*_0}CA\uphar\ker {D_A}\cr P_{\ker
D_A}D^{-1}_{A^*}B&P_{\ker D_A}A\uphar\ker
D_A\end{bmatrix};\sD_{\Gamma_0},\sD_{\Gamma^*_0},\ker D_A\right\}
 \end{array}
\end{equation}
have transfer functions $\Theta_1(\lambda)$. Here the operators
$D^{-1}_{\Gamma_0},$ $D^{-1}_{\Gamma^*_0},$ and $D^{-1}_{A^*}$ are
the Moore--Penrose pseudo-inverses.
\end{theorem}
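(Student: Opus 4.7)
The plan is a two-step reduction. First, identify $\lambda\Theta_1(\lambda)$ with the M\"obius parameter of $\Theta$ and realize it on the original state space $\sH$ via Theorem~\ref{TT1}; second, compress this realization to the smaller state spaces $\ker D_{A^*}$ and $\ker D_A$ via Theorem~\ref{Schuriso}.

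I would begin by parametrizing the unitary block operator $T_\tau$ through Theorem~\ref{ParContr}, obtaining unique contractions $F\in\bL(\sD_{\Gamma_0},\sH)$, $G\in\bL(\sH,\sD_{\Gamma^*_0})$ and $L\in\bL(\sD_G,\sD_{F^*})$ with
\[
B=FD_{\Gamma_0},\qquad C=D_{\Gamma^*_0}G,\qquad A=-F\Gamma_0^*G+D_{F^*}LD_G.
\]
Since $T_\tau$ is unitary, $F$ and $G^*$ are isometries and $L$ is unitary, while Proposition~\ref{uncion} identifies
\[
D_{F^*}=P_{\ker D_{A^*}},\qquad D_G=P_{\ker D_A},\qquad L=A\uphar\ker D_A.
\]
Inverting $B=FD_{\Gamma_0}$ and $C=D_{\Gamma^*_0}G$ by Moore--Penrose pseudo-inverses gives $F=D^{-1}_{A^*}B$ (restricted to $\sD_{\Gamma_0}$) and $G=D^{-1}_{\Gamma^*_0}C$; the inclusion $A\ker D_A\subset\ker D_{A^*}$ then yields $D_{F^*}LD_G=P_{\ker D_{A^*}}AP_{\ker D_A}=AP_{\ker D_A}$.

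Theorem~\ref{TT1}(1) now shows that the M\"obius parameter $Z(\lambda)$ of $\Theta$ is the transfer function of the simple conservative system with block operator $\begin{bmatrix}0&G\\ F&D_{F^*}LD_G\end{bmatrix}$ on $\sH$, which, after substituting the expressions above, is precisely the system $\nu$ of the statement. Since by the definition of the first Schur iterate $Z(\lambda)=\lambda\Theta_1(\lambda)$, the assertion about $\nu$ follows. Applying Theorem~\ref{Schuriso} to the same data then furnishes simple conservative realizations $\zeta_1$ on $\sD_{F^*}=\ker D_{A^*}$ and $\zeta_2$ on $\sD_G=\ker D_A$ with transfer function $\Theta_1(\lambda)$. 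Substituting $F$, $G$, $L$, $D_{F^*}$, $D_G$ into the matrix entries displayed in \eqref{newzeta} produces \eqref{RealFirst}, modulo two identifications: the equality $(D^{-1}_{\Gamma_0}B^*)^*=F$, obtained from $B=FD_{\Gamma_0}$ by taking adjoints and applying the pseudo-inverse, which rewrites $GF$ as $D^{-1}_{\Gamma^*_0}C(D^{-1}_{\Gamma_0}B^*)^*$; and the equality $AP_{\ker D_A}\uphar\ker D_{A^*}=P_{\ker D_{A^*}}A\uphar\ker D_{A^*}$, verified by decomposing $x\in\ker D_{A^*}$ as $x_1+x_2$ with $x_1\in\ker D_A$ and $x_2\in\sD_A$ and noting that $Ax_1\in\ker D_{A^*}$ whereas $Ax_2\in\sD_{A^*}$.

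The main obstacle is not conceptual but bookkeeping: one must carefully track Moore--Penrose pseudo-inverses across various subspace restrictions and confirm that the restrictions in $\zeta_1$ and $\zeta_2$ match the declared state spaces. A secondary subtlety is the non-unitary-constant hypothesis of Theorem~\ref{Schuriso}: when $\Theta_1$ degenerates to a unitary constant, $\sD_{F^*}$ or $\sD_G$ collapses and $\zeta_1$ or $\zeta_2$ becomes trivial, a case that must be handled separately using Proposition~\ref{PURE}.
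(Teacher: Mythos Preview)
Your proposal is correct and follows essentially the same route as the paper: parametrize $T_\tau$ via Theorem~\ref{ParContr}, identify $F,G,L,D_{F^*},D_G$ through Proposition~\ref{uncion}, invoke Theorem~\ref{TT1} to obtain $\nu$, then Theorem~\ref{Schuriso} to obtain $\zeta_1,\zeta_2$, and substitute back; even the auxiliary identity $AP_{\ker D_A}\uphar\ker D_{A^*}=P_{\ker D_{A^*}}A\uphar\ker D_{A^*}$ is handled as in the paper. Your remark on the degenerate case where $\Theta_1$ is a unitary constant is a welcome addition that the paper's proof leaves implicit.
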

\begin{proof}
Let
\[
\begin{array}{l}
T=\begin{bmatrix}\Gamma_0&C\cr
B&A\end{bmatrix}=\begin{bmatrix}\Gamma_0&D_{\Gamma^*_0}G\cr
FD_{\Gamma_0} &-F\Gamma^*_0G+D_{F^*}LD_{G}\end{bmatrix}=\\
\qquad\quad=\begin{bmatrix} -KA^*M+D_{K^*}XD_{M}& KD_{A}\cr D_{A^*}M
&A\end{bmatrix}:
\begin{array}{l}\sM\\\oplus\\\sH\end{array}\to
\begin{array}{l}\sN\\\oplus\\\sH\end{array}.
\end{array}
\]
Then $G=D^{-1}_{\Gamma^*_0}C$, $F^*=D^{-1}_{\Gamma_0}B^*,$
$F=M\uphar\sD_{\Gamma_0},$ $M=D^{-1}_{A^*}B$. According to
Proposition \ref{uncion} we have
\[
D_{F^*}=P_{\ker D_{A^*}}, \;D_G=P_{\ker D_A},\; L=A\uphar\ker D_A.
\]
Hence
\[
\begin{array}{l}
GF=D^{-1}_{\Gamma^*_0}C(D^{-1}_{\Gamma_0}B^*)^*,\;
D_GD_{F^*}L=P_{\ker D_{A}}A\uphar\ker D_{A},\\
D_GF=P_{\ker D_A}M=P_{\ker D_A}D^{-1}_{A^*}B,\;
GD_{F^*}L=D^{-1}_{\Gamma^*_0}CP_{\sD_{A}}A\uphar\ker D_{A},\\
LD_G\uphar\ker D_{A^*}=AP_{\ker D_A}\uphar\ker D_{A^*},\;
LD_GF=AP_{\ker D_A}D^{-1}_{A^*}B.
\end{array}
\]
Note that if $f\in\ker D_{A^*}$ then
\[
AP_{\ker D_A}f=P_{\ker D_{A^*}}AP_{\ker D_A}f=P_{\ker
D_{A^*}}Af-P_{\ker D_{A^*}}AP_{\sD_A}f=P_{\ker D_{A^*}}Af.
\]
Now the statement of theorem follows from Theorem \ref{TT1} and
Theorem \ref{Schuriso}.
\end{proof}

\begin{remark}
\label{RREM1} Since $F^*=D^{-1}_{\Gamma_0}B^*$, we get
$F=\left(D^{-1}_{\Gamma_0}B^* \right)^*\in\bL(\sD_{\Gamma_0},\sH).$
Hence
\[
D^{-1}_{A^*}B\uphar\sD_{\Gamma_0}=\left(D^{-1}_{\Gamma_0}B^*
\right)^*.
\]
 Using the Hilbert spaces and operators defined by \eqref{hnm}
and \eqref{Anm}, we get
\[
P_{\ker D_A}D^{-1}_{A^*}B\uphar\sD_{\Gamma_0}=P_{1,0}
D^{-1}_{A^*}B\uphar\sD_{\Gamma_0}=\left(D^{-1}_{\Gamma_0}\left(B^*\uphar\sH_{1,0}\right)
\right)^*\in\bL(\sD_{\Gamma_0},\sH_{1,0}).
\]
In addition
\[
D^{-1}_{\Gamma^*_0}C(D^{-1}_{\Gamma_0}B^*)^*=\Gamma_1\in\bL(\sD_{\Gamma_0},\sD_{\Gamma^*_0}).
\]
 So,
\begin{equation}
\label{n1}
\begin{array}{l}
\zeta_{1}=\left\{\begin{bmatrix}\Gamma_1& D^{-1}_{\Gamma^*_0}C\cr
A\left(D^{-1}_{\Gamma_0}\left(B^*\uphar\sH_{1,0}\right)
\right)^*&A_{0,1}\end{bmatrix};\sD_{\Gamma_0},\sD_{\Gamma^*_0},\sH_{0,1}\right\},\\
\zeta_{2}=\left\{\begin{bmatrix}\Gamma_1& D^{-1}_{\Gamma^*_0}CA\cr
\left(D^{-1}_{\Gamma_0}\left(B^*\uphar\sH_{1,0}\right)
\right)^*&A_{1,0}\end{bmatrix};\sD_{\Gamma_0},\sD_{\Gamma^*_0},\sH_{1,0}\right\}.
 \end{array}
\end{equation}
It follows that
\[
\begin{array}{l}
\ran\left(D^{-1}_{\Gamma^*_0}C\uphar\sH_{1,0}\right)\subset\ran
D_{\Gamma^*_1},\\
\ran\left(D^{-1}_{\Gamma_0}B^*\uphar\sH_{1,0} \right)\subset\ran
D_{\Gamma_1}
\end{array}
\]
\end{remark}
\subsection{Schur iterates of the characteristic function}
\begin{theorem}
\label{ITERATES} Let $A$ be a completely non-unitary contraction in
a separable Hilbert space $\sH$. Assume $\ker D_A\ne\{0\}$ and let
the contractions $A_{n,m}$ be defined by \eqref{hnm} and
\eqref{Anm}. Then the characteristic functions of the operators
\[
A_{n,0},A_{n-1, 1},\ldots, A_{n-m, m},\ldots A_{1,n-1}, A_{0, n}
\]
coincide with the pure part of the $n$-th Schur iterate of the
characteristic function $\Phi(\lambda)$ of $A$. Moreover, each
operator from the set $\{A_{n-k,k}\}_{k=0}^n$ is
\begin{enumerate}
\item  a unilateral shift (co-shift) if and only if the $n$-th Schur
parameter $\Gamma_{n}$ of $\Phi$ is isometric (co-isometric),
\item  the orthogonal sum of  a unilateral shift and
co-shift if and only if
\begin{equation}
\label{SHCOSH} \sD_{\Gamma_{n-1}}\ne \{0\},\;
\sD_{\Gamma^*_{n-1}}\ne \{0\}\quad\mbox{and}\quad
\Gamma_m=0\quad\mbox{for all}\quad m\ge n.
\end{equation}
\end{enumerate}
 Each subspace from the set $\{\sH_{n-k,k}\}_{k=0}^n$ is trivial
  if and only if $\Gamma_n$ is unitary.
\end{theorem}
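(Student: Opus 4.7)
The plan is to establish the coincidence of the characteristic function of $A_{n,0}$ (and hence, by Theorem \ref{RELATT}, of each $A_{n-k,k}$) with the pure part of $\Theta_n$ by induction on $n$, and then derive the remaining assertions as corollaries of this coincidence together with Theorem \ref{const} and the preservation of defect dimensions under coincidence. The base case $n=0$ is immediate: $A_{0,0}=A$ has characteristic function $\Phi=\Theta_0$, and $\Phi(0)=-A\uphar\sD_A$ is already pure.

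For the $n=1$ case for an arbitrary completely non-unitary contraction, I will apply Theorem \ref{IITER} (in its $\zeta_2$ form) to the canonical simple conservative realization
\[
\Sigma=\left\{\begin{bmatrix}-A&D_{A^*}\cr D_A&A^*\end{bmatrix};\sD_A,\sD_{A^*},\sH\right\}
\]
of $\Phi$, whose state space operator is $A^*$. This produces a simple conservative realization of $\Phi_1$ with state space $\ker D_{A^*}=\sH_{0,1}$ and state space operator $(A^*)_{1,0}$; a direct dualization using $\ker D_{(A^*)^n}=\ker D_{A^{*n}}$ identifies this with $(A_{0,1})^*$. Proposition \ref{PURE} then yields that the pure part of $\Phi_1$ coincides with the characteristic function of $((A_{0,1})^*)^*=A_{0,1}$, hence also with that of $A_{1,0}$ by the unitary equivalences in Theorem \ref{RELATT}. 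For the inductive step $n\to n+1$, I apply the just-proved $n=1$ case to the contraction $A_{0,n}$ in place of $A$: its characteristic function equals, by inductive hypothesis, the pure part of $\Theta_n$, and since the first Schur iterate depends only on the pure part of a Schur function, this Schur iterate equals $\Theta_{n+1}$. The $n=1$ case then yields that the pure part of $\Theta_{n+1}$ coincides with the characteristic function of $(A_{0,n})_{0,1}$, which equals $A_{0,n+1}$ by \eqref{UE1}.

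With the coincidence in hand, the remaining statements follow quickly. Since $A_{n-k,k}$ is completely non-unitary (Theorem \ref{RELATT}), it is a unilateral shift iff $\sD_{A_{n-k,k}}=\{0\}$, and by coincidence this is equivalent to $\sD_{\Gamma_n}=\{0\}$, i.e., $\Gamma_n$ isometric; the co-shift case is analogous. By Theorem \ref{const}, $A_{n-k,k}$ is an orthogonal sum of a nontrivial shift and a nontrivial co-shift iff its characteristic function is identically zero on nontrivial defect subspaces, which by coincidence and the Schur recursion $\Theta_n(\lambda)=\Gamma_n+\lambda D_{\Gamma_n^*}\Theta_{n+1}(\lambda)(I+\lambda\Gamma_n^*\Theta_{n+1}(\lambda))^{-1}D_{\Gamma_n}$ unwinds to condition \eqref{SHCOSH}. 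Finally, $\sH_{n-k,k}=\{0\}$ iff $A_{n-k,k}$ acts on the trivial space iff both $\sD_{A_{n-k,k}}$ and $\sD_{A_{n-k,k}^*}$ vanish iff $\sD_{\Gamma_n}=\sD_{\Gamma_n^*}=\{0\}$, i.e., $\Gamma_n$ is unitary. The main technical point I expect to be the obstacle is the adjoint bookkeeping throughout the induction: ensuring that each application of Theorem \ref{IITER} followed by Proposition \ref{PURE} produces the characteristic function of $A_{0,n+1}$ itself rather than of its adjoint, which hinges on the dual identification $(A^*)_{n,0}=(A_{0,n})^*$ and the nested relation $((A^*)_{n,0})_{1,0}=(A^*)_{n+1,0}$ supplied by \eqref{UE1}.
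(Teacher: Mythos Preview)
Your proposal is correct and follows essentially the same route as the paper: induction, with the base step obtained by applying the first-iterate realization (the paper uses Theorem~\ref{Schuriso} directly, you invoke its repackaging Theorem~\ref{IITER}) to the canonical system $\Sigma$ and then reading off the state-space operator via Proposition~\ref{PURE}, and the inductive step via the nesting relation~\eqref{UE1}. The only notable variation is that for assertion~(1) you argue directly from the equality of defect dimensions under coincidence, whereas the paper routes through Theorem~\ref{EXX} and the M\"obius parameter; your argument is shorter and just as valid.
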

\begin{proof} We will prove by induction.
The system
$$\Sigma=\left\{\begin{bmatrix}-A&D_{A^*} \cr  D_{A}&
A^*\end{bmatrix}; \sD_{A},\sD_{A^*}, \sH\right\}$$
 is conservative and simple and its transfer function
$\Phi(\lambda)$ is Sz.-Nagy--Foias characteristic function of $A$.
As in Theorem \ref{EXX}, let
 $F$ and $G^*$ be the embedding  of the subspaces $\sD_{A}$ and
$\sD_{A^*}$ into $\sH$, respectively. Then $D_{F^*}=P_{\ker
D_{A}}=P_{1,0},$ $D_{G}=P_{\ker D_{A^*}}=P_{0,1}$, and
$L=A^*\uphar\ker D_{A^*}\in\bL(\sD_{A^*},\sD_A)$ is unitary
operator. The system
\[
\nu=\left\{\begin{bmatrix}0& P_{\sD_{A^*}}\cr  I_{\sD_A}&A^*P_{\ker
D_{A^*}}\end{bmatrix};\sD_{A},\sD_{A^*}, \sH \right\}
\]
is conservative and simple and its transfer function $Z(\lambda)$ is
the M\"obius  parameter of $\Phi(\lambda)$. Constructing the systems
given by \eqref{newzeta} in Theorem \ref{Schuriso} we get
\[
\zeta_1=\left\{\begin{bmatrix}P_{\sD_{A^*}}\uphar\sD_A&P_{\sD_{A^*}}\uphar\ker
D_A \cr
   A^*P_{\ker D_{A^*}}\uphar\sD_{A}&
A^*P_{\ker D_{A^*}}\uphar\ker D_A
\end{bmatrix};\sD_A,\sD_{A^*}, \ker D_A\right\}
\]
and
\[
\zeta_2=\left\{\begin{bmatrix}P_{\sD_{A^*}}\uphar\sD_A&P_{\sD_{A^*}}A^*\uphar\ker
D_{A^*}\cr
 P_{\ker D_{A^*}}\uphar\sD_{A}
& P_{\ker D_{A^*}}A^*\uphar\ker D_{A^*}
\end{bmatrix};\sD_A,\sD_{A^*}, \ker D_{A^*}\right\}.
\]
By Theorem \ref{Schuriso} the systems $\zeta_1$ and $\zeta_2$ are
conservative and simple and their transfer functions are exactly the
first Schur iterate $\Phi_1(\lambda)$ of $\Phi(\lambda)$.
Note (see \eqref{hnm} and \eqref{Anm}) that
\[
A^*P_{\ker D_{A^*}}\uphar\ker D_A=A^*_{1,0},\;P_{\ker D_{A^*}}A^*\uphar\ker D_{A^*}=A^*_{0,1}.
\]
 Applying Proposition \ref{PURE} we get that the pure part of $\Phi_1(\lambda)$ coincides
with the characteristic functions of the operators $A_{1,0}$ and $ A_{0,1}$.

By Theorem \ref{RELATT} completely non-unitary contractions
$\{A_{n-k,k}\}_{k=0}^n$ are unitarily equivalent. Assume that their
characteristic functions coincide with the pure part of the $n$-th
Schur iterate $\Phi_n(\lambda)$ of $\Phi$. The first Schur iterate of $\Phi_n$ is the function $\Phi_{n+1}(\lambda)$.
 As is already proved above the pure part of $\Phi_{n+1}$ coincides with the characteristic function of the operators
 $(A_{n-k,k})_{1,0}$ and $(A_{n-k,k})_{0,1}$. From \eqref{UE1} it follows
 \[
 (A_{n-k,k})_{1,0}=A_{n+1-k,k},\;(A_{n-k,k})_{0,1}=A_{n-k,k+1}=A_{n+1-(k+1),
k+1}.
 \]
Thus, characteristic functions of the unitarily equivalent completely
non-unitary contractions $\{A_{n+1-k,k}\}_{k=0}^{n+1}$ coincide with $\Phi_{n+1}$.

Note that the M\"obius parameter of the $n-1$-th Schur iterate
$\Phi_{n-1}$ is $\lambda\Phi_n(\lambda)$ and by Theorem \ref{EXX}
this function coincides with the characteristic function of the
operator $\cA_{n,0}=A_{n,0}P_{\ker D_{A_{n,0}}}$. Applying Theorem
\ref{EXX} once again, we get that $A_{n,0}$ is a unilateral shift if
and only if $\Gamma_n$ is a isometry.

The function $\Phi^*(\bar\lambda)$ is the characteristic function of
the operator $A^*$ and its Schur parameters are adjoint to the
corresponding Schur parameters of $\Phi$. In addition if $B=A^*$
then $B_{n,m}=A^*_{m,n}$. Therefore, $A^*_{0, n}$ is a unilateral
shift if and only if $\Gamma^*_n$ is isometric. But $A^*_{0,n}$ is
unuitarily equivalent to $A^*_{n,0}$. Hence, $A_{n,0}$ is a co-shift
if and only if $\Gamma_n$ is a co-isometry.

It follows that $\Gamma_n$ is a unitary if and only if $A_{n,0}$ is
a unilateral shift and co-shift in $\sH_{n,0}$ $\iff$
$\sH_{n,0}=\{0\}$.

Condition \eqref{SHCOSH} holds true if and only if $\Phi_n$ is
identically equal zero. This is equivalent to the condition that
$A_{n,0}$ (as well and $A_{n-1,1},$ $A_{n-2,2},$ $\ldots A_{0,n})$
is the orthogonal sum of a shift and co-shift.
\end{proof}
\begin{remark}
\label{RRREM} It is proved that
\[
\begin{array}{l}
\Gamma_n\;\mbox{is isometry}\; \iff \ker D_{A^{n+1}}=\ker
D_{A^n}\iff\ker D_{A^n}\cap \ker D_{A^*}=\ker D_{A^{n-1}}\cap \ker
D_{A^{*}}\\
\iff \ldots\iff
 \ker D_{A^{n+1-k}}\cap\ker D_{A^{*k}}=\ker D_{A^{n-k}}\cap\ker D_{A^{*k}}\iff\ldots\\
 \iff  \ker D_{A^{*n}}\subset\ker D_{A};
\end{array}
\]
 \[
\begin{array}{l}
\Gamma^*_n \;\mbox{is isometry}\iff \ker D_{A^*}\subset\ker
D_{A^n}\iff \ker D_{A^{n-1}}\cap \ker D_{A^{*2}}=\ker
D_{A^{n-1}}\cap \ker D_{A^*}\\
\iff \ldots\iff
 \ker D_{A^{n-k}}\cap\ker D_{A^{*{k+1}}}=\ker D_{A^{n-k}}\cap\ker D_{A^{*k}}\\
 \iff \ldots\iff \ker D_{A^{{*n+1}}}=\ker
 D_{A^{*n}};
 \end{array}
 \]
\[
\eqref{SHCOSH}\iff \left\{
\begin{array}{l}
\ker D_{A^n}=\left(\bigcap\limits_{l\ge 1}\ker
D_{A^{l}}\right)\oplus\left(\bigcap\limits_{l\ge 1}\ker D_{A^{*l}}\right),\\
P_{\ker D_{A^n}}A\left(\bigcap\limits_{l\ge 1}\ker
D_{A^{*l}}\right)\subset\left(\bigcap\limits_{l\ge 1}\ker
D_{A^{*l}}\right).
\end{array}
\right.
\]
\end{remark}
\subsection{Conservative realizations of the Schur iterates}
\begin{theorem}
\label{ITERATES11} Let $\Theta(\lambda)\in {\bf S}(\sM,\sN)$ and let
\[
\tau_0=\left\{\begin{bmatrix}\Gamma_0&C\cr
B&A\end{bmatrix};\sM,\sN,\sH\right\}
\]
be a simple conservative realization of $\Theta$. Then the Schur
parameters $\{\Gamma_n\}_{n \ge 1}$ of $\Theta$ can be calculated as
follows
\begin{equation}
\label{gamman}
\begin{array}{l}
\Gamma_1=D^{-1}_{\Gamma^*_0}C\left(D^{-1}_{\Gamma_0}B^*\right)^*,�\;
\Gamma_2=D^{-1}_{\Gamma^*_1}D^{-1}_{\Gamma^*_0}CA
\left(D^{-1}_{\Gamma_1}D^{-1}_{\Gamma_0}\left(B^*\uphar\sH_{1,0}\right)\right)^*,\ldots,\\
\Gamma_n=D^{-1}_{\Gamma^*_{n-1}}\cdots
D^{-1}_{\Gamma^*_0}CA^{n-1}\left(D^{-1}_{\Gamma_{n-1}}\cdots
D^{-1}_{\Gamma_0}\left(B^*\uphar\sH_{n-1,0}\right)\right)^*,\ldots.
\end{array}
\end{equation}
Here the operator
\[
\left(D^{-1}_{\Gamma_{n-1}}\cdots
D^{-1}_{\Gamma_0}\left(B^*\uphar\sH_{n-1,0}\right)\right)^*\in\bL(\sD_{\Gamma_{n-1}},\sH_{n-1,0})
\]
is  the adjoint to the operator
\[
D^{-1}_{\Gamma_{n-1}}\cdots
D^{-1}_{\Gamma_0}\left(B^*\uphar\sH_{n-1,0}\right)\in\bL(\sH_{n-1,0},\sD_{\Gamma_{n-1}}),
\]
and
\[\begin{array}{l}
\ran\left(D^{-1}_{\Gamma_{n-1}}\cdots
D^{-1}_{\Gamma_0}\left(B^*\uphar\sH_{n,0}\right) \right)\subset\ran
D_{\Gamma_n},\\
\ran\left(D^{-1}_{\Gamma^*_{n-1}}\cdots
D^{-1}_{\Gamma^*_0}\left(C\uphar\sH_{0,n}\right)\right)\subset\ran
D_{\Gamma^*_n}
\end{array}
\]
for every $n\ge 1$.
 Moreover, for each $n\ge 1$ the unitarily equivalent simple conservative systems
\begin{equation}
\label{taun}
\begin{array}{l}
\tau^{(k)}_{n}=\left\{\begin{bmatrix}\Gamma_n&D^{-1}_{\Gamma^*_{n-1}}
\cdots D^{-1}_{\Gamma^*_{0}}(CA^{n-k})\cr
A^k\left(D^{-1}_{\Gamma_{n-1}}\cdots D^{-1}_{\Gamma_{0}}
\left(B^*\uphar\sH_{n,0}\right)\right)^*&A_{n-k,k}\end{bmatrix};
\sD_{\Gamma_{n-1}},\sD_{\Gamma^*_{n-1}}, \sH_{n-k,k}\right\},\\
k=0,1,\ldots,n \end{array}
\end{equation}
are realizations of the $n$-th Schur iterate $\Theta_n$ of $\Theta$.
 Here the operator
\[
B_n=\left(D^{-1}_{\Gamma_{n-1}}\cdots D^{-1}_{\Gamma_{0}}
\left(B^*\uphar\sH_{n,0}\right)\right)^*\in\bL(\sD_{\Gamma_{n-1}},\sH_{n,0})
\]
is the adjoint to the operator
\[
D^{-1}_{\Gamma_{n-1}}\cdots D^{-1}_{\Gamma_{0}}
\left(B^*\uphar\sH_{n,0}\right)\in\bL(\sH_{n,0},\sD_{\Gamma_{n-1}}).
\]
\end{theorem}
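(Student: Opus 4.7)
The plan is to proceed by induction on $n$. The base case $n=1$ is a direct consequence of Theorem \ref{IITER} combined with Remark \ref{RREM1}: formulas \eqref{n1} identify the systems $\zeta_1$ and $\zeta_2$ produced there as $\tau_1^{(1)}$ and $\tau_1^{(0)}$ of the present theorem, the expression for $\Gamma_1$ agrees with \eqref{gamman} at $n=1$, and the range inclusions stated at the end of Remark \ref{RREM1} are exactly the $n=1$ case of the claimed inclusions.

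For the inductive step, assume the theorem holds for some $n\ge 1$ (with the convention that if $\Gamma_n$ is unitary the induction stops, in accordance with Theorem \ref{ITERATES}). In particular $\tau_n^{(0)}$ is a simple conservative realization of $\Theta_n$ with input space $\sD_{\Gamma_{n-1}}$, output space $\sD_{\Gamma^*_{n-1}}$, state space $\sH_{n,0}$, state operator $A_{n,0}$, control operator
\[
B_n=\bl(D^{-1}_{\Gamma_{n-1}}\cdots D^{-1}_{\Gamma_0}(B^*\uphar\sH_{n,0})\br)^*,
\]
and observation operator $C_n=D^{-1}_{\Gamma^*_{n-1}}\cdots D^{-1}_{\Gamma^*_0}CA^n\uphar\sH_{n,0}$. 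Since $\Theta_{n+1}$ is by definition the first Schur iterate of $\Theta_n$, I apply Theorem \ref{IITER} and Remark \ref{RREM1} to the system $\tau_n^{(0)}$. This immediately yields
\[
\Gamma_{n+1}=D^{-1}_{\Gamma^*_n}C_n\bl(D^{-1}_{\Gamma_n}B_n^*\br)^*,
\]
and unwinding the inductive formulas for $B_n^*$ and $C_n$ recovers \eqref{gamman} for $\Gamma_{n+1}$. Remark \ref{RREM1} applied to $\tau_n^{(0)}$ simultaneously produces two simple conservative realizations of $\Theta_{n+1}$ on the state spaces
\[
(\sH_{n,0})_{1,0}=\sH_{n+1,0},\qquad (\sH_{n,0})_{0,1}=\sH_{n,1}
\]
(by \eqref{AKNM}), with state operators $(A_{n,0})_{1,0}=A_{n+1,0}$ and $(A_{n,0})_{0,1}=A_{n,1}$ (by \eqref{UE1}). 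Because $Af\in\sH_{n-1,1}\subset\sH_{n,0}$ for $f\in\sH_{n+1,0}$ by \eqref{REL}, one has $A_{n,0}f=Af$ on $\sH_{n+1,0}$, and a direct substitution then identifies these two systems as $\tau_{n+1}^{(0)}$ and $\tau_{n+1}^{(1)}$.

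To obtain $\tau_{n+1}^{(k)}$ for $2\le k\le n+1$, I invoke the unitary equivalence in Theorem \ref{RELATT}: by \eqref{REL} the operator $A^k$ is a unitary map from $\sH_{n+1,0}$ onto $\sH_{n+1-k,k}$, and by \eqref{UE} it intertwines $A_{n+1,0}$ with $A_{n+1-k,k}$. Transporting $\tau_{n+1}^{(0)}$ along this unitary produces exactly $\tau_{n+1}^{(k)}$; the replacement of $A^{n+1}$ by $A^{n+1-k}$ in the observation operator comes from the identity $A^{n+1}g=A^{n+1-k}(A^kg)$ for $g\in\sH_{n+1,0}$, while the extra factor $A^k$ in the control operator is the image of $B_{n+1}$ under the transport. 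Simplicity and conservativity of $\tau_{n+1}^{(k)}$ then follow from those of $\tau_{n+1}^{(0)}$. The range inclusions $\ran\bl(D^{-1}_{\Gamma_n}\cdots D^{-1}_{\Gamma_0}(B^*\uphar\sH_{n+1,0})\br)\subset\ran D_{\Gamma_{n+1}}$ and their adjoint counterparts propagate from the inclusions in Remark \ref{RREM1} applied to $\tau_n^{(0)}$, and in turn ensure that the next Moore--Penrose factor $D^{-1}_{\Gamma_{n+1}}$ is defined on the appropriate range. The main obstacle is the bookkeeping in the inductive step: one must verify that each pseudo-inverse composition is well defined (this is exactly the role of the propagating range inclusions) and that restrictions of $B_n^*$ and $C_n$ to the smaller subspaces $\sH_{n+1,0}$ and $\sH_{n,1}$ correctly absorb the extra factors $D^{-1}_{\Gamma_n}$, $D^{-1}_{\Gamma^*_n}$ and an extra power of $A$ to produce the stated formulas. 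Once this is carried out, the result reduces to iterated application of Theorem \ref{IITER} combined with the structural identities of Theorem \ref{RELATT}.
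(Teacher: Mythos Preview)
Your proposal is correct and follows essentially the same approach as the paper's own proof: induction on $n$, with the base case supplied by Theorem \ref{IITER} and Remark \ref{RREM1}, the inductive step obtained by applying Theorem \ref{IITER} to $\tau_n^{(0)}$ together with the identifications $(A_{n,0})_{1,0}=A_{n+1,0}$ from \eqref{AKNM} and \eqref{UE1}, and the remaining systems $\tau_{n+1}^{(k)}$ produced via the unitary equivalences of Theorem \ref{RELATT}. The paper's argument is slightly terser (it extracts only $\tau_{m+1}^{(0)}$ from the inductive application and then appeals once to Theorem \ref{RELATT} for all $k\ge 1$), whereas you also read off $\tau_{n+1}^{(1)}$ directly from Remark \ref{RREM1} before invoking the unitary equivalence for $k\ge 2$; this is a cosmetic difference only.
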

\begin{proof} We will prove by induction.
For $n=1$ it is already established (see Remark \ref{RREM1},
\eqref{RealFirst}, and \eqref{n1}) that
\[
\Gamma_1=D^{-1}_{\Gamma^*_0}C\left(D^{-1}_{\Gamma_0}B^*\right)^*
\]
and the systems
\[
\tau^{(0)}_1=\left\{\begin{bmatrix}\Gamma_1&
D^{-1}_{\Gamma^*_{0}}(CA)\cr \left( D^{-1}_{\Gamma_{0}}
\left(B^*\uphar\sH_{1,0}\right)\right)^*&A_{1,0}\end{bmatrix};
\sD_{\Gamma_0},\sD_{\Gamma^*_{0}}, \sH_{1,0}\right\}
\]
and
\[
\tau^{(1)}_1=\left\{\begin{bmatrix}\Gamma_1&
D^{-1}_{\Gamma^*_{0}}(C)\cr A\left( D^{-1}_{\Gamma_{0}}
\left(B^*\uphar\sH_{1,0}\right)\right)^*&A_{0,1}\end{bmatrix};
\sD_{\Gamma_0},\sD_{\Gamma^*_{0}}, \sH_{0,1}\right\}
\]
are conservative and simple realizations of $\Theta_1.$ Suppose
\[
\tau^{(0)}_m=\left\{\begin{bmatrix}\Gamma_m&D^{-1}_{\Gamma^*_{m-1}}
\cdots D^{-1}_{\Gamma^*_{0}}(CA^m)\cr
\left(D^{-1}_{\Gamma_{m-1}}\cdots D^{-1}_{\Gamma_{0}}
\left(B^*\uphar\sH_{m,0}\right)\right)^*&A_{m,0}\end{bmatrix};
\sD_{\Gamma_{m-1}},\sD_{\Gamma^*_{m-1}}, \sH_{m,0}\right\}
\]
is a simple conservative realization of $\Theta_m$. Then
\[
\begin{array}{l}
B_m=\left(D^{-1}_{\Gamma_{m-1}}\cdots D^{-1}_{\Gamma_{0}}
\left(B^*\uphar\sH_{m,0}\right)\right)^*\in\bL(\sD_{\Gamma_{m-1}},\sH_{m,0}),\\
C_m=D^{-1}_{\Gamma^*_{m-1}} \cdots
D^{-1}_{\Gamma^*_{0}}(CA^m)\in\bL(\sH_{m,0},\sD_{\Gamma^*_{m-1}}),\;
A_{m,0}\in\bL(\sH_{m,0},\sH_{m,0}).
\end{array}
\]
Hence
\[
B^*_m=D^{-1}_{\Gamma_{m-1}}\cdots D^{-1}_{\Gamma_{0}}
\left(B^*\uphar\sH_{m,0}\right)\in\bL(\sH_{m,0},\sD_{\Gamma_{m-1}}).
\]
The first Schur iterate of $\Theta_m(\lambda)$ is the function
$\Theta_{m+1}(\lambda)\in{\bf S}(\sD_{\Gamma_m},\sD_{\Gamma^*_m})$
and the first Schur parameter of $\Theta_m$ is $\Gamma_{m+1}.$ From
\eqref{AKNM} and \eqref{UE1} it follows that
\[
\ker D_{A_{m,0}}=\sH_{m+1,0},\;
(A_{m,0})_{1,0}=A_{m+1,0}\in\bL(\sH_{m+1,0},\sH_{m+1,0}).
\]
Hence by \eqref{RealFirst}, and \eqref{n1}
\[
\Gamma_{m+1}=D^{-1}_{\Gamma^*_{m}}C_m\left(D^{-1}_{\Gamma_{m}}B^*_m\right)^*=
D^{-1}_{\Gamma^*_{m}}\cdots
D^{-1}_{\Gamma^*_0}CA^m\left(D^{-1}_{\Gamma_{m}}\cdots
D^{-1}_{\Gamma_0}\left(B^*\uphar\sH_{m,0}\right)\right)^*
\]
and the system
\[
\tau^{(0)}_{m+1}=\left\{\begin{bmatrix}\Gamma_{m+1}&D^{-1}_{\Gamma^*_{m}}
\cdots D^{-1}_{\Gamma^*_{0}}(CA^{m+1})\cr
\left(D^{-1}_{\Gamma_{m}}\cdots D^{-1}_{\Gamma_{0}}
\left(B^*\uphar\sH_{m+1,0}\right)\right)^*&A_{m+1,0}\end{bmatrix};
\sD_{\Gamma_{m}},\sD_{\Gamma^*_{m}}, \sH_{m+1,0}\right\}
\]
is a simple conservative realization of $\Theta_{m+1}$. From
Proposition \ref{RELAT} it follows that the system
\[
\tau^{(k)}_{m+1}=\left\{\begin{bmatrix}\Gamma_m&D^{-1}_{\Gamma^*_{m}}
\cdots D^{-1}_{\Gamma^*_{0}}(CA^{m+1-k})\cr
A^k\left(D^{-1}_{\Gamma_{m}}\cdots D^{-1}_{\Gamma_{0}}
\left(B^*\uphar\sH_{m+1,0}\right)\right)^*&A_{m+1-k,k}\end{bmatrix};
\sD_{\Gamma_{m}},\sD_{\Gamma^*_{m}}, \sH_{m+1-k,k}\right\}
\]
is unitarily equivalent to the system $\tau^{(0)}_{m+1}$ for
$k=1,\ldots, m+1$ and hence have transfer functions equal to
$\Theta_{m+1}$. This completes the proof.
\end{proof}

Let us make a few remarks which follow from \eqref{exptran},
Proposition \ref{PURE}, and Theorem \ref{ITERATES}.

 If $D_{\Gamma_N}=0$ and
$D_{\Gamma^*_N}\ne 0$ then $\sD_{\Gamma_n}=0$,
$\Gamma^*_n=0\in\bL(\sD_{\Gamma^*_N},\{0\})$,
$\sD_{\Gamma^*_n}=\sD_{\Gamma^*_N}$, and $\sH_{0,n}=\sH_{0,N}$ for
$n\ge N$. The unitarily equivalent observable conservative systems
\[
\tau^{(k)}_N=\left\{\begin{bmatrix}\Gamma_N&D^{-1}_{\Gamma^*_{N-1}}
\cdots D^{-1}_{\Gamma^*_{0}}(CA^{N-k})\cr 0&A_{N-k,k}\end{bmatrix};
\sD_{\Gamma_{N-1}},\sD_{\Gamma^*_{N-1}}, \sH_{N-k,k}\right\},\;
k=0,1,\ldots,N
\]
have transfer functions $\Theta_N(\lambda)=\Gamma_N$ and the
operators $A_{N-k,k}$ are unitarily equivalent co-shifts of
multiplicity $\dim\sD_{\Gamma^*_N},$ the Schur iterates $\Theta_n$
are null operators from $\bL(\{0\},\sD_{\Gamma^*_N})$ for $n\ge N+1$
and are transfer functions of the conservative observable system
\[
\tau_{N+1}=\left\{\begin{bmatrix}0&D^{-1}_{\Gamma^*_{N-1}} \cdots
D^{-1}_{\Gamma^*_{0}}C\cr 0&A_{0,N}\end{bmatrix};
\{0\},\sD_{\Gamma^*_{N}}, \sH_{0,N}\right\}.
\]
 If $D_{\Gamma^*_N}=0$ and $D_{\Gamma_N}\ne 0$ then
$\sD_{\Gamma^*_n}=0$, $\sD_{\Gamma_n}=\sD_{\Gamma_N}$, and
$\Gamma_n=0\in\bL(\sD_{\Gamma_N},\{0\})$, $\sH_{n,0}=\sH_{N,0}$ for
$n\ge N$. The unitarily equivalent controllable conservative systems
\[
\tau^{(k)}_N=\left\{\begin{bmatrix}\Gamma_N&0\cr
A^k\left(D^{-1}_{\Gamma_{N-1}}\cdots D^{-1}_{\Gamma_{0}}
\left(B^*\uphar\sH_{N,0}\right)\right)^*&A_{N-k,k}\end{bmatrix};
\sD_{\Gamma_{N-1}},\sD_{\Gamma^*_{N-1}}, \sH_{N-k,k}\right\}
\]
have transfer functions $\Theta_N(\lambda)=\Gamma_N$ and the
operators $A_{N-k,k}$ are unitarily equivalent unilateral shifts of
multiplicity $\dim\sD_{\Gamma_N},$ the Schur iterates $\Theta_n$ are
null operators from $\bL(\sD_{\Gamma_N},\{0\})$ for $n\ge N+1$ and
are transfer functions of the conservative controllable system
\[
\tau_{N+1}=\left\{\begin{bmatrix}0&0\cr
\left(D^{-1}_{\Gamma_{N}}\cdots D^{-1}_{\Gamma_{0}}
\left(B^*\uphar\sH_{N+1,0}\right)\right)^* &A_{N,0}\end{bmatrix};
\sD_{\Gamma_{N}},\{0\}, \sH_{N,0}\right\}.
\]

\end{document}